\newcommand{\plim}{\varprojlim}
\newcommand{\mcal}{\mathcal}
\newcommand{\mbf}{\mathbf}
\newcommand{\mfrak}{\mathfrak}
\newcommand{\mbb}{\mathbb}
\newcommand{\mrm}{\mathrm}
\newcommand{\vphi}{\varphi}
\newcommand{\cO}{\mathcal{O}}
\newtheorem{theorem}{Theorem}[section]
\newtheorem{corollary}[theorem]{Corollary}
\newtheorem{lemma}[theorem]{Lemma}
\newtheorem{proposition}[theorem]{Proposition}
\theoremstyle{definition}
\newtheorem{definition}[theorem]{Definition}
\newtheorem{remark}[theorem]{Remark}
\newtheorem{example}[theorem]{Example}
\newtheorem*{acknowledgments}{Acknowledgments}
\title{Torsion of algebraic groups and 
iterate extensions associated with 
Lubin-Tate formal groups} 
\author{Yoshiyasu Ozeki\footnote{
Department of Mathematics and Physics, Faculty of Science, Kanagawa University,
  2946 Tsuchiya, Hiratsuka-shi, Kanagawa 259--1293, JAPAN
\endgraf
e-mail: {\tt ozeki@kanagawa-u.ac.jp}}
}
\begin{document}
\maketitle

\begin{abstract}
We show finiteness results on torsion points of commutative algebraic groups 
over a $p$-adic field $K$ with values in various algebraic extensions $L/K$ of infinite degree.
We mainly study the following cases: (1) $L$ is an abelian extension which is a splitting field 
of a crystalline character (such as a Lubin-Tate extension). (2) $L$ is a certain iterate extension of $K$ 
associated with Lubin-Tate formal groups, which is familiar with Kummer theory.
\end{abstract}

\setcounter{tocdepth}{1}
      \tableofcontents


\section{Introduction}
Let $p$ be a prime number. 
It is known (cf. \cite{Mat}, \cite[Theorem 5.2 (a)]{CL}) that, for any abelian variety $A$ over a $p$-adic field $K$
and any finite extension $L$ of $K$, we have 
$$
A(L)\simeq \mbb{Z}_p^{\oplus [L:\mbb{Q}_p]\cdot \mrm{dim}\ A}\oplus \mrm{(a\ finite\ group)}.
$$
Thus we may say that the free part of the Mordell-Weil group $A(L)$ is well-understood 
(in contrast to the number field case). 
Furthermore, some explicit  bounds on the  size of the torsion part $A(L)_{\mrm{tor}}$ of $A(L)$
is also studied  under certain reduction hypothesis on $A$ (cf.\ \cite{Si1}, \cite{Si2} and \cite{CX}).
We are interested in the finiteness of  $A(L)_{\mrm{tor}}$ 
in the case where $L$ is an algebraic extension of $K$ {\it of infinite degree}. 
Motivated by the Mazur's question \cite{Maz} for the Mordell-Weil group over a cyclotomic field,
Imai showed in \cite{Im} that the torsion subgroup of $A(K(\mu_{p^{\infty}}))$ is 
finite if $A$ has potential good reduction,
which is  well-known  as a powerful tool in Iwasawa theory.
Here, $\mu_{p^{\infty}}$ is the set of $p$-power roots of unity.
Some  ``generalizations'' of Imai's theorem are also known. 
One of generalization is given by Kubo and Taguchi \cite{KT};  
they showed that Imai's theorem still holds 
if we replace $K(\mu_{p^{\infty}})$ with $K(K^{1/p^{\infty}})$,
where $K^{1/p^{\infty}}$ is 
the set of $p$-power roots of all elements of $K$.
Another generalization is given in \cite{Oz},
which shows that Imai's theorem still holds if we replace 
$K(\mu_{p^{\infty}})$ with the composite field of $K$ and 
various Lubin-Tate extensions over $p$-adic fields. 

In this paper, we give further discussions of \cite{Oz} and  
consider a ``Lubin-Tate theoretic'' generalization of \cite{KT}. 
We need some notation to state our main theorems.
Let $k$ be a $p$-adic subfield of $K$ with residue field $\mbb{F}_q$,
$\cO_k$ the integer ring of $k$ 
and take a uniformizer $\pi$ of $k$.
We take any $\phi=\phi(X)\in \cO_k[\![X]\!]$ with the property that 
$\phi(X)\equiv X^q\ \mrm{mod}\ \pi$ 
and  
$\phi(X)\equiv \pi X \ \mrm{mod}\ X^2$.
We set
$$
\widetilde{K}:=K(x\in \mbf{m}_{\overline{K}} \mid  
\phi^n(x)\in K\ \mbox{for some} \ n\ge 1).
$$
Here, $\overline{K}$ is a (fixed) algebraic closure of $K$,
$\mbf{m}_{\overline{K}}$ is its maximal ideal and 
$\phi^n$ is the $n$-th composite of $\phi$.
The field $\widetilde{K}$ has a geometric interpretation as follows: 
Denote by $\mbf{m}_K$ the maximal ideal of
the integer ring $\cO_K$ of $K$.   
Let $F$ be the Lubin-Tate formal group over $\cO_k$ 
associated with $\pi$.
Then $\widetilde{K}$ is the extension field of $K$ obtained by adjoining 
to $K$ all $\pi^n$-th roots of all elements of $F(\mbf{m}_K)$ for all $n$. 
If $k=\mbb{Q}_p$ and $\pi=p>2$, then we have
$\widetilde{K}=K((\cO^{\times}_K)^{1/p^{\infty}})$ where 
$(\cO^{\times}_K)^{1/p^{\infty}}$ is 
the set of $p$-power roots of all units of $\cO_K^{\times}$
(cf.\ Example \ref{basicex}). 
Thus, in some sense, the field $\tilde{K}$ is ``closely related'' to the field studied by Kubo and Taguchi.
The field $\widetilde{K}$ satisfies the following interesting properties (see Section \ref{Mainsection}).
\begin{itemize}
\item[-] $\widetilde{K}$ is independent of the choice of $\phi$;
it depends only on the choice of $K,k$ and $\pi$.
\item[-] $\widetilde{K}$ has a finite residue field and contains $k_{\pi}$.
Here, 
$k_{\pi}/k$ is the Lubin-Tate extension associated with $\pi$. 
\item[-] $\widetilde{K}$ is a non-abelian $p$-adic Lie extension of $K$ 
and the extension $\widetilde{K}/Kk_{\pi}$ is a $\mbb{Z}_p^{\oplus [K:\mbb{Q}_p]}$-extension.
The maximal abelian extension of $K$ contained in $\widetilde{K}$ is a finite extension of $Kk_{\pi}$.
\end{itemize}
For the study of the Galois group $\mrm{Gal}(\widetilde{K}/Kk_{\pi})$, 
we follow the Kummer theory arguments of 
Ribet \cite{Ri1}, \cite{Ri2} and Banaszak-Gajda-Krason \cite{BGK}.

Our interest is to study the finiteness of torsion points of abelian varieties,
more generally, commutative algebraic groups,
with values in (a finite extension of) $\widetilde{K}$.
Now, first main result is as follows. 

\begin{theorem}
\label{MMT}
Assume that the pair $(k,\pi)$ does not satisfy  the condition {\rm (W)} explained just below.
Then, 
for any finite extension $L$ of $\widetilde{K}$ and  
any abelian variety $A$ over $L$ with potential good reduction,
the torsion subgroup $A(L)_{\mrm{tor}}$ of $A(L)$ is finite.
\end{theorem}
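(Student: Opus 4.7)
The plan is to reduce the statement to a Hodge--Tate theoretic constraint on subrepresentations of $V_p A$ that are trivialized by $G_{\widetilde{K}}$, and then exploit the structure of $\widetilde{K}/K$ described in the bullets of the excerpt to exclude such subrepresentations whenever condition (W) fails. The reductions are standard: since $A(L)_{\mrm{tor}} \hookrightarrow A(L')_{\mrm{tor}}$ for any finite $L'/L$, and since enlarging $K$ to a finite extension $K'$ only enlarges $\widetilde{K}$ by a finite amount (by independence of $\widetilde{K}$ from the choice of $\phi$, applied to both $K$ and $K'$), I may assume $A$ is defined over $K$ with good reduction and reduce the problem to finiteness of $A(\widetilde{K})_{\mrm{tor}}$. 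The prime-to-$p$ torsion is immediate: the second bullet says $\widetilde{K}$ has finite residue field, and under good reduction the reduction map is injective on $\ell$-primary torsion for $\ell \ne p$, landing in the finite set of special-fibre points over that residue field.

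For the $p$-primary part, set $V := V_p A$, a crystalline representation of $G_K$ of dimension $2\dim A$ with Hodge--Tate weights in $\{0,1\}$. Finiteness of $A(\widetilde{K})[p^{\infty}]$ is equivalent to $V^{G_{\widetilde{K}}} = 0$, so I would argue by contradiction: suppose there is an irreducible $G_K$-subquotient $W$ of $V$ on which $G_{\widetilde{K}}$ acts trivially, so $W$ factors through $\mrm{Gal}(\widetilde{K}/K)$ and its Hodge--Tate weights still lie in $\{0,1\}$. I would then appeal to the third bullet: the maximal abelian extension of $K$ in $\widetilde{K}$ is a finite extension of $Kk_{\pi}$, so up to finite order twists the continuous characters of the abelianization of $\mrm{Gal}(\widetilde{K}/K)$ are integer powers of the Lubin--Tate character $\chi_{\pi}$. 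Together with the Kummer-theoretic description of $\mrm{Gal}(\widetilde{K}/Kk_{\pi}) \simeq \mbb{Z}_p^{\oplus [K:\mbb{Q}_p]}$ (following the Ribet and Banaszak--Gajda--Krason style arguments indicated in the excerpt), this constrains all possible $W$ through a single numerical datum attached to $(k,\pi)$. Matching the Hodge--Tate weights of such $W$ against the allowed set $\{0,1\}$ produces an arithmetic incompatibility, which is exactly the negation of condition (W); hence when (W) fails, no such $W$ exists, giving the contradiction.

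The main obstacle I expect is the ``Kummer ascent'' from $Kk_{\pi}$ to $\widetilde{K}$. The Lubin--Tate piece $Kk_{\pi}/K$ is handled by \cite{Oz}, but crossing the $\mbb{Z}_p^{\oplus [K:\mbb{Q}_p]}$-extension $\widetilde{K}/Kk_{\pi}$ requires a genuine Kummer-theory-for-formal-groups computation: one must show that every irreducible subquotient $W$ above is controlled by characters of the Lubin--Tate tower (so that the Hodge--Tate weight analysis actually applies), and then identify condition (W) as the sharp numerical boundary rather than a looser family of exceptional cases. This bookkeeping, together with the subtlety that the Kummer pairing here lives in a formal group rather than in $\mbb{G}_m$, is where I expect the detailed work to concentrate.
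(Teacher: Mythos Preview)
Your overall architecture is right: descend $A$ to a $p$-adic field, kill the prime-to-$p$ torsion using the finite residue field of $\widetilde{K}$, and then analyze $V_p(A)^{G_{\widetilde{K}}}$ to reduce to the Lubin--Tate tower $Kk_{\pi}/K$, where the result of \cite{Oz} (equivalently Theorem~\ref{MTab} for $\chi=\chi_\pi$) applies. Where you diverge from the paper is precisely the step you flag as the ``main obstacle''.

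You expect the passage from $\widetilde{K}$ down to $Kk_{\pi}$ to require a formal-group Kummer computation together with the description of $\mrm{Gal}(\widetilde{K}/K)^{\mrm{ab}}$. The paper does \emph{not} argue this way. The Kummer theory (Lemmas~\ref{structure2-1}--\ref{structure2-2}) is used only to prove the structural bullets about $\widetilde{K}$; it plays no role in the proof of Theorem~\ref{MMT}. Instead, the reduction hinges on an elementary eigenvalue argument: the conjugation identity $\sigma\tau\sigma^{-1}=\tau^{\chi_\pi(\sigma)}$ for $\tau\in H=\mrm{Gal}(\widetilde{K}/Kk_\pi)$ and $\chi_\pi(\sigma)\in\mbb{Z}$ (Lemma~\ref{Hlem}) forces the eigenvalues of $\rho(\tau)$ on any continuous representation to be roots of unity of bounded order, so a finite-index subgroup $H'\subset H$ acts unipotently (Proposition~\ref{unipotent}). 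Applied to $\widetilde{V}=V_p(A)^{G_{\widetilde{K}}}$, this gives $\widetilde{V}^{H'}=V_p(A)^{G_{K'k_\pi}}\ne 0$ for some finite $K'/K$, and one is immediately in the abelian setting of Theorem~\ref{MTab}.

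Your proposed route via $\mrm{Gal}(\widetilde{K}/K)^{\mrm{ab}}$ has a real gap as stated: knowing that one-dimensional characters of $G=\mrm{Gal}(\widetilde{K}/K)$ are finite twists of powers of $\chi_\pi$ says nothing a priori about higher-dimensional irreducible $W$, since characters of $H$ appearing in $W|_H$ need not extend to $G$. To constrain them you would need the conjugation action of $G/H$ on $H$ anyway (via Clifford theory), which brings you back to Lemma~\ref{Hlem}. So the unipotence argument is both the simpler and the necessary ingredient; the Kummer-theoretic structure of $H$ is irrelevant for this theorem. A minor side remark: your claim that passing from $K$ to a finite $K'$ ``only enlarges $\widetilde{K}$ by a finite amount'' is false (the $\mbb{Z}_p$-rank jumps), but the reduction still works because one only needs $L\subset\widetilde{K'}$, which holds once $K'$ is chosen with $L\subset K'\widetilde{K}$.
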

The condition (W) appeared in the theorem is as follows.
Let  $k_G$ be the Galois closure of $k/\mbb{Q}_p$ and set
 $d_G:=[k_G:\mbb{Q}_p]$.
Fix an embedding $\overline{\mbb{Q}}\hookrightarrow \overline{\mbb{Q}}_p$.
We say that $\alpha$ is a {\it $q$-Weil integer of weight $w$} 
(resp.\ a {\it $q$-Weil number of weight $w$} )
if $\alpha$ is an  algebraic integer (resp.\ an algebraic number) 
such that 
$|\iota(\alpha)|=q^{w/2}$ for any embedding $\iota\colon \overline{\mbb{Q}} \hookrightarrow \mbb{C}$.
Then,  the condition (W) is;
\begin{itemize}
\item[{\rm (W)}] $\mrm{Nr}_{k/\mbb{Q}_p}(\pi)$
is a $q$-Weil integer of weight  $d_G/c$
for  some integer $c$ with $1\le c\le d_G$.
\end{itemize}
Theorem \ref{MMT} is a generalization and also a slight refinement  of the main theorem of \cite{Oz}; 
it shows that, if (W) with a bit stronger assumption on the weight does not hold, then  
the torsion subgroup of $A(L)$ is 
finite for any finite extension $L$ of $k_{\pi}$ and 
any abelian variety $A$ over  $L$ with potential good reduction. 
Note that Imai's theorem can be recovered by
applying the main theorem of \cite{Oz} (or Theorem \ref{MMT} above) with  $k=\mbb{Q}_p$ and $\pi=p$. 
We remark that our results  should give applications to  Iwasawa theory, 
for example, control theorems of Selmer groups for abelian varieties 
over certain $p$-adic Lie  extensions of number fields. 
In fact, arguments of  \cite[Section 6]{KT} seem to be familiar with our results. 

We immediately see that  the condition (W) is not enough 
if we hope to remove the reduction hypothesis from Theorem \ref{MMT}
for a finiteness property of torsion points.
(In fact, consider the case where $A$ is an elliptic curve with split multiplicative reduction and 
$(k,\pi)=(\mbb{Q}_p,p)$.
Then the pair $(\mbb{Q}_p,p)$ does not satisfy (W) but $A(\widetilde{K})_{\mrm{tor}}$ is infinite since  
 $\tilde{K}$ contains $k_{\pi}=\mbb{Q}_p(\mu_{p^{\infty}})$.) 
To overcome this, we consider the following additional condition.   
\begin{itemize}
\item[{\rm ($\mu$)}] $q^{-1}\mrm{Nr}_{k/\mbb{Q}_p}(\pi)$ is a root of unity.
\end{itemize}
Our second result below can be applied to not only abelian varieties with potential 
good reduction but also all commutative algebraic groups
(here, an algebraic group over a field $F$ is a group scheme of finite type over $F$).
\begin{theorem}
\label{MT}
Assume that the pair $(k,\pi)$ satisfies neither  {\rm (W)} nor  {\rm ($\mu$)}.
Then, for any finite extension $L$ of $\widetilde{K}$
and any commutative algebraic group $G$ over $L$,
the torsion subgroup $G(L)_{\mrm{tor}}$ of $G(L)$ is finite.
\end{theorem}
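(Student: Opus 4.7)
The plan is to reduce Theorem \ref{MT} to two inputs already under control: finiteness of torsion for tori (requiring failure of ($\mu$)) and for abelian varieties with potential good reduction (Theorem \ref{MMT}, requiring failure of (W)). I proceed in three structural steps, then identify the main obstacle.

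Since $G/G^0$ is a finite \'etale $L$-group scheme, $(G/G^0)(L)$ is finite, so I may assume $G$ is connected. The field $L$ is perfect of characteristic zero, so Chevalley's structure theorem gives an exact sequence $0\to H\to G\to B\to 0$ with $H$ a connected commutative affine $L$-group and $B$ an abelian variety over $L$; moreover $H$ fits in $0\to U\to H\to T\to 0$ with $U$ unipotent and $T$ a torus. As $\mrm{char}(L)=0$, $U(L)\cong L^{\dim U}$ is torsion-free, so the snake lemma applied to multiplication by $n$ on both sequences shows that finiteness of $G(L)_{\mrm{tor}}$ reduces to finiteness of $T(L)_{\mrm{tor}}$ and $B(L)_{\mrm{tor}}$.

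For the torus, choose a finite Galois extension $L'/L$ that splits $T$; then $T(L)_{\mrm{tor}}\subset T(L')_{\mrm{tor}}\cong \mu(L')^{\dim T}$. The prime-to-$p$ part of $\mu(L')$ is finite because $L'$ inherits a finite residue field from $\widetilde{K}$. The $p$-primary part $\mu_{p^\infty}(L')$ is finite if and only if the cyclotomic character $\chi_{\mrm{cyc}}$ has open image on $G_{L'}$, equivalently on $G_{\widetilde{K}}$. Because the maximal abelian subextension of $\widetilde{K}/K$ is finite over $Kk_\pi$, this openness reduces (using finiteness of $K/k$ as well) to showing that $\chi_{\mrm{cyc}}(G_{k_\pi})$ is open in $\mbb{Z}_p^\times$. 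Under local reciprocity on $G_k^{\mrm{ab}}$, the Lubin-Tate character $\chi_\pi$ corresponds to the character $a\mapsto (a\pi^{-v_k(a)})^{-1}$ of $k^\times$, while $\chi_{\mrm{cyc}}|_{G_k}$ corresponds to $a\mapsto \mrm{Nr}_{k/\mbb{Q}_p}(a)_{(p)}^{-1}$, where $(p)$ denotes the unit part. A direct comparison of kernels then shows that $\chi_{\mrm{cyc}}(G_{k_\pi})$ is finite exactly when $q^{-1}\mrm{Nr}_{k/\mbb{Q}_p}(\pi)\in\mu_\infty(\mbb{Q}_p)$, i.e., when ($\mu$) holds. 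Failure of ($\mu$) thus completes the torus step.

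For the abelian variety $B$, after a harmless finite extension of $L$ I may assume $B$ has semistable reduction by Grothendieck's theorem. Raynaud's non-archimedean uniformization supplies a semiabelian variety $\tilde B$ over $L$ in an extension $0\to T'\to\tilde B\to B'\to 0$ with $T'$ a torus and $B'$ an abelian variety of good reduction, together with a discrete $G_L$-invariant lattice $M\subset\tilde B(L)$ of rank $\dim T'$ such that $B(\bar L)=\tilde B(\bar L)/M$ rigid-analytically. The $p$-adic Tate module then fits into $G_L$-equivariant short exact sequences
$$0\to T_pT'\to T_p\tilde B\to T_pB'\to 0\quad\mrm{and}\quad 0\to T_p\tilde B\to T_pB\to M\otimes\mbb{Z}_p\to 0,$$
where $T_pT'$ is purely cyclotomic (so $(V_pT')^{G_L}=0$ by the torus step), $T_pB'$ has $(V_pB')^{G_L}=0$ by Theorem \ref{MMT} applied to $B'$, and $M\otimes\mbb{Z}_p$ carries the trivial $G_L$-action. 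Combining these vanishings with the non-degeneracy of the Raynaud monodromy pairing on $T'\times M$ (a standard consequence of $B$ being an abelian variety) shows that the connecting map $M\otimes\mbb{Q}_p\to H^1(G_L,V_p\tilde B)$ is injective, hence $(V_pB)^{G_L}=0$, and therefore $B(L)[p^\infty]$ is finite. The prime-to-$p$ torsion is handled uniformly via finiteness of the residue field and Weil weight estimates on $T_\ell B$ for $\ell\neq p$.

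The main obstacle, in my view, is the class-field-theoretic identification in the torus step: pinning down ($\mu$) as the exact condition for $\chi_{\mrm{cyc}}(G_{\widetilde{K}})$ to be open in $\mbb{Z}_p^\times$. Once this is settled, the Chevalley--Raynaud structural reductions combined with Theorem \ref{MMT} deliver the proof.
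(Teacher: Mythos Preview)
Your overall architecture matches the paper's: reduce via Chevalley to tori and abelian varieties, handle tori through failure of $(\mu)$, and handle abelian varieties via rigid uniformization plus Theorem~\ref{MMT}. The paper packages this reduction as Proposition~\ref{key} and Corollary~\ref{keycor}, so that the proof of Theorem~\ref{MT} becomes a short verification of the hypotheses $(\mu_\infty)$ and $(\mrm{AV}_\infty)$. Your torus step, including the class-field-theoretic identification of $(\mu)$ with finiteness of $\chi_{\mrm{cyc}}(G_{k_\pi})$, is correct and close to the paper's argument.

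There is, however, a genuine gap in your abelian-variety step. You assert that non-degeneracy of the monodromy pairing forces the connecting map $M\otimes\mbb{Q}_p\to H^1(G_L,V_p\tilde B)$ to be injective, but this does not follow. The monodromy pairing is a statement about the weight filtration of $V_pB$ as a representation of $G_{K_0}$ over the $p$-adic field $K_0$ to which $B$ descends (equivalently, that $N\colon\mrm{gr}_0^W\to\mrm{gr}_{-2}^W(-1)$ is an isomorphism); it says nothing about cohomology over the infinite extension $L$. Already for a Tate curve $E_q$, injectivity of your connecting map is the assertion that the Kummer class of $q$ is non-torsion in $\varprojlim_n L^\times/(L^\times)^{p^n}$, i.e.\ that $L$ does not contain all $p$-power roots of $q$; the monodromy pairing, which only records $v_{K_0}(q)>0$, cannot see this, since the value group of $L$ is far from discrete.

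The paper's substitute is the following. Via Weil restriction (Corollary~\ref{keycor}) one arranges that the algebraic group is defined over $K$, with $\widetilde{K}/K$ Galois. Then $(V_pB)^{G_L}$ is a $G_K$-module, the uniformization exact sequences are $G_K$-equivariant, and once $(V_p\tilde B)^{G_L}=0$ one obtains a $G_K$-equivariant injection $(V_pB)^{G_L}\hookrightarrow M\otimes\mbb{Q}_p$. Since $G_K$ acts on $M$ through a finite quotient, $(V_pB)^{G_L}=(V_pB)^{G_{K'}}$ for some $p$-adic subfield $K'\subset L$, and this vanishes by Mattuck's theorem. The same mechanism with mod-$\ell$ coefficients gives $B(L)[\ell]=0$ for almost all $\ell$, which replaces your unexplained ``Weil weight estimates.'' To repair your proof, drop the monodromy-pairing claim and insert this Galois-descent-plus-Mattuck argument; note that it requires the base field of the uniformization to sit below $L$ in a Galois tower, which is precisely what the Weil restriction buys.
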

We show this theorem by combining Theorem \ref{MMT}, 
a structure theorem of commutative algebraic groups and 
 a non-archimedian  rigid uniformization theorem of abelian varieties
(\cite{Ra}, \cite{BL}  and \cite{BX}).

Furthermore, we show that, for given $p$-adic fields $k\subset K$, 
there are only finitely many possibilities 
of the absolute norm $\mrm{Nr}_{k/\mbb{Q}_p}(\pi)$ 
which might admit infiniteness of 
$G(\widetilde{K})_{\mrm{tor}}$ for some algebraic group $G$ over $K$. 
Moreover, we show ``uniform'' version of this phenomenon.
We denote by $f_K$ 
the extension degree of the residue field extension of $K/\mbb{Q}_p$.

\begin{theorem}
\label{MTF}
Let $f,g>0$ be positive integers.
There exists a finite set $\mcal{W}=\mcal{W}(f,g;k)$ of $q$-Weil integers 
depending only on $f,g$ and $k$ which satisfies the following property:
If $\mrm{Nr}_{k/\mbb{Q}_p}(\pi)\notin \mcal{W}$,
then  for any finite extension $K/k$ with $f_K\le f$, 
any commutative algebraic group $G$ over $K$ of dimension at most $g$
and any finite extension $L/\widetilde{K}$, it holds that 
$G(L)_{\mrm{tor}}$ is finite.
\end{theorem}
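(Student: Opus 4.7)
The plan is to combine Theorem \ref{MT} with a Northcott--Honda--Tate finiteness input. By Theorem \ref{MT}, whenever the pair $(k,\pi)$ satisfies neither {\rm (W)} nor {\rm ($\mu$)}, the desired finiteness of $G(L)_{\mrm{tor}}$ already holds for every admissible $K$, $G$, $L$. It therefore suffices to exhibit a finite set $\mcal{W}(f,g;k)$ of $q$-Weil integers which contains every $\mrm{Nr}_{k/\mbb{Q}_p}(\pi)$ that might cause $G(L)_{\mrm{tor}}$ to be infinite for some admissible $G/K$ with $f_K\le f$ and $\dim G\le g$.

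The contribution from condition $(\mu)$ is immediate. The torsion subgroup of $\mbb{Z}_p^{\times}$ is the finite group $\mu_{p-1}$ (or $\{\pm 1\}$ when $p=2$), so the condition $q^{-1}\mrm{Nr}_{k/\mbb{Q}_p}(\pi)\in\mu(\mbb{Z}_p)$ already restricts $\mrm{Nr}_{k/\mbb{Q}_p}(\pi)$ to the finite set $q\cdot\mu(\mbb{Z}_p)$. This accounts for a finite subset of $\mcal{W}$ that is independent of $f$ and $g$.

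The interesting contribution comes from (W), and here the bounds $f_K\le f$ and $\dim G\le g$ must enter. I would retrace the proofs of Theorems \ref{MMT} and \ref{MT}, together with the structure theorem for commutative algebraic groups and the rigid uniformization of abelian varieties used there, in order to extract from any hypothetical failure of finiteness an algebraic relation of the shape $\mrm{Nr}_{k/\mbb{Q}_p}(\pi)^{c}=\beta$ with some $1\le c\le d_G$, where $\beta$ is a product of Frobenius eigenvalues of an abelian variety $B$ of dimension at most $g$ defined over an extension of $K$ with residue field of cardinality at most $p^{f}$. Such $\beta$ are $q_K$-Weil numbers (with $q_K\le p^{f}$) of bounded weight and of degree at most $2g$ over $\mbb{Q}$; Honda--Tate theory, or directly Northcott's finiteness for algebraic integers of bounded degree with bounded archimedean absolute values, then supplies only finitely many candidates for $\beta$. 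Since the choice of $c\in\{1,\dots,d_G\}$ is finite and each $\beta$ has at most $c$ preimages under the $c$-th power map in $\mbb{Z}_p$, this pins $\mrm{Nr}_{k/\mbb{Q}_p}(\pi)$ down to a finite set, which combined with the $(\mu)$-part yields $\mcal{W}(f,g;k)$.

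The main obstacle is the extraction in the previous paragraph: making precise the relation $\mrm{Nr}_{k/\mbb{Q}_p}(\pi)^{c}=\beta$ that (W) actually imposes when it is the genuine cause of infinite torsion, and verifying that the abelian variety $B$ obtained after the structure-theoretic and rigid-uniformization reductions really does inherit the dimension bound $\le g$ and lives over a field with residue degree $\le f$. This requires a careful inspection of where and how condition (W) is invoked in the proof of Theorem \ref{MMT}, so that the Weil-number input to the Northcott/Honda--Tate step is genuinely finite.
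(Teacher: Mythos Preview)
Your plan is essentially the paper's own approach: handle $(\mu)$ by the finite set $q\cdot\mu_{p-1}\subset\mbb{Z}_p$, and for the (W)-side reopen the proof of Theorem~\ref{MMT} to tie $\mrm{Nr}_{k/\mbb{Q}_p}(\pi)$ to a Frobenius eigenvalue of a good-reduction abelian variety of dimension $\le g$ over a field with $f_K\le f$, then invoke finiteness of Weil integers of bounded degree and bounded archimedean absolute value. The paper also organises the reduction to abelian varieties with potential good reduction via Proposition~\ref{key} exactly as you anticipate, first building a set $\mcal{W}_{\mrm{ab}}(f,g;k)$ and then setting $\mcal{W}=\mcal{W}_{\mrm{ab}}\cup q\cdot\mu_{p-1}$.

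Two places where the actual extraction is more delicate than your sketch suggests. First, the relation one obtains is not literally $\mrm{Nr}_{k/\mbb{Q}_p}(\pi)^{c}=\beta$: what comes out (when $k/\mbb{Q}_p$ is Galois) is $a^{f_K}=\zeta\beta^{f_k}$ with $a=\prod_{\tau\in\Gamma_k}\tau(\pi)^{t_\tau}$, $\zeta$ a root of unity, and $\beta$ a $q_K$-Weil integer of weight~$1$ with $[\mbb{Q}(\beta):\mbb{Q}]\le 2g$. One then takes $\mrm{Nr}_{k_1/\mbb{Q}_p}$ for a fixed finite extension $k_1/k$ (adjoining all such $\beta$) to land in $\mbb{Q}_p$ up to a $(p-1)$-th root of unity; the unknown exponent $t_0=\sum_\tau t_\tau$ ranges over $\{1,\dots,[k:\mbb{Q}_p]\}$. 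Second, the bound $f_K\le f$ enters because after passing to a \emph{totally ramified} extension to acquire good reduction one still has $f_K\le f$, and the Frobenius eigenvalue $\beta$ is read off over this $K$ (not over the auxiliary larger field $K'$ where the character decomposition lives), via $\alpha=\beta^{f_{K'/K}}$. Finally, the paper treats the case $k/\mbb{Q}_p$ Galois first and then reduces the general case to it by passing to a suitable Galois $k'\supset k_G$; this step is absent from your outline but routine once the Galois case is in hand.
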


In Section 4, we also give finiteness results, 
such as Theorems \ref{MMT} and \ref{MT},  
on torsion of commutative algebraic groups with values in  
abelian extensions which are splitting fields of some crystalline characters 
(such as Lubin-Tate extensions).
The results seem to be conceptual and theoretical but 
they covers main results of \cite{Im} and \cite{Oz}.

\begin{acknowledgments}
The author would like to express his sincere gratitude to 
T. Hiranouchi and Y. Taguchi for useful discussions.  
This work is supported by JSPS KAKENHI Grant Number JP19K03433.
\end{acknowledgments}

\vspace{5mm}
\noindent
{\bf Notation :}
For any algebraic extension $F$ of $\mbb{Q}_p$, 
we denote by $G_F$  the absolute Galois group of $F$.
We also denote by $\cO_F$ and $\mbf{m}_F$ 
the ring of integers of $F$ and its maximal ideal, respectively.
For an algebraic extension $F'/F$,
we denote by $f_{F'/F}$ 
the extension degree of the residue field extension of  $F'/F$. 
We set $f_{F}:=f_{F/\mbb{Q}_p}$ to simplify notation.
We denote by $\mu_n(F)$ the set of $n$-th roots of unity in $F$, 
$\mu_n:=\mu_n(\overline{\mbb{Q}}_p)$,
$\mu_{\ell^{\infty}}(F):=\cup_{m\ge 0} \mu_{\ell^m}(F)$
for any prime number $\ell$ and 
$\mu_{\infty}(F):=\cup_{n\ge 0} \mu_{n}(F)$. 
Finally, any representation over a field in this paper is of finite dimension.


%
%

\section{Finiteness criteria of torsion of  algebraic groups}

The aim of this section is to show that 
we may reduce arguments of
finiteness of torsion points of commutative algebraic groups
to the cases 
of tori and abelian varieties with potential good reduction.
Let $K$ be a $p$-adic field and $L$ an algebraic extension of $K$.
Let $0<g\le \infty$ and let $\ell$ be any prime number (including the case $\ell =p$).
We consider the following conditions for a fixed data $(L/K,g,\ell)$:
\begin{description}
\item[{\rm ($\mu_{\ell^{\infty}}$)}]  The set $\mu_{\ell^{\infty}}(L')$ is finite for any finite extension $L'$ of $L$.
\item[{\rm (AV${}_{\ell^{\infty}}$)}] 
For  any abelian variety $A$ over $K$ with potential good reduction
 of dimension  $\le g$, 
the set  $A(L)[\ell^{\infty}]$ is finite.
\end{description}
We also consider the following conditions for a fixed data $(L/K, g)$:
\begin{description}
\item[{\rm ($\mu_{\infty}$)}] The set $\mu_{\infty}(L')$ is finite 
for any finite extension $L'$ of $L$.
\item[{\rm (AV${}_{\infty}$)}]
For  any abelian variety $A$ over $K$ with potential good reduction
 of dimension  $\le g$, 
the set  $A(L)_{\mrm{tor}}$ is finite.
\end{description}

\begin{remark}
\label{keylem}
Suppose that $L$ is a Galois extension of $K$.
We check that the condition {\rm ($\mu_{\ell^{\infty}}$)} {\rm (}resp.\ {\rm ($\mu_{\infty}$))} holds if and only if 
the set  $T(L)[\ell^{\infty}]$ {\rm (}resp.\ $T(L)_{\mrm{tor}}${\rm )} 
is finite
for any torus $T$ over $K$.

The necessity is clear and so we show the sufficiency.
Assume that the set  $T(L)[\ell^{\infty}]$ (resp.\ $T(L)_{\mrm{tor}}$) 
is finite for any torus $T$ over $K$.
Let $L'$ be a finite extension of $L$. 
Let $T_L:=\mrm{Res}_{L'/L}(\mbb{G}_{\mrm{m}})$
 be the Weil restriction of $\mbb{G}_{\mrm{m}}$.
We have $T_L(L)=\mbb{G}_{\mrm{m}}(L')$ by definition.
The torus $T_{L}$ descends to a torus $T_{K_0}$ 
over a finite subextension $K_0/K$ of $L/K$.
We set $H:=\mrm{Res}_{K_0/K}(T_{K_0})$, which is a torus over $K$. 
Since $L$ is a Galois extension of $K$, 
we have isomorphisms
$$
H(L)=T_{K_0}(L\otimes_{K} K_0)
\simeq \prod_{\sigma} T_{K_0}(L)
\simeq \prod_{\sigma} \mbb{G}_{\mrm{m}}(L')
$$
where $\sigma$ runs through all $K$-algebra embeddings $K_0\hookrightarrow L$.
Now the finiteness of  $\mu_{\ell^{\infty}}(L')$ (resp.\ $\mu_{\infty}(L')$) follows by the assumption.
\end{remark}

\begin{proposition}
\label{key}
Assume that $L$ is a Galois extension of $K$.

\noindent
{\rm (1)} If both  {\rm ($\mu_{\ell^{\infty}}$)} and {\rm (AV${}_{\ell^{\infty}}$)} hold for $(L/K,g,\ell)$,
then $G(L)[\ell^{\infty}]$ is finite 
for any commutative algebraic group $G$ over $K$ of dimension $\le g$.

\noindent
{\rm (2)} If both {\rm ($\mu_{\infty}$)} and {\rm (AV${}_{\infty}$)} hold for $(L/K,g)$,
then $G(L)_{\mrm{tor}}$ is finite
for any commutative algebraic group $G$ over $K$ of dimension $\le g$.
\end{proposition}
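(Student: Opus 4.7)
The plan is to perform a structural d\'evissage of $G$, reducing $G(L)[\ell^{\infty}]$ to the torsion of tori (controlled by $(\mu_{\ell^{\infty}})$ via Remark \ref{keylem}) and of abelian varieties with potential good reduction (controlled by $(\mrm{AV}_{\ell^{\infty}})$), with the non-archimedean uniformization used to handle a general abelian variety. Part (2) proceeds identically, invoking $(\mu_{\infty})$ and $(\mrm{AV}_{\infty})$ in place of their $\ell$-primary counterparts.

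First I would reduce to $G$ connected: the quotient $G/G^0$ is finite \'etale over $K$, so $(G/G^0)(L) \subseteq (G/G^0)(\overline{K})$ is finite, and the exact sequence $0 \to G^0(L) \to G(L) \to (G/G^0)(L)$ shows that $G(L)[\ell^{\infty}]/G^0(L)[\ell^{\infty}]$ is finite. Chevalley's theorem, applied over the perfect field $K$, then gives $0 \to L_G \to G^0 \to A \to 0$ with $L_G$ connected linear and $A$ an abelian variety of dimension at most $g$, and a further decomposition $0 \to U \to L_G \to T \to 0$ with $U$ unipotent and $T$ a torus. In characteristic zero, $U(L)$ is torsion-free (by d\'evissage along a composition series to $\mbb{G}_{\mrm{a}}(L) = L$), so $L_G(L)[\ell^{\infty}]$ injects into $T(L)[\ell^{\infty}]$, which is finite by Remark \ref{keylem} combined with $(\mu_{\ell^{\infty}})$. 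The exact sequence $0 \to L_G(L)[\ell^{\infty}] \to G^0(L)[\ell^{\infty}] \to A(L)[\ell^{\infty}]$ then reduces the problem to the finiteness of $A(L)[\ell^{\infty}]$.

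If $A$ has potential good reduction this is exactly hypothesis $(\mrm{AV}_{\ell^{\infty}})$. In the general case I would invoke the non-archimedean uniformization of \cite{Ra}, \cite{BL}, \cite{BX}: on $\overline{K}$-points there is an exact sequence $0 \to M \to E \to A \to 0$ of $G_K$-modules, in which $E$ is a semi-abelian variety over $K$ fitting in $0 \to T' \to E \to B \to 0$ with $T'$ a torus and $B$ an abelian variety of potential good reduction of dimension at most $g$, and $M$ is a free $\mbb{Z}$-module of finite rank on which $G_K$ acts through a finite quotient. Applying the previous paragraph to $E$ yields $E(L)[\ell^{\infty}]$ finite; from the induced short exact sequence $0 \to E[\ell^{\infty}] \to A[\ell^{\infty}] \to M \otimes \mbb{Q}_\ell/\mbb{Z}_\ell \to 0$ of $G_K$-modules one then deduces $A(L)[\ell^{\infty}]$ finite.

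The main obstacle lies in this last deduction, namely bounding the image of $A(L)[\ell^{\infty}]$ in $M \otimes \mbb{Q}_\ell/\mbb{Z}_\ell$. Here the hypothesis that $L/K$ is Galois is essential: the existence in $L$ of an $\ell^{n}$-division point of a nonzero $m \in M$ (viewed inside $E$) forces $L$ to contain the associated $\ell^{n}$-th roots of unity by conjugation, so $(\mu_{\ell^{\infty}})$ limits how many independent Kummer-classes from $M$ can contribute and yields the required uniform bound on the image. This Kummer-theoretic control of the uniformization lattice is the essential new input beyond the routine d\'evissage.
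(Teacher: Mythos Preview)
Your d\'evissage via Chevalley's theorem and the rigid uniformization matches the paper's strategy essentially exactly. The gap is in your final paragraph, where the real work lies.

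Your proposed mechanism — that an $\ell^{n}$-division point in $L$ of some $m\in M$ inside the semi-abelian $E$ ``forces $L$ to contain the associated $\ell^{n}$-th roots of unity'' — is not justified and is not generally true. Conjugating such a division point by $\mathrm{Gal}(L/K)$ only produces elements of $E(L)[\ell^{n}]$, not of $\mu_{\ell^{n}}$; there is no direct Kummer pairing between $M$ and $\mu_{\ell^{\infty}}$ for a general semi-abelian $E$. Even in the Tate-curve case one must use that the Tate parameter is not infinitely $\ell$-divisible in $K^{\times}$, and in the general case no analogue of this is available from the hypotheses. So $(\mu_{\ell^{\infty}})$ alone does not obviously bound the image of $A(L)[\ell^{\infty}]$ in $M\otimes\mbb{Q}_{\ell}/\mbb{Z}_{\ell}$.

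The paper closes this gap differently, and you should adopt that argument. Passing to rational Tate modules, one has $V_{\ell}(S)^{G_{L}}=0$ from $(\mu_{\ell^{\infty}})$ and $(\mathrm{AV}_{\ell^{\infty}})$ (your $E$ is the paper's $S$), hence an injection of $G_K$-modules $V_{\ell}(A)^{G_{L}}\hookrightarrow \mbb{Q}_{\ell}\otimes_{\mbb{Z}} M$. Since $G_K$ acts on $M$ through a finite quotient, the same is true for $V_{\ell}(A)^{G_{L}}$; thus $V_{\ell}(A)^{G_{L}}=V_{\ell}(A)^{G_{K'}}$ for some $p$-adic field $K'\subset L$, and this vanishes by Mattuck's theorem \cite{Mat}. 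The key input you are missing is Mattuck, not a Kummer bound.

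Finally, Part~(2) does \emph{not} proceed identically: knowing each $A(L)[\ell^{\infty}]$ is finite is not enough for $A(L)_{\mrm{tor}}$ finite. One must also show $A(L)[\ell]=0$ for almost all $\ell$. The paper does this by the mod-$\ell$ analogue of the above: for large $\ell$ one gets $S(L)[\ell]=0$, hence $A(L)[\ell]\hookrightarrow M/\ell M$ as $G_K$-modules, whence $A(L)[\ell]=A(L\cap K')[\ell]$, which vanishes for large $\ell$ again by \cite{Mat}.
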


\begin{proof}
At first, we reduce a proof to the case where $G$ is an abelian variety.
By a structure theorem of commutative algebraic groups (cf. \cite[Theorem 2.9]{Br}),
the commutative algebraic group  $G$ lies in an 
exact sequence
$$
0\to M\times U\to G \to A\to 0
$$
of group schemes over $K$.
Here, $M$ is a subgroup scheme of a torus, $U$ is unipotent and 
$A$ is an abelian variety.
Since $U(L)$ is torsion free, 
we have exact sequences
\begin{equation}
\label{exact}
0\to M(L)[\ell^{\infty}] \to G(L)[\ell^{\infty}] \to A(L)[\ell^{\infty}]\quad 
\mbox{and}\quad  
0\to M(L)_{\mrm{tor}}\to G(L)_{\mrm{tor}}\to A(L)_{\mrm{tor}}
\end{equation}
of $G_K$-modules.
The finiteness of $M(L)[\ell^{\infty}]$ (resp.\ $M(L)_{\mrm{tor}}$) 
follows from  ($\mu_{\ell^{\infty}}$) (resp.\ ($\mu_{\infty}$)).
Thus, to show the proposition,
we may assume that $G$ is an abelian variety.

In the rest of the proof, we assume that 
$G$ is an abelian variety of dimension $\le g$ and denote $G$ by $A$. 
We denote by $g_A$ the dimension of $A$. 
We use  a non-archimedian  rigid uniformization theorem of abelian varieties
(\cite{Ra}, \cite{BL}  and \cite{BX});
there exist the following data, which is called a {\it rigid uniformization} of $A$ 
(cf.\  \cite[Definition 1.1 and Theorem 1.2]{BX}):
\begin{itemize} 
\item[(i)] $S$ is a semi-abelian variety of dimension $g_A$ fits into an exact sequence
of $K$-group schemes
$0\to T \to S \to B \to 0$ where $T$ is a torus of rank $m$ and $B$
is an abelian variety which has potential good reduction,
\item[(ii)] a closed immersion of rigid $K$-groups $N^{\rm an}\hookrightarrow S^{\rm an}$ 
where  $N$ is a group scheme which is isomorphic to $\mbb{Z}^{\oplus m}$
 after a finite base extension. Here, the subscript  ``an'' is the GAGA functor, and  
\item[(iii)] a faithfully flat morphism $S^{\rm an}\to A^{\rm an}$ of rigid $K$-groups 
with kernel $N^{\rm an}$.
\end{itemize}
It holds that  $\mrm{dim}\ B\le g_A\le g$ and 
\if0
This data is simply written as  
\begin{center}
$\displaystyle \xymatrix{
  & T \ar[d] &  \\
M \ar[r] & S \ar[d] \ar[r] & A \\
& B & 
}$
\end{center}
The diagram above is called {\it uniformization cross}.
\fi
we have exact sequences
$$
0\to N(\overline{K}) \to S(\overline{K})\to A(\overline{K})\to 0 \quad \mrm{and} \quad 
0\to T(\overline{K}) \to S(\overline{K})\to B(\overline{K})\to 0
$$
of $G_K$-modules (since ``$N\to S\to A$'' are rigid analytic morphisms, the  exactness of
the former sequence might not be well-known;  see the proof of \cite[Theorem 2.3]{CX} for this).\\

\noindent
{\bf Proof of (1).} Assume  ($\mu_{\ell^{\infty}}$) and (AV${}_{\ell^{\infty}}$) for  $(L/K,g,\ell)$.
We have exact sequences
$$
0\to V_{\ell}(S)^{G_L} \to V_{\ell}(A)^{G_L}\to  \mbb{Q}_{\ell}\otimes_{\mbb{Z}} N(\overline{K}) 
\quad {\rm and} \quad 
0\to V_{\ell}(T)^{G_L} \to V_{\ell}(S)^{G_L}\to V_{\ell}(B)^{G_L}
$$
of $G_K$-modules (here we recall that $L$ is a Galois extension of $K$), where 
 $V_{\ell}(\ast)$ stands for the rational ${\ell}$-adic Tate module. 
By ($\mu_{\ell^{\infty}}$) and  (AV${}_{\ell^{\infty}}$) for $(L/K,g,\ell)$, we know that 
$V_{\ell}(T)^{G_L} $ and $V_{\ell}(B)^{G_L}$ are zero, respectively
(here, we recall that the dimension of $B$  is at most $g$). 
This gives the fact that $V_{\ell}(S)^{G_L}$ is also zero.
Hence   we have an injection 
$V_{\ell}(A)^{G_L}\hookrightarrow  \mbb{Q}_{\ell}\otimes_{\mbb{Z}} N(\overline{K}) $ 
of $G_K$-modules. 
This shows that the  $G_K$-action on $V_{\ell}(A)^{G_L}$ factors though a finite quotient.
Hence there exists a $p$-adic subfield $K'$ of $L$  
such that $V_{\ell}(A)^{G_L}=V_{\ell}(A)^{G_{K'}}$.
Since  $V_{\ell}(A)^{G_{K'}}$ is zero by the main theorem of \cite{Mat}, 
we conclude that $A(L)[{\ell}^{\infty}]$ is finite.\\

\noindent
{\bf Proof of (2).}  Assume  ($\mu_{\infty}$) and (AV${}_{\infty}$) for  $(L/K,g)$.
It follows from (1) that $A(L)[\ell^{\infty}]$ is finite for all prime numbers $\ell$.
Hence it suffices to show that $A(L)[\ell]=0$ for almost all 
prime numbers $\ell\not=p$. 
Consider exact sequences 
$$
0\to S(L)[\ell]\to A(L)[\ell]\to N(\overline{K})/\ell N(\overline{K})
\quad {\rm and} \quad 
0\to T(L)[\ell] \to S(L)[\ell]\to B(L)[\ell]
$$
of $G_K$-modules (again we recall  that $L$ is a Galois extension of $K$).
It follows from ($\mu_{\infty}$) and (AV${}_{\infty}$)  for  $(L/K,g)$
that $T(L)[\ell] $ and $B(L)[\ell]$ are  zero 
for any $\ell$ large enough, respectively (again, we recall that the  dimension of $B$  is at most $g$),
which implies $S(L)[\ell]=0$.
For such $\ell$, we have an injection 
$A(L)[\ell]\hookrightarrow N(\overline{K})/\ell N(\overline{K})$
of $G_K$-modules.
Now we take a finite extension $K'/K$ so that
$N$ is isomorphic to a constant group $\mbb{Z}^{\oplus m}$ over $K'$.
Then the existence of 
an injection 
$A(L)[\ell]\hookrightarrow N(\overline{K})/\ell N(\overline{K})$
of $G_K$-modules  shows $A(L)[\ell]=A(L\cap K')[\ell]$, 
which must be zero for any $\ell$ large enough 
by the main theorem of \cite{Mat}.
\end{proof}

\begin{remark}
In Proposition \ref{key}, 
the condition that $L$ is a {\it Galois} extension of $K$
is necessarily. Let $\varpi$ be a uniformizer of $K$
and $\varpi_n$ a  $p^n$-th power root of $\varpi$ such that $\varpi_0=\varpi$ and
  $\varpi_{n+1}^p=\varpi_n$ for all $n\ge 0$.
We set  $L:=K(\varpi_n; n\ge 0)$, which is not a Galois extension of $K$.
We see that ($\mu_{\infty}$) is satisfied for $(L/K,g)$. 
It is a result of \cite{KT} that (AV${}_{\infty}$) also hold for $(L/K,g)$.
However, if we denote by
 $E_{\varpi}$ the Tate curve over $K$ associated with $\varpi$,
then $E_{\varpi}(L)[p^{\infty}]$ is infinite.
\end{remark}

By considering Weil restrictions, 
we obtain a slight generalization of Proposition \ref{key} in the case $g=\infty$.

\begin{corollary}
\label{keycor}
Assume that $L$ is a Galois extension of $K$.

\noindent
{\rm (1)} If both  {\rm ($\mu_{\ell^{\infty}}$)} and {\rm (AV${}_{\ell^{\infty}}$)} hold for $(L/K,g=\infty,\ell)$,
then $G(M)[\ell^{\infty}]$ is finite 
for any finite extension $M/L$ and any commutative algebraic group $G$ over $M$.

\noindent
{\rm (2)} If both {\rm ($\mu_{\infty}$)} and {\rm (AV${}_{\infty}$)} hold for $(L/K,g=\infty)$,
then $G(M)_{\mrm{tor}}$ is finite
for any finite extension $M/L$ and any commutative algebraic group $G$ over $M$.
\end{corollary}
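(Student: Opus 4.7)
The plan is to reduce to Proposition \ref{key} by applying Weil restriction twice: first along $M/L$ to bring $G$ down to a commutative algebraic group over $L$, then along a finite subextension of $L/K$ to land over $K$, at which point Proposition \ref{key} applies directly.

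More precisely, I would first form $H := \mrm{Res}_{M/L}(G)$, a commutative algebraic group over $L$ with $H(L) = G(M)$ as abelian groups (existence uses only that $M/L$ is finite separable and that any algebraic group over a field is quasi-projective). Since $H$ is of finite type over $L = \varinjlim K_\alpha$, where $K_\alpha/K$ ranges over finite subextensions of $L/K$, standard descent produces some such $K_0$ and a commutative algebraic group $H_0$ over $K_0$ with $H_0 \otimes_{K_0} L \simeq H$. Next I would form $H_1 := \mrm{Res}_{K_0/K}(H_0)$, a commutative algebraic group over $K$. Because $L/K$ is Galois and contains $K_0$, all $K$-embeddings of $K_0$ into $\overline{K}$ factor through $L$, giving an $L$-algebra isomorphism $L \otimes_K K_0 \simeq L^{\oplus [K_0:K]}$ and hence $H_1(L) \simeq H_0(L)^{\oplus [K_0:K]}$ as groups. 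Applying Proposition \ref{key}(1) to $H_1$ over $K$ with data $(L/K, g=\infty, \ell)$ yields that $H_1(L)[\ell^{\infty}]$ is finite; hence so is $H_0(L)[\ell^{\infty}] = H(L)[\ell^{\infty}] = G(M)[\ell^{\infty}]$, proving (1). Part (2) is the same argument using Proposition \ref{key}(2).

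I do not expect any serious obstacle; the only nontrivial bookkeeping is confirming that Weil restriction along a finite separable extension of fields sends commutative algebraic groups to commutative algebraic groups, but this is classical since algebraic groups over a field are quasi-projective and Weil restriction along a finite étale cover preserves quasi-projectivity, with commutativity automatic from the universal property.
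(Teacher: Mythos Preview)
Your proposal is correct and follows essentially the same approach as the paper: form the Weil restriction $\mrm{Res}_{M/L}(G)$, descend it to a finite subextension $K_0/K$, take $\mrm{Res}_{K_0/K}$ of the descended group, use the Galois decomposition $L\otimes_K K_0\simeq \prod_\sigma L$ to identify its $L$-points with a product of copies of $G(M)$, and apply Proposition~\ref{key}. The only differences are notational.
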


\begin{proof}
Let $G$ be a commutative algebraic group over $M$.
Let $G_0:=\mrm{Res}_{M/L} (G)$ be the Weil restriction.
Then $G_0$ descends to a commutative algebraic group 
over a $p$-adic subfield $K_0$ in $L/K$, which we also denote by $G_0$.
Setting $H:=\mrm{Res}_{K_0/K}(G_0)$, then $H$ is a commutative algebraic group 
over $K$ and we have isomorphisms
$H(L)\simeq \prod_{\sigma} G_0(L)
\simeq \prod_{\sigma} G(M)$
where $\sigma$ runs through all $K$-algebra embeddings
$K_0 \hookrightarrow L$. 
Now the results immediately follow from Proposition \ref{key}. 
\end{proof}

\section{Locally algebraic representations and the invariant $\delta_{\chi}$}

We recall  standard properties of 
locally algebraic representations (cf.\ \cite{Se}, \cite[Appendix B]{Co}).
We also introduce an invariant $\delta_{\chi}$ for crystalline characters $\chi$.
The keys in this section are Lemmas \ref{lem:char} and \ref{root}, 
which will be often used later.

\subsection{Locally algebraic representations}
Let $k$ and $E$ be $p$-adic fields and $\chi\colon G_k\to E^{\times}$
a continuous character. We often regard $\chi$ as a character of $G^{\mrm{ab}}_k$.
Let $\mrm{Art}_k\colon k^{\times}\to G^{\mrm{ab}}_k$ be the local Artin map with arithmetic normalization. 
We define $\mbb{Q}_p$-tori $\underline{k}^{\times}$ and $\underline{E}^{\times}$ to be 
the Weil restrictions of scalars
 $\underline{k}^{\times}:=\mrm{Res}_{k/\mbb{Q}_p}(\mbb{G}_m)$ 
and $\underline{E}^{\times}:=\mrm{Res}_{E/\mbb{Q}_p}(\mbb{G}_m)$.
\begin{definition}
We say that $\chi$ is {\it locally algebraic} if there exists a (necessarily unique) 
$\mbb{Q}_p$-homomorphism $\underline{k}^{\times}\to \underline{E}^{\times}$
whose restriction to $\mbb{Q}_p$-points agrees with $\chi\circ \mrm{Art}_k$ near $1$.
\end{definition}

\begin{proposition}
\label{LA1} 
Let $E(\chi)$ be the 
$\mbb{Q}_p$-representation of $G_k$ underlying a $1$-dimensional 
$E$-vector space endowed with an $E$-linear action by $G_k$ via $\chi$.
Then,  $E(\chi)$ is Hodge-Tate if and only if 
$\chi$ is locally algebraic.
\end{proposition}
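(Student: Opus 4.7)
The plan is to reduce, via decomposition along embeddings $E \hookrightarrow \overline{\mbb{Q}}_p$, to the classical case of one-dimensional $\overline{\mbb{Q}}_p$-valued Hodge-Tate characters, where the equivalence between being Hodge-Tate and being locally algebraic is the Serre-Sen-Tate theorem. The key observation is that the $\mbb{Q}_p$-algebra isomorphism
$$\mbb{C}_p \otimes_{\mbb{Q}_p} E \;\simeq\; \prod_{\sigma\colon E \hookrightarrow \overline{\mbb{Q}}_p} \mbb{C}_p$$
is $G_k$-equivariant (trivial action on $E$) and $E$-algebra. Combined with the fact that the $G_k$-action on $E(\chi)$ is $E$-linear through $\chi$, it induces a $G_k$-stable $\mbb{C}_p$-linear decomposition
$$\mbb{C}_p \otimes_{\mbb{Q}_p} E(\chi) \;\simeq\; \bigoplus_{\sigma} \mbb{C}_p(\sigma \circ \chi).$$
Hence $E(\chi)$ is Hodge-Tate as a $\mbb{Q}_p$-representation iff each character $\sigma\circ\chi\colon G_k \to \overline{\mbb{Q}}_p^\times$ is Hodge-Tate. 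On the other hand, a $\mbb{Q}_p$-homomorphism of tori $\underline{k}^\times \to \underline{E}^\times$ is uniquely determined by, and freely assembled from, its compositions with the coordinate projections $\underline{E}^\times_{\overline{\mbb{Q}}_p} \simeq \prod_\sigma \mbb{G}_m$, so local algebraicity of $\chi$ is equivalent to local algebraicity of every $\sigma\circ\chi$. This reduces the statement to the case in which $E$ is (identified with) a $p$-adic subfield of $\overline{\mbb{Q}}_p$ and $\chi$ takes values in $\overline{\mbb{Q}}_p^\times$.

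For the direction ``locally algebraic implies Hodge-Tate'', I would argue concretely. Any $\mbb{Q}_p$-homomorphism $\underline{k}^\times \to \mbb{G}_m$ is, after base change to $\overline{\mbb{Q}}_p$, of the form $(x_\tau)_\tau \mapsto \prod_\tau x_\tau^{n_\tau}$ for integers $n_\tau$ indexed by embeddings $\tau\colon k \hookrightarrow \overline{\mbb{Q}}_p$. Hence $\chi \circ \mrm{Art}_k$ agrees near $1$ with $u \mapsto \prod_\tau \tau(u)^{n_\tau}$. The character on the right is, up to a finite-order twist, the $\prod_\tau n_\tau$-fold product of Lubin-Tate characters normalized by each $\tau$; such a product is manifestly Hodge-Tate, with Hodge-Tate weight $n_\tau$ at $\tau$.

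For the harder direction ``Hodge-Tate implies locally algebraic'', I would invoke Sen's theory: if $\chi$ is Hodge-Tate, then the Sen operator on $\mbb{C}_p \otimes_{\mbb{Q}_p} \mbb{Q}_p(\chi)$ (analyzed embedding by embedding along $\tau\colon k \hookrightarrow \overline{\mbb{Q}}_p$) is semisimple with integer eigenvalues $n_\tau$. Compare $\chi|_{I_k}$ with the explicit locally algebraic character $\psi\colon u \mapsto \prod_\tau \tau(u)^{n_\tau}$ built from those integers. By construction the quotient $\chi \cdot \psi^{-1}$ (restricted to inertia, viewed through $\mrm{Art}_k$) is a Hodge-Tate character all of whose Hodge-Tate weights vanish; Tate's computation $H^0(G_L, \mbb{C}_p(i)) = 0$ for $i\neq 0$ and $\mbb{C}_p^{G_L}=L$ forces its image on an open subgroup of inertia to be trivial. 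Thus $\chi$ and $\psi$ agree on an open subgroup, giving the required $\mbb{Q}_p$-homomorphism $\underline{k}^\times \to \underline{E}^\times$.

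The main obstacle is the ``only if'' direction, where the passage from Hodge-Tate-ness (a Hodge-theoretic/analytic input) to local algebraicity (an algebraic output) requires Sen-Tate decomposition theory; I would not reprove this from scratch but would quote the classical references \cite{Se} and \cite[Appendix B]{Co}, to which the text already points, limiting my own argument to the embedding-wise reduction detailed above.
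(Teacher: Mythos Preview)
Your proposal is correct and, at bottom, matches the paper's approach: the paper's entire proof is the single sentence ``The result is a consequence of \cite[III, A.6, Corollary]{Se},'' and your argument is a faithful unpacking of exactly that reference (embedding-wise splitting of $\mbb{C}_p\otimes_{\mbb{Q}_p}E(\chi)$, then Sen--Tate for $\overline{\mbb{Q}}_p$-valued characters). No genuinely different route is taken.
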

\begin{proof}
The result is a consequence of  \cite[III, A.6, Corollary]{Se}.
\end{proof}

Assume that $E(\chi)$ is Hodge-Tate. 
For any $\sigma\in \Gamma_E$, let $\chi_{\sigma E}\colon I_{\sigma E}\to E^{\times}$ 
be the restriction to the inertia $ I_{\sigma E}$ of the Lubin-Tate character associated  
with any choice of uniformizer of $\sigma E$ 
(it depends on the choice of a uniformizer of $\sigma E$,
but its restriction to the inertia subgroup does not).
Then, taking a finite extension  $k'$ of  $kE$ large enough,
we have 
$$
\chi = 
\prod_{\sigma \in \Gamma_E} \sigma^{-1} \circ \chi_{\sigma E}^{h_{\sigma}}
$$
on the inertia $I_{k'}$ for some integer $h_{\sigma}$. 
We may assume that $k'$ contains the Galois closure  $\tilde{k}$ of $kE/\mbb{Q}_p$. 
Note that $\{h_{\sigma} \mid \sigma\in \Gamma_E \}$ is the set of Hodge-Tate weights of $E(\chi)$,
that is, $C\otimes_{\mbb{Q}_p} E(\chi)\simeq \oplus_{\sigma \in \Gamma_E} C(h_{\sigma})$
where $C$ is the completion of $\overline{\mbb{Q}}_p$.
Thus 
$h=h_{E(\chi)}:=\sum_{\sigma\in \Gamma_E} h_{\sigma}$
is the sum of all Hodge-Tate weights of $E(\chi)$. 
We also set $\tilde{h}:=[\tilde{k}:E]\cdot h$, 
which is the sum of all Hodge-Tate weights of $\tilde k(\chi)$. 
We denote by $k_{\chi}$ the definition field of $\chi$
and put $K_{\chi}=Kk_{\chi}$ for any $p$-adic field $K$.

\begin{lemma}
\label{lem:char}
Assume that $\chi$ is locally algebraic 
and let the notation for $\chi$ be as above.
Let $K$ and $F$ be $p$-adic fields and 
$\psi\colon G_K\to F^{\times}$ a locally algebraic character
such that the restriction of $\psi$ to an open subgroup of $G_{K_{\chi}}$ is trivial. 
We fix a lift  $\hat{u}\colon \overline{\mbb{Q}}_p\to \overline{\mbb{Q}}_p$ 
of  each $u\in \Gamma_{\tilde{k}}$. 
If $k_{\chi}$ has a finite residue field, then we have 
$$
\left(\prod_{u\in \Gamma_{\tilde{k}}} \hat{u}\circ \psi \right)^{\tilde{h}}=
\left( \prod_{u\in \Gamma_{\tilde{k}}} \hat{u}\circ \chi \right)^{\tilde{r}}
$$
on an open subgroup of $G_{Kk}$.
Here, 
$\tilde{r}=\sum_{\tau\in \Gamma_{\tilde{k}}} r_{\tau}$ for some Hodge-Tate weight $r_{\tau}$ 
of  $F(\psi)$. 
\end{lemma}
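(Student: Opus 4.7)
The strategy is to restrict both sides of the identity to a large inertia subgroup on which $\chi$ and $\psi$ admit explicit Lubin--Tate expressions, compute the symmetrization $\prod_u\hat{u}\circ(-)$ on each side, match Hodge--Tate weights, and then use the triviality hypothesis together with the finite-residue-field property of $k_\chi$ to control the remaining unramified part.

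I would first enlarge $k'$ to a $p$-adic field $k''$ that contains $K_\chi$ and the Galois closure of $KF/\mbb{Q}_p$. By Proposition~\ref{LA1} and the local algebraicity of $\psi$, we then have on $I_{k''}$ both $\chi=\prod_{\sigma\in\Gamma_E}\sigma^{-1}\circ\chi_{\sigma E}^{h_\sigma}$ and $\psi=\prod_{\tau\in\Gamma_F}\tau^{-1}\circ\chi_{\tau F}^{r_\tau}$, the latter with integer Hodge--Tate weights $r_\tau$ of $F(\psi)$.

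Next comes the key computation. Using $E\subseteq\tilde{k}$ so that $\hat{u}|_E=u|_E$, together with a Galois-equivariant choice of uniformizers $\pi_{\sigma E}=\sigma(\pi_E)$, one finds $\hat{u}\sigma^{-1}\circ\chi_{\sigma E}=\chi_{u(E)}$ on $I_{k''}$. Distributing $\hat{u}$ through the Lubin--Tate product gives $\hat{u}\circ\chi|_{I_{k''}}=\chi_{u(E)}^{h}$ with $h=\sum_\sigma h_\sigma$, and grouping the product over $\Gamma_{\tilde{k}}$ by restriction to $E$ (each $\sigma\in\Gamma_E$ arising with multiplicity $[\tilde{k}:E]$) yields
\[
\Big(\prod_{u\in\Gamma_{\tilde{k}}}\hat{u}\circ\chi\Big)\Big|_{I_{k''}} \;=\; \prod_{\sigma\in\Gamma_E}\chi_{\sigma E}^{\tilde{h}}.
\]
The parallel computation for $\psi$ motivates the choice $r_\tau:=r_{\tau|_F}$ for each $\tau\in\Gamma_{\tilde{k}}$ (after passing from $F$ to $F\tilde{k}$ if necessary, which preserves the Hodge--Tate weights of $F(\psi)$), making each $r_\tau$ a Hodge--Tate weight of $F(\psi)$ as required and giving $\tilde{r}=[\tilde{k}:F]\cdot r$ with $r=\sum_{\tau_0\in\Gamma_F}r_{\tau_0}$; one then obtains analogously
\[
\Big(\prod_{u\in\Gamma_{\tilde{k}}}\hat{u}\circ\psi\Big)\Big|_{I_{k''}} \;=\; \prod_{\tau\in\Gamma_F}\chi_{\tau F}^{\tilde{r}}.
\]

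With this, the ratio $\eta:=\big(\prod_u\hat{u}\circ\chi\big)^{\tilde{r}}\cdot\big(\prod_u\hat{u}\circ\psi\big)^{-\tilde{h}}$ has matching Hodge--Tate weights on both sides (since each of $\prod_\sigma\chi_{\sigma E}$ and $\prod_\tau\chi_{\tau F}$ differs from the cyclotomic character only by a finite-order twist), so $\eta$ is unramified on $I_{k''}$. It then remains to see that $\eta$ is also of finite order on the unramified quotient, for this forces $\eta$ to be trivial on an open subgroup of $G_{Kk}$. Here the triviality hypothesis $\psi|_U=1$ on an open $U\subseteq G_{K_\chi}$ reduces $\eta|_U$ to $\big(\prod_u\hat{u}\circ\chi\big)^{\tilde{r}}|_U$, and the finite-residue-field property of $k_\chi$ ensures, via the Lubin--Tate description of $\chi$ over the definition field $k_\chi$, that the Frobenius values of $\chi$ on $G_{K_\chi}$ lie in a finite subgroup, whence the Frobenius of $\eta$ is a root of unity. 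The main obstacle will be this last step: one must extract from the definitional setup of $k_\chi$ a concrete description of the Frobenius behavior of $\chi$ on $G_{k_\chi}$ making $\big(\prod_u\hat{u}\circ\chi\big)^{\tilde{r}}(\mrm{Frob})$ manifestly torsion, which is precisely where the finite-residue-field hypothesis on $k_\chi$ and the Ribet--Banaszak-Gajda-Krason Kummer-theoretic machinery alluded to in the introduction come into play.
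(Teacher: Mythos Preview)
Your proposal contains a genuine gap in the ``parallel computation for $\psi$.'' The symmetrization $\prod_{u\in\Gamma_{\tilde k}}\hat u\circ(-)$ collapses the $\chi$ side to a power of $\prod_{\sigma}\sigma^{-1}\circ\chi_{\tilde k}$ precisely because $E\subset\tilde k$, so that averaging over $\Gamma_{\tilde k}$ permutes the factors. For $\psi$ there is no such inclusion: $F$ bears no a~priori relation to $\tilde k$, so summing over $\Gamma_{\tilde k}$ does \emph{not} permute the $\Gamma_F$ factors, and your displayed identity $\prod_u\hat u\circ\psi|_{I_{k''}}=\prod_{\tau\in\Gamma_F}\chi_{\tau F}^{\tilde r}$ is false in general. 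Your attempted fix ``$r_\tau:=r_{\tau|_F}$ for $\tau\in\Gamma_{\tilde k}$ after passing from $F$ to $F\tilde k$'' is not well-defined: a given $\tau\in\Gamma_{\tilde k}$ has many lifts to $\Gamma_{F\tilde k}$, and these restrict to different elements of $\Gamma_F$, with a~priori different Hodge--Tate weights $r_{\tau_0}$. (Already when $\tilde k=\mbb{Q}_p$ your formula $\tilde r=[\tilde k:F]\cdot r$ makes no sense unless $F=\mbb{Q}_p$.)

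What is actually needed --- and is the heart of the paper's proof --- is the statement that the Hodge--Tate weights $n_{\sigma'}$ of $\psi$ (indexed by $\sigma'\in\Gamma_{K'}$) depend only on $\sigma'|_{\tilde k}$. This is \emph{not} automatic; it uses the hypothesis that $\psi$ is trivial on an open subgroup of $G_{K_\chi}$. The paper establishes it by showing that $\psi|_{I_{K'}}$, viewed through the Artin map, descends along the norm $\cO_{K'}^\times\to\cO_{\tilde k}^\times$ (this is the existence of $\hat\psi$ in the commutative diagram), and then invoking \cite[Lemma~2.4]{Oz} to deduce the invariance of the exponents. Only then can one write $\psi=\prod_{\sigma\in\Gamma_{\tilde k}}\sigma^{-1}\circ\chi_{\tilde k}^{r_\sigma}$ on $I_{K'}$ and carry out the symmetrization. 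Your argument uses the triviality hypothesis only at the very end, for the unramified part, and so misses this essential constraint on the weights.

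Finally, your treatment of the unramified part is also off. The character $\chi$ is \emph{trivial} on $G_{k_\chi}$ by definition of $k_\chi$, so there is nothing to extract about ``Frobenius values of $\chi$ on $G_{K_\chi}$''; and the Kummer-theoretic machinery of Ribet and Banaszak--Gajda--Krason is used elsewhere in the paper (for the structure of $\widetilde K$), not here. The correct endgame is much simpler: once the identity holds on $I_{K'}$, both sides factor through $\mrm{Gal}(K'_\chi/K')$, they agree on its inertia subgroup, and the quotient by inertia is finite because $k_\chi$ has finite residue field; hence the ratio has finite order and the identity holds on an open subgroup.
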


\begin{proof}
Take a Galois extension $K'/\mbb{Q}_p$ so that all the properties below hold.
\begin{itemize}
\item[(i)] $K'$ contains $K,k'$ and the Galois closure of $F/\mbb{Q}_p$,
\item[(ii)] $\psi$ is trivial on $G_{K'_{\chi}}$ and
\item[(iii)] $\psi = \prod_{\sigma \in \Gamma_F}
\sigma^{-1} \circ \chi_{\sigma F}^{n_{\sigma}}$ on $I_{K'}$
where $\chi_{\sigma F}$ 
is the restriction to $I_{\sigma F} (\supset I_{K'})$ of the Lubin-Tate character associated  
with a uniformizer of $\sigma F$.
\end{itemize}
On the other hand, we have a decomposition $\cO_E^{\times}= \mu_{\infty}(E)\times V_E$, 
where $\mu_{\infty}(E)$
is the set of roots of unity in $E$ and  $V_E$ is an open subgroup of $\cO_E^{\times}$
which is isomorphic to $\mbb{Z}_p^{[E:\mbb{Q}_p]}$.
Replacing $K'$ by a finite extension, 
we may assume that 
\begin{itemize}
\item[(iv)] $\chi(I_{K'})$ has values in $V_E$.
\end{itemize}
Note that $\chi$ coincides with
$
\prod_{\sigma \in \Gamma_E} \sigma^{-1}\circ \chi_{\sigma E}^{h_{\sigma}}
$
on $I_{K'}$ by  (i)
and the set
$\{n_{\sigma} \mid \sigma\in \Gamma_{F} \}$ 
is the set of Hodge-Tate weights of $F(\psi)$. 
Put 
$ I^{\mrm{ab}}_{K'}=\mrm{Gal}(K'^{\mrm{ab}}/K'^{\mrm{ur}})$,\ 
$I^{\mrm{ab}}_{\tilde{k}}=\mrm{Gal}(\tilde{k}^{\mrm{ab}}/\tilde{k}^{\mrm{ur}})$,\ 
$N'=\mrm{Gal}(K'^{\mrm{ur}}K'_{\chi}/K'^{\mrm{ur}})$ and 
$\tilde{N}=\mrm{Gal}(\tilde{k}^{\mrm{ur}}\tilde{k}_{\chi}/\tilde{k}^{\mrm{ur}})$.
Since  $\psi$ restricted to  $G_{K'_{\chi}}$ is trivial, 
$\psi|_{I_{K'}}$ factors through $N'$.
We have the following commutative diagram. 
$$
\displaystyle \xymatrix{
\cO_{K'}^{\times} \ar^{\simeq}_{\mrm{Art}_{K'}}[r] \ar^{\mrm{Nr}_{K'/\tilde{k}}}[d] 
& I^{\mrm{ab}}_{K'} \ar^{\mrm{res}}[r]  \ar^{\mrm{res}}[d]
& N' \ar^{\psi}[r]   \ar^{\mrm{res}}[d] 
& K'^{\times} \ar@{_{(}->}[d]
\\
\cO_{\tilde{k}}^{\times}  \ar^{\simeq}_{\mrm{Art}_{\tilde{k}}}[r]
& I^{\mrm{ab}}_{\tilde{k}} \ar^{\mrm{res}}[r] 
& \tilde{N}
& \overline{\mbb{Q}}_p^{\times}
}$$
Here, ``Nr'', ``res'' and ``Art'' stand for the norm,  the restriction and the local Artin map
(with arithmetic normalization), respectively.
We claim that there exists a homomorphism  $\hat{\psi}\colon \tilde{N} \to \overline{\mbb{Q}}_p^{\times}$
which makes the above diagram commutative. 
It follows from the condition (iv) that  we may regard $N'$ as a closed submodule 
of $V_E$ via $\chi$. In particular, $N'$ is a sub $\mbb{Z}_p$-module of $V_E$.
By an elementary divisor theory, we may identify
$V_E$ and $N'$ with  
$\oplus^{[E:\mbb{Q}_p]}_{i=1} \mbb{Z}_p$ and  
$\oplus^{[E:\mbb{Q}_p]}_{i=1} p^{n_i}\mbb{Z}_p$  
for some $0\le n_i\le \infty$, respectively.
Thus we see that  there exists an extension 
$V_E\to  \overline{\mbb{Q}}_p^{\times}$ of 
$\psi\colon N'\to K'^{\times}$.
Moreover, it extends to  some character  $\cO_E^{\times}\to  \overline{\mbb{Q}}_p^{\times}$.
Composing this with $\chi\colon \tilde{N}\to \cO_E^{\times}$,
we obtain the desired $\hat{\psi}$.

We regard $\psi$ as a character of $\cO_{K'}^{\times}$ via local class field theory.
Then, we have 
$$
\psi(x)
= \prod_{\sigma \in \Gamma_F} \sigma^{-1}\mrm{Nr}_{K'/\sigma F}(x^{-1})^{n_{\sigma}}
= \prod_{\sigma' \in \Gamma_{K'}} \sigma'^{-1}(x^{-1})^{n_{\sigma'}}
$$
for $x\in \cO_{K'}^{\times}$ since $K'/\mbb{Q}_p$ is a Galois extension.
Here, $n_{\sigma'}:=n_{\sigma'|_{F}}$
for $\sigma'\in \Gamma_{K'}$. 
Then, it follows from the existence of $\hat{\psi}$ that 
we obtain
$
 \prod_{\sigma' \in \Gamma_{K'}} \sigma'^{-1}(x^{-1})^{n_{\tau \sigma'}}
= \prod_{\sigma' \in \Gamma_{K'}} \sigma'^{-1}(x^{-1})^{n_{\sigma'}}
$
for $x\in \cO_{K'}^{\times}$ and $\tau\in \mrm{Gal}(K'/\tilde{k})$.
By \cite[Lemma 2.4]{Oz}, we have $n_{\sigma'}=n_{\rho'}$ for $\sigma',\rho'\in \Gamma_{K'}$ with 
$\sigma'|_{\tilde{k}}=\rho'|_{\tilde{k}}$. 
For any $\sigma\in \Gamma_{\tilde{k}}$, 
we define $r_{\sigma}:=n_{\sigma'}$ for  a lift $\sigma'\in \Gamma_{K'}$ of $\sigma$. 
Then we have 
$
\psi(x)
=\prod_{\sigma \in \Gamma_{\tilde k}} \sigma^{-1}\mrm{Nr}_{K'/\tilde k}(x^{-1})^{r_{\sigma}}
$
for $x\in \cO_{K'}^{\times}$.
This implies that we have 
\begin{equation}
\label{eta}
\psi=\prod_{\sigma \in \Gamma_{\tilde k}} \sigma^{-1}\circ \chi_{\tilde k}^{r_{\sigma}}
\end{equation}
on $I_{K'}$
where $\chi_{\tilde k}$ 
is the restriction to $I_{\tilde k} (\supset I_{K'})$ of the Lubin-Tate character associated  
with a uniformizer of $\tilde k$.
On the other hand, 
we have 
\begin{equation}
\label{chi}
\chi 
=\prod_{\sigma \in \Gamma_E} \sigma^{-1}\circ \chi_{\sigma E}^{h_{\sigma}}
=\prod_{\sigma \in \Gamma_E} \sigma^{-1}\circ
\left(\prod_{\tau\in \Gamma_{\tilde k}, \tau|_{\sigma E}=1} \tau\circ \chi_{\tilde k}\right)^{h_{\sigma}}
=\prod_{\sigma \in \Gamma_{\tilde k}} \sigma^{-1}\circ \chi_{\tilde k}^{\tilde{h}_{\sigma}}
\end{equation}
on $I_{K'}$ where $\tilde{h}_{\sigma}:=h_{\sigma|_E}$ for $\sigma \in \Gamma_{\tilde k}$.

Recall that  $\hat{u}\colon \overline{\mbb{Q}}_p\to \overline{\mbb{Q}}_p$ 
is a lift of $u\in \Gamma_{\tilde{k}}$
and $\tilde{h}=[\tilde{k}:E]\sum_{\sigma\in \Gamma_E} h_{\sigma}$.
We set $\tilde r:=\sum_{\sigma\in \Gamma_{\tilde k}} r_{\sigma}$. 
We remark that $r_{\sigma}$ is a Hodge-Tate weight of $F(\psi)$ by definition.
It follows from \eqref{eta}  and \eqref{chi}  that we have 
$
\prod_{u\in \Gamma_{\tilde k}} \hat{u}\circ \psi
=\left(\prod_{\sigma \in \Gamma_{\tilde k}} \sigma^{-1}\circ \chi_{\tilde k}\right)^{\tilde r}
$
and 
$\prod_{u\in \Gamma_{\tilde k}} \hat{u}\circ \chi
=\left(\prod_{\sigma \in \Gamma_{\tilde k}} \sigma^{-1}\circ \chi_{\tilde k}\right)^{\tilde h}
$
on $I_{K'}$. 
Hence we obtain 
\begin{equation}
\label{etachi}
\left(\prod_{u\in \Gamma_{\tilde{k}}} \hat{u}\circ \psi \right)^{\tilde{h}}=
\left( \prod_{u\in \Gamma_{\tilde{k}}} \hat{u}\circ \chi \right)^{\tilde{r}}
\end{equation}
on $I_{K'}$.
Since  the restriction of $\psi$ and $\chi$  
to  $G_{K'_{\chi}}$ is trivial and the residue field of $k_{\chi}$
is finite, the equality \eqref{etachi} holds 
on an open subgroup of $G_{K'}$.
\end{proof}

\begin{remark}
\label{remark}
Suppose $k=E$ is a Galois extension of $\mbb{Q}_p$ and $\chi=\chi_{\pi}\colon G_k\to k^{\times}$
is the Lubin-Tate character associated with a uniformizer $\pi$ of $k$.
The argument above shows Lemma 2.5 of \cite{Oz};
if $\psi$ is as in Lemma \ref{lem:char}, 
then we have 
$$
\psi=\prod_{\sigma \in \Gamma_k} \sigma^{-1}\circ \chi_{\pi}^{r_{\sigma}} 
$$
on an open subgroup of $G_{Kk}$
with some Hodge-Tate weight $r_{\sigma}$ of  $F(\psi)$. 
In fact, this follows immediately from \eqref{eta} and 
the assumption that 
the restriction of $\psi$ to an open subgroup of $G_{k_{\pi}}$ is trivial.
\end{remark}

\subsection{The invariant $\delta_{\chi}$}

We introduce a technical invariant for crystalline characters.
The following 
observation of Conrad plays an important role.
\begin{proposition}[{\cite[Proposition B.4]{Co}}]
\label{LA2}
Let $E(\chi)$ be the 
$\mbb{Q}_p$-representation of $G_k$ underlying a $1$-dimensional 
$E$-vector space endowed with a $E$-linear action by $G_k$ via $\chi$.

\noindent
{\rm (1)} $E(\chi)$ is crystalline if and  only if 
there exists a $\mbb{Q}_p$-homomorphism 
$\chi_{\rm alg}\colon \underline{k}^{\times}\to \underline{E}^{\times}$
such that $\chi\circ \mrm{Art}_k$ and $\chi_{\rm alg}$ {\rm (}on $\mbb{Q}_p$-points{\rm )} 
coincides on $\cO_k^{\times}$.

\noindent
{\rm (2)} Let $k_0$ be the maximal unramified subextension 
of $k/\mbb{Q}_p$ and put $f=[k_0:\mbb{Q}_p]$. 
Assume that $E(\chi)$ is crystalline and let $\chi_{\rm alg}$ be as in {\rm (1)}. 
{\rm (}Note that $E(\chi^{-1})$ is also  crystalline.{\rm )}
Then, the filtered $\phi$-module 
$D^k_{\mrm{cris}}(E(\chi^{-1}))=(B_{\mrm{cris}}\otimes_{\mbb{Q}_p} E(\chi^{-1}))^{G_k}$
over $k$ is free of rank $1$ over $k_0\otimes_{\mbb{Q}_p} E$ 
and its $k_0$-linear endomorphism $\phi^f$ 
is given by the action of the product $\chi(\mrm{Art}_k(\pi))\cdot \chi_{\rm alg}^{-1}(\pi)\in E^{\times}$.  
Here, $\pi$ is any uniformizer of $k$.
\end{proposition}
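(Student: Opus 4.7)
The plan is to reduce both parts to Lubin-Tate characters (whose filtered $\phi$-module is explicit via the associated formal group) together with unramified twists, exploiting that any $\mbb{Q}_p$-homomorphism $\underline{k}^{\times} \to \underline{E}^{\times}$ is, after enlarging $E$ to a Galois extension containing $k$, of the form $x\mapsto \prod_{\sigma\colon k\hookrightarrow E}\sigma(x)^{n_\sigma}$ on $\mbb{Q}_p$-points.

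For the ``if'' direction of (1), I would argue that for each embedding $\sigma$ the character $\sigma\circ\chi_\pi$ obtained from a Lubin-Tate character $\chi_\pi$ attached to a uniformizer $\pi$ of $k$ is crystalline, because it is realized on the rational Tate module of the associated Lubin-Tate formal group. An appropriate product of such characters yields a crystalline character $\chi_0$ whose composition with $\mrm{Art}_k$, restricted to $\cO_k^{\times}$, equals $\chi_{\mrm{alg}}|_{\cO_k^{\times}}$. Then $\chi=\chi_0\eta$ for a unique unramified character $\eta$ determined by its value at $\mrm{Art}_k(\pi)$; since unramified characters are crystalline, so is $\chi$. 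For the ``only if'' direction, crystalline implies Hodge-Tate, hence locally algebraic by Proposition \ref{LA1}, producing a candidate $\chi_{\mrm{alg}}$ agreeing with $\chi\circ\mrm{Art}_k$ on a neighborhood of $1$ in $k^{\times}$. Attaching to this $\chi_{\mrm{alg}}$ the crystalline character $\chi_0$ from the construction above, the ratio $\chi/\chi_0$ is crystalline with vanishing Hodge-Tate weights and with finite-order restriction to inertia; a one-dimensional crystalline representation with all Hodge-Tate weights zero is unramified, so $\chi/\chi_0$ is unramified and the required equality on all of $\cO_k^{\times}$ follows.

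For (2), $E$-linearity of $D^k_{\mrm{cris}}$ and one-dimensionality of $E(\chi^{-1})$ over $E$ immediately yield that $D^k_{\mrm{cris}}(E(\chi^{-1}))$ is free of rank $1$ over $k_0\otimes_{\mbb{Q}_p}E$. Writing $\chi=\chi_0\eta$ as above, on each $\sigma$-twisted Lubin-Tate constituent of $E(\chi_0^{-1})$ the action of $\phi^f$ on $D^k_{\mrm{cris}}$ is given by the corresponding power of $\sigma(\pi)$, reflecting the relation $\phi^f=\pi$ on the covariant Dieudonn\'e module of a Lubin-Tate formal group; combining contributions across embeddings yields the factor $\chi_{\mrm{alg}}^{-1}(\pi)$. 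On the unramified factor, $\phi^f$ on $D^k_{\mrm{cris}}(E(\eta^{-1}))$ acts by $\eta(\mrm{Art}_k(\pi))=\chi(\mrm{Art}_k(\pi))\cdot\chi_0(\mrm{Art}_k(\pi))^{-1}$, and a direct check shows that $\chi_0(\mrm{Art}_k(\pi))^{-1}$ combines with the Lubin-Tate part so that the total $\phi^f$-eigenvalue simplifies to $\chi(\mrm{Art}_k(\pi))\cdot\chi_{\mrm{alg}}^{-1}(\pi)$.

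The main obstacle is the ``only if'' part of (1): promoting ``locally algebraic'' (a Hodge-Tate consequence) to the precise equality $\chi\circ\mrm{Art}_k=\chi_{\mrm{alg}}$ on all of $\cO_k^{\times}$ uses the full crystalline hypothesis, via the classification of rank-one crystalline characters with trivial Hodge-Tate weights as unramified, to rule out a finite-order wild correction on inertia. The rest is sign-chasing with the local Artin map together with bookkeeping for the explicit Dieudonn\'e module of Lubin-Tate formal groups.
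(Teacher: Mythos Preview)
The paper does not prove this proposition; it is stated with a citation to \cite[Proposition B.4]{Co} and used as a black box. So there is no ``paper's own proof'' to compare against---your sketch is effectively an attempt to reconstruct Conrad's argument.

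That said, your outline is sound and is essentially the standard route to this result. The decomposition $\chi=\chi_0\eta$ with $\chi_0$ a product of embedding-twisted Lubin-Tate characters and $\eta$ unramified is exactly the right structural reduction, and your identification of the crux---that a crystalline character with all Hodge-Tate weights zero must be unramified---is correct and is what promotes the Hodge-Tate locally-algebraic statement (Proposition~\ref{LA1}) to the sharper crystalline statement here. For (2), the Dieudonn\'e module of a Lubin-Tate formal group does satisfy $\phi^f=\pi$ (in the appropriate normalization), and the bookkeeping you describe across embeddings and the unramified twist is straightforward.

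Two small points to keep clean if you flesh this out. First, you enlarge $E$ so that it is Galois over $\mbb{Q}_p$ and contains $k$; this is harmless for (1), but for (2) you should note that the claimed eigenvalue $\chi(\mrm{Art}_k(\pi))\cdot\chi_{\mrm{alg}}^{-1}(\pi)$ already lies in the original $E^{\times}$, so the statement descends. Second, when you write that on each $\sigma$-constituent $\phi^f$ contributes $\sigma(\pi)$, be careful with covariant versus contravariant conventions and with the sign in the local Artin map (arithmetic normalization here); the computation goes through, but the signs are where errors creep in.
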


\begin{definition}
\label{delta}
If $E(\chi)$ is crystalline and $\chi_{\rm alg}$ is  as in (1) of Proposition \ref{LA2}, then 
we set 
$$
\delta_{\chi}:=\chi(\mrm{Art}_k(\pi))\cdot \chi_{\rm alg}^{-1}(\pi)\in E^{\times}.
$$ 
Here, $\pi$ is any uniformizer of $k$. 
\end{definition}
Note that $\delta_{\chi}$ is independent of the choice of $\pi$ by Proposition \ref{LA2} (1).
We also note that we have $\delta_{\chi^{-1}}=\delta_{\chi}^{-1}$ by definition.

\begin{example}
\label{exLT}
Suppose $k=E$ and $\chi$ is 
the Lubin-Tate character $\chi_{\pi}\colon G_k \to k^{\times}$ associated with 
a uniformizer $\pi$ of $k$.
Then, the $\mbb{Q}_p$-homomorphism 
$\chi_{\rm alg} \colon \underline{k}^{\times}\to \underline{k}^{\times}$
corresponding to $\chi_{\pi}$  
is given by $x\mapsto x^{-1}$. 
 Thus we have $\delta_{\chi_{\pi}}=\pi$.
\end{example}

\begin{definition}
\label{Weil2}
Let $K/\mbb{Q}_p$ be a finite extension
of residual extension degree $f_K$
and $K_0/\mbb{Q}_p$ the maximal 
unramified subextension of $K/\mbb{Q}_p$.
We denote by $\vphi_{K_0}\colon K_0 \to K_0$ 
the arithmetic Frobenius of $K_0$, that is, the (unique) lift of $p$-th power map 
on the residue field of $K_0$.

\noindent
(1)
Let $D$ be a $\vphi$-module over $K_0$, that is, 
a finite dimensional $K_0$-vector space with 
$\vphi_{K_0}$-semilinear map $\vphi \colon D\to D$.
Then $\vphi^{f_K}\colon D\to D$ is a $K_0$-linear map.
We call $\mrm{det}(T-\vphi^{f_K}\mid  D)$ 
the {\it characteristic polynomial of $D$}.

\noindent
(2)
For  a $\mbb{Q}_p$-representation $V$ of $G_K$,
we set $D^K_{\mrm{cris}}(V):=(B_{\mrm{cris}}\otimes_{\mbb{Q}_p} V)^{G_K}$,
which is a filtered $\vphi$-module over $K$. 
\end{definition}


\begin{lemma}
\label{root}
Assume that  $\chi\colon G_k\to E^{\times}$ is crystalline. 
Let $\delta_{\chi}\in E^{\times}$ be as in Definition \ref{delta}. 
Let $k'$ and $E'$ be finite extensions of $k$ and $E$, respectively. 

\noindent 
{\rm (1)} $\alpha\in \overline{\mbb{Q}}_p$ is a root of the characteristic polynomial of 
the filtered $\vphi$-module $D^{k'}_{\mrm{cris}}(E'(\chi^{-1}))$ over $k'$
if and only if  $\alpha=\tau(\delta_{\chi})^{f_{k'/k}}$ for some $\tau\in \Gamma_E$.

\noindent
{\rm (2)} Assume that $E'$ is a Galois extension of $\mbb{Q}_p$.
Let $\Gamma$ be a finite set of  $\mbb{Q}_p$-algebra homomorphisms
$\overline{\mbb{Q}}_p\to \overline{\mbb{Q}}_p$.
Put $\hat{\chi}=\prod_{\hat{u}\in \Gamma} \hat{u}\circ \chi^{n_{\hat u}}$
for some integers $n_{\hat u}$.
We regard $\hat{\chi}$ as a {\rm (}crystalline{\rm )} character from $G_k$ to $E'^{\times}$.
Then, any root of the characteristic polynomial  of 
the filtered $\vphi$-module $D^{k'}_{\mrm{cris}}(E'(\hat{\chi}^{-1}))$ over $k'$
is of the form 
$$
\left( \prod_{\tau \in \Gamma_E } \tau(\delta_{\chi})^{t_{\tau}} \right)^{f_{k'/k}}
$$
for some integers $t_{\tau}$ such that 
$\sum_{\tau\in \Gamma_E} t_{\tau} = \sum_{\hat u\in \Gamma} n_{\hat u}$.
Furthermore, we can take each $t_{\tau}$ as a non-negative integer 
if $n_{\hat u}\ge 0$ for any $\hat u\in \Gamma$.
\end{lemma}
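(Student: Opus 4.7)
The plan is to apply Proposition \ref{LA2}(2) to identify the filtered $\vphi$-module $D^{k}_{\mrm{cris}}(E'(\chi^{-1}))$ (resp.\ $D^{k}_{\mrm{cris}}(E'(\hat\chi^{-1}))$) as a rank-one module over $k_0 \otimes_{\mbb{Q}_p} E'$ with a scalar Frobenius action, then base-change to $k'$, and finally read off the characteristic polynomial as a norm-type product over $\Gamma_{E'}$.

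For (1), since $\chi$ factors through $E^{\times} \hookrightarrow E'^{\times}$ and $E'(\chi^{-1})$ is crystalline, Proposition \ref{LA2}(2) yields that $D^k_{\mrm{cris}}(E'(\chi^{-1}))$ is free of rank one over $k_0 \otimes_{\mbb{Q}_p} E'$ with $\vphi^{f_k}$-action by multiplication by $\delta_{\chi} \in E^{\times} \subset E'^{\times}$. Base-changing along $k_0 \hookrightarrow k'_0$ (with $k'_0$ the maximal unramified subextension of $k'/\mbb{Q}_p$) and using $f_{k'} = f_k \cdot f_{k'/k}$, the module $D^{k'}_{\mrm{cris}}(E'(\chi^{-1}))$ becomes free of rank one over $k'_0 \otimes_{\mbb{Q}_p} E'$, and the $k'_0$-linear endomorphism $\vphi^{f_{k'}} = (\vphi^{f_k})^{f_{k'/k}}$ acts as $\beta := \delta_{\chi}^{f_{k'/k}}$. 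Choosing a $\mbb{Q}_p$-basis of $E'$ shows that the characteristic polynomial of multiplication by $1 \otimes \beta$ on the $k'_0$-module $k'_0 \otimes_{\mbb{Q}_p} E'$ equals that of multiplication by $\beta$ on $E'$ over $\mbb{Q}_p$, namely $\prod_{\sigma \in \Gamma_{E'}}(T - \sigma(\beta))$. Since restriction gives a surjection $\Gamma_{E'} \twoheadrightarrow \Gamma_E$, the set of roots is exactly $\{\tau(\delta_{\chi})^{f_{k'/k}} : \tau \in \Gamma_E\}$, which is (1).

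For (2), the extra input is the multiplicativity and functoriality of the invariant $\delta$, namely $\delta_{\chi_1 \chi_2} = \delta_{\chi_1} \delta_{\chi_2}$ and $\delta_{\hat u \circ \chi} = \hat u(\delta_{\chi})$. Both follow directly from Definition \ref{delta} combined with the identities $(\chi_1 \chi_2)_{\mrm{alg}} = \chi_{1,\mrm{alg}} \chi_{2,\mrm{alg}}$ and $(\hat u \circ \chi)_{\mrm{alg}} = \hat u \circ \chi_{\mrm{alg}}$. Grouping the $\hat u \in \Gamma$ by their restriction to $E$ gives
\[
\delta_{\hat\chi} = \prod_{\hat u \in \Gamma} \hat u(\delta_{\chi})^{n_{\hat u}} = \prod_{\tau \in \Gamma_E} \tau(\delta_{\chi})^{t_\tau}, \qquad t_\tau := \sum_{\hat u \in \Gamma,\ \hat u|_E = \tau} n_{\hat u},
\]
so $\sum_\tau t_\tau = \sum_{\hat u} n_{\hat u}$, with $t_\tau \ge 0$ whenever all $n_{\hat u} \ge 0$. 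Since $E'/\mbb{Q}_p$ is Galois and contains $E$, each $\tau(\delta_{\chi}) \in E'$. Applying the argument of (1) with $\hat\chi$ in place of $\chi$, the roots of the characteristic polynomial of $D^{k'}_{\mrm{cris}}(E'(\hat\chi^{-1}))$ are $\sigma(\delta_{\hat\chi})^{f_{k'/k}}$ for $\sigma \in \Gamma_{E'}$. Expanding and using that $\tau \mapsto \sigma \tau|_E$ is a bijection of $\Gamma_E$, these reindex to the claimed form with exponents $\{t'_{\tau'}\}$ satisfying the same sum condition and non-negativity property.

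The only genuine obstacle is bookkeeping around Frobenius iteration---the passage from semilinear $\vphi$ to $k_0$-linear $\vphi^{f_k}$ on $D^k_{\mrm{cris}}$ and then to $k'_0$-linear $\vphi^{f_{k'}}$ after base change---together with the identification of the $k'_0$-characteristic polynomial of multiplication by an element of $E'$ as a norm-type product over $\Gamma_{E'}$. Once those ingredients are in place, both parts reduce to the multiplicative/functorial properties of $\delta$ and Galois-theoretic reindexing.
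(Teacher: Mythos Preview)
Your proof of (1) matches the paper's argument essentially line for line: reduce from $k'$ to $k$ via the $f_{k'/k}$-th power relation on Frobenius eigenvalues, invoke Proposition~\ref{LA2}(2) to identify $\vphi^{f_k}$ with multiplication by $1\otimes\delta_\chi$ on $k_0\otimes_{\mbb{Q}_p}E'$, and then observe that the $k_0$-characteristic polynomial of this map equals the $\mbb{Q}_p$-characteristic polynomial of multiplication by $\delta_\chi$ on $E'$, whose roots are the $\tau(\delta_\chi)$ for $\tau\in\Gamma_E$.

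For (2) your route diverges from the paper's. The paper does not compute $\delta_{\hat\chi}$ at all. Instead it argues in two steps: first, the surjection $D^k_{\mrm{cris}}(E'(\chi_1))\otimes_{k_0}D^k_{\mrm{cris}}(E'(\chi_2))\twoheadrightarrow D^k_{\mrm{cris}}(E'(\chi_1\chi_2))$ shows that every root for a product character is a product of roots for the factors; second, for a single twist $\hat u\circ\chi$, the $\mbb{Q}_p$-representations $E'(\hat u\circ\chi^{-1})$ and $E'(\chi^{-1})$ have the same trace (since $E'/\mbb{Q}_p$ is Galois), hence isomorphic semi-simplifications, hence identical Frobenius characteristic polynomials, reducing to (1). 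You instead establish the identities $\delta_{\chi_1\chi_2}=\delta_{\chi_1}\delta_{\chi_2}$ and $\delta_{\hat u\circ\chi}=\hat u(\delta_\chi)$ directly from Definition~\ref{delta} (the second using that $\hat u|_E$ induces a $\mbb{Q}_p$-morphism $\underline{E}^\times\to\underline{E'}^\times$, which is where the Galois hypothesis on $E'$ enters), obtain $\delta_{\hat\chi}=\prod_{\tau\in\Gamma_E}\tau(\delta_\chi)^{t_\tau}$ explicitly, and then apply (1) to $\hat\chi$ and reindex via the bijection $\tau\mapsto\sigma\tau|_E$ of $\Gamma_E$ for each $\sigma\in\Gamma_{E'}$. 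Your approach is more explicit and arguably cleaner, since it pins down $\delta_{\hat\chi}$ and avoids the semi-simplification detour; the paper's approach trades that computation for a softer trace argument. Both are correct.
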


\begin{proof}
For any crystalline character $\psi\colon G_k\to {E'}^{\times}$,
the set of  roots of the characteristic polynomial  of 
the filtered $\vphi$-module $D^{k'}_{\mrm{cris}}(E'(\psi))$ over $k'$
is the $f_{k'/k}$-th power of that of 
the filtered $\vphi$-module  $D^k_{\mrm{cris}}(E'(\psi))$ over $k$.
Hence it suffices to consider the case where $k'=k$. 

\noindent
(1) By Proposition \ref{LA2}, it suffices to show that 
$\alpha\in \overline{\mbb{Q}}_p$ is a root of the characteristic polynomial of 
the multiplication-by-$(1\otimes \delta_{\chi})$ map 
on the $k_0$-vector space $k_0\otimes_{\mbb{Q}_p} E'$ 
if and only if $\alpha$ is 
of the form $\tau(\delta_{\chi})$ for some $\tau\in \Gamma_E$.
Since this characteristic polynomial coincides with 
that of  the multiplication-by-$\delta_{\chi}$ map 
on the $\mbb{Q}_p$-vector space $E'$,
the result immediately follows (cf.\ \cite[Proposition 2.6]{Ne}). 

\noindent
(2)
For crystalline characters $\chi_1,\chi_2\colon G_k\to E'^{\times}$,
roots of the characteristic polynomial of 
$D^k_{\mrm{cris}}(E'(\chi_1\chi_2))$ is a product of those of 
$D^k_{\mrm{cris}}(E'(\chi_1))$ and $D^{k}_{\mrm{cris}}(E'(\chi_2))$ 
since we have a  surjection 
$D^k_{\mrm{cris}}(E'(\chi_1))\otimes_{k_0} D^{k}_{\mrm{cris}}(E'(\chi_2))
\to D^k_{\mrm{cris}}(E'(\chi_1\chi_2))$ induced from the natural map
$E'(\chi_1)\otimes_{\mbb{Q}_p} E'(\chi_2)\to E'(\chi_1)\otimes_{E'} E'(\chi_2)=E'(\chi_1\chi_2)$.
Therefore, it suffices to show that, for any $\mbb{Q}_p$-algebra homomorphism
$\hat{u}\colon \overline{\mbb{Q}}_p\to \overline{\mbb{Q}}_p$, 
any root of the characteristic polynomial  of 
$D^k_{\mrm{cris}}(E'(\hat u \circ \chi^{-1}))$
is of the form $\tau(\delta_{\chi})$ with some $\tau\in \Gamma_E$.
Comparing traces of two $\mbb{Q}_p$-representations 
$E'(\hat u \circ \chi^{-1})$ and  $E'(\chi^{-1})$ of $G_k$,
we see that  the semi-simplifications of them are isomorphic to each others. 
Hence we may assume that $\hat{u}=\mrm{id}$. Therefore, the result follows by (1).
\end{proof}


\section{Finiteness theorems for abelian extensions obtained by crystalline characters}

Let $k$ and $E$ be $p$-adic fields
and  $\chi\colon G_k\to E^{\times}$ a continuous character.
We assume that $\chi$ is crystalline. 
We denote by $k_{\chi}$ the definition field of $\chi$
and set $K_{\chi}:=Kk_{\chi}$ for any $p$-adic field $K$.
By definition, we have $\mrm{ker}(\chi|_{G_K})=G_{K_{\chi}}$.
The aim of this section is to give some finiteness results on torsion points $G(K_{\chi})_{\mrm{tor}}$
for commutative algebraic groups $G$ over $K$,
which are generalizations of the main theorem of \cite{Oz}.
The invariant $\delta_{\chi}\in E^{\times}$ for $\chi$ defined in the previous section
plays a crucial role for our results. 
Let $\tilde{k}$ be  the Galois closure of $kE/\mbb{Q}_p$ and set $\tilde{d}:=[\tilde{k}:\mbb{Q}_p]$.

\begin{theorem}
\label{MTab}
Let $\chi\colon G_k\to E^{\times}$ be a crystalline character.
Assume that the following two conditions hold.
\begin{itemize}
\item[{\rm (H1)}] The residue field of $k_{\chi}$ is finite.
\item[{\rm (H2)}] The sum $h_{E(\chi)}$ of all Hodge-Tate weights (with multiplicity) of the 
$\mbb{Q}_p$-representation $E(\chi)$ is not zero.
\end{itemize}
Furthermore, we consider the following conditions.
\begin{itemize}
\item[{\rm (W)'}] $\mrm{Nr}_{E/\mbb{Q}_p}(\delta_{\chi})$
is a $q_k$-Weil number of weight $h_{E(\chi)}\tilde{d}/c$
for some integer $1\le c\le \tilde{d}$. 
If $h_{E(\chi)}>0$, we furthermore have that $\mrm{Nr}_{E/\mbb{Q}_p}(\delta_{\chi})$
is an algebraic integer.
\item[{\rm ($\mu$)'}] $q_k^{-h_{E(\chi)}}\mrm{Nr}_{E/\mbb{Q}_p}(\delta_{\chi})$ is a root of unity.
\end{itemize}

\noindent
{\rm (1)} Assume that {\rm (W)'} does not hold.
Then, the torsion subgroup  $A(K_{\chi})_{\mrm{tor}}$ of $A(K_{\chi})$ is finite
for any $p$-adic field $K$ and any abelian variety $A$ over $K$
with potential good reduction.

\noindent
{\rm (2)} Assume that neither {\rm (W)'} nor {\rm ($\mu$)'} holds. 
Then, the torsion subgroup  $G(K_{\chi})_{\mrm{tor}}$ of $G(K_{\chi})$ is finite
for any $p$-adic field $K$ and any commutative algebraic group $G$ over $K$.
\end{theorem}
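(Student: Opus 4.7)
The plan is to derive both parts from a Tate-module analysis using Lemmas \ref{lem:char} and \ref{root}, with Part (2) reducing to Part (1) via Corollary \ref{keycor}(2).

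For Part (1), suppose for contradiction that $A(K_{\chi})_{\mrm{tor}}$ is infinite, so $V_{\ell}(A)^{G_{K_{\chi}}}\neq 0$ for some prime $\ell$. For $\ell\neq p$, the $G_K$-action on this subspace factors through $\mrm{Gal}(K_{\chi}/K)\subset E^{\times}$, which admits only finitely many continuous $\ell$-adic characters (as $\cO_E^{\times}$ is pro-$p$ modulo its finite torsion), hence factors through a finite quotient; this contradicts the main theorem of \cite{Mat}. So assume $\ell=p$. After replacing $K$ by a finite extension on which $A$ has good reduction, Faltings--Tate semisimplicity of $V_p(A)$ together with a scalar extension to a sufficiently large Galois $F/\mbb{Q}_p$ extracts a one-dimensional subrepresentation corresponding to a crystalline character $\psi\colon G_K\to F^{\times}$ trivial on an open subgroup of $G_{K_{\chi}}$. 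Lemma \ref{lem:char} then yields
$$\left(\prod_{u\in \Gamma_{\tilde k}} \hat u\circ \psi\right)^{\tilde h}=\left(\prod_{u\in \Gamma_{\tilde k}} \hat u\circ \chi\right)^{\tilde r}$$
on an open subgroup of $G_{Kk}$, where $\tilde r=\sum_\tau r_{\tau}$ is a sum of Hodge-Tate weights of $F(\psi)$ and each $r_\tau\in\{0,1\}$ since $\psi$ embeds in $V_p(A)$. Comparing characteristic polynomials of $\vphi^{f_{K'}}$ on both sides via Lemma \ref{root}(2), the right side contributes a Frobenius eigenvalue of the form $\prod_{\tau\in\Gamma_E}\tau(\delta_\chi)^{t_\tau}$ (up to an $f_{K'/k}$-power) with non-negative integers $t_\tau$ summing to $\tilde r\tilde d$, while the left side contributes $N_{\tilde k/\mbb{Q}_p}(\beta)^{\tilde h}$ where $\beta$ is a $q_k$-Weil number of weight $1$ by Weil's theorem applied to the good-reduction $A$. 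Taking archimedean absolute values under every complex embedding and exploiting the Galois symmetries of the identity that constrain the $t_\tau$ across $\Gamma_E$-orbits (achieved by averaging over the Galois orbit of $\psi$) forces $\mrm{Nr}_{E/\mbb{Q}_p}(\delta_{\chi})$ to be a $q_k$-Weil number of weight $h_{E(\chi)}\tilde d/c$ for some integer $1\le c\le \tilde d$. The integrality clause when $h_{E(\chi)}>0$ follows from a Newton-polygon argument, using that $V_p(A)$ has non-negative Hodge-Tate weights and hence $\delta_\chi$ has non-negative $p$-adic valuation. This is exactly (W)', contradicting the hypothesis.

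For Part (2), I apply Corollary \ref{keycor}(2) to the Galois extension $K_{\chi}/K$ with $g=\infty$. Condition (AV${}_{\infty}$) is Part (1). For $(\mu_{\infty})$, since $K_\chi$ has finite residue field by (H1), only $\ell=p$ can contribute infinite roots of unity in a finite extension of $K_\chi$, so it suffices that the $p$-adic cyclotomic character $\omega_p$ is nontrivial on every open subgroup of $G_{K_\chi}$. If instead $\omega_p$ were trivial on such an open subgroup, Lemma \ref{lem:char} applied with $\psi=\omega_p$ (crystalline, Hodge-Tate weight $1$, Frobenius eigenvalue $q_k$) together with Lemma \ref{root}(2) would force $q_k^{-h_{E(\chi)}}\mrm{Nr}_{E/\mbb{Q}_p}(\delta_\chi)$ to be a root of unity, i.e.\ $(\mu)'$, contradicting the hypothesis. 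The main technical obstacle is the bookkeeping in Part (1): pinning down the precise weight $h_{E(\chi)}\tilde d/c$ and integer $c$ from the raw character identity requires a careful analysis of how the non-uniform exponents $t_\tau$ from Lemma \ref{root}(2) distribute across $\Gamma_E$-orbits after Galois-averaging, which is where the interplay between Hodge-Tate theory and the Weil-number structure of Frobenius eigenvalues becomes most delicate.
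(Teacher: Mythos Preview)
Your overall architecture matches the paper's: reduce to the $p$-adic Tate module, extract a one-dimensional crystalline character $\psi$, invoke Lemma~\ref{lem:char}, then compare Frobenius eigenvalues via Lemma~\ref{root}. Part~(2) is handled the same way in both. But there are two genuine problems in your execution of Part~(1).

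First, ``Faltings--Tate semisimplicity'' is not available for $p$-adic local fields; $V_p(A)$ is typically not semisimple as a $G_K$-representation. The correct (and simpler) reason you can extract a one-dimensional $\psi$ is that the $G_K$-action on $V_p(A)^{G_{K_\chi}}$ factors through the \emph{abelian} group $\mrm{Gal}(K_\chi/K)\hookrightarrow E^\times$, so after extending scalars to a large enough $F$ any Jordan--H\"older factor is one-dimensional. The paper works with the dual $(V_p(A)^{G_{K_\chi}})^\vee$ for exactly this reason.

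Second, and more seriously, the step you flag as ``the main technical obstacle'' is in fact much simpler than you suggest, and your proposed method (``averaging over the Galois orbit of $\psi$'' to control the distribution of the exponents $t_\tau$) is not how it goes. What Lemma~\ref{root}(2) actually gives on the $\psi$-side is $\alpha=\prod_{\tau\in\Gamma_F}\tau(\delta_\psi)^{s_\tau}$ with $\sum s_\tau=\tilde h\tilde d$, not a norm $\mrm{Nr}_{\tilde k/\mbb{Q}_p}(\beta)^{\tilde h}$. The Weil conjecture tells you each $\tau(\delta_{\psi}^{-1})$ is a $q_K$-Weil integer of weight $1$, so $\alpha^{-1}$ is a $q_K$-Weil number of weight $\tilde h\tilde d$ (and an algebraic integer when $\tilde h>0$, since then the $s_\tau$ may be taken non-negative). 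On the $\chi$-side, $\alpha=\bigl(\prod_{\tau\in\Gamma_E}\tau(\delta_\chi)^{t_\tau}\bigr)^{f_{K/k}}$ with $\sum t_\tau=\tilde r\tilde d$. The one-line trick that resolves everything is to apply $\mrm{Nr}_{\tilde E/\mbb{Q}_p}$ to $\alpha_0:=\prod_{\tau}\tau(\delta_\chi)^{-t_\tau}$: since $\mrm{Nr}_{\tilde E/\mbb{Q}_p}(\tau(\delta_\chi))$ is independent of $\tau$, the unequal exponents collapse to their sum, giving
\[
\mrm{Nr}_{\tilde E/\mbb{Q}_p}(\alpha_0)=\mrm{Nr}_{E/\mbb{Q}_p}(\delta_\chi)^{-[\tilde E:E]\tilde r\tilde d},
\]
which is a $q_k$-Weil number of weight $\tilde h\tilde d\cdot[\tilde E:\mbb{Q}_p]$. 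This immediately yields that $\mrm{Nr}_{E/\mbb{Q}_p}(\delta_\chi)$ is a $q_k$-Weil number of weight $-\tilde r^{-1}h\tilde d$ with $1\le -\tilde r\le\tilde d$. Your integrality argument is also off: it is $\delta_{\psi}^{-1}$, not $\delta_\chi$, that is an algebraic integer (via the Weil conjecture for $A$), and integrality of $\mrm{Nr}_{E/\mbb{Q}_p}(\delta_\chi)$ when $h_{E(\chi)}>0$ follows because $\alpha_0^{f_{K/k}}=\alpha^{-1}$ is then an algebraic integer. No Newton-polygon input about $\delta_\chi$ is needed or available.
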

Applying the above theorem to the case where 
$k=E$ and $\chi$ is the Lubin-Tate character associated with a uniformizer of $k$,
we can recover the main theorem of \cite{Oz}
with a slight refinement for the weight of a Weil number.
We remark that the condition $h_{E(\chi)}>0$ appeared in (W) is harmless since 
$k_{\chi}=k_{\chi^{-1}}$ and $\delta_{\chi}^{-1}=\delta_{\chi^{-1}}$. 
If  $h_{E(\chi)}$ is zero, we have

\begin{theorem}
\label{MTab2}
Let $\chi\colon G_k\to E^{\times}$ be a crystalline character.
Assume that the following two conditions hold.
\begin{itemize}
\item[{\rm (H1)}] The residue field of $k_{\chi}$ is finite.
\item[{\rm (H2)}] The sum of all Hodge-Tate weights (with multiplicity) of the 
$\mbb{Q}_p$-representation $E(\chi)$ is zero.
\end{itemize}
Then, the torsion subgroup  $G(K_{\chi})_{\mrm{tor}}$ of $G(K_{\chi})$ is finite
for any $p$-adic field $K$ and any commutative algebraic group $G$ over $K$.
\end{theorem}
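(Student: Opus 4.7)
The plan is to run the same style of argument as in the proof of Theorem~\ref{MTab}(2), with the drastic simplification that $\tilde h=[\tilde k\!:\!E]\cdot h_{E(\chi)}=0$ collapses the left-hand side of the key identity in Lemma~\ref{lem:char} to the trivial character, so no conditions of type (W)$'$ or ($\mu$)$'$ are needed.

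First I would reduce via Corollary~\ref{keycor}(2) applied to $L=K_{\chi}$, which is abelian (hence Galois) over $K$: it suffices to verify $(\mu_{\infty})$ and $(\mrm{AV}_{\infty})$ for the pair $(K_{\chi}/K,\ g=\infty)$. Hypothesis (H1) implies that $K_{\chi}=Kk_{\chi}$, and any finite extension thereof, has finite residue field; this immediately handles the $\ell\ne p$ case of $\mu_{\ell^{\infty}}$, while the $\ell\ne p$ case of $A(L')[\ell^{\infty}]$ for abelian $A/K$ with potential good reduction follows from a standard Weil number argument on inertia invariants.

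For the remaining $p$-primary parts, I would argue by contradiction. If $\mu_{p^{\infty}}(L')$ or $A(L')[p^{\infty}]$ is infinite for some finite extension $L'/K_{\chi}$, then after possibly enlarging $L'$ to be Galois over $K$ and decomposing by characters of the abelian quotient $\mrm{Gal}(L'/K)$ (which injects into $E^{\times}$ via $\chi$ modulo a finite quotient), one extracts a locally algebraic character $\psi\colon G_K\to F^{\times}$ (for some $p$-adic field $F$) that is trivial on an open subgroup of $G_{K_{\chi}}$ and has at least one strictly positive Hodge--Tate weight---namely $\psi=\chi_{\mrm{cyc}}$ (HT weight $1$) in the roots-of-unity case, and a one-dimensional sub-character of $V_p(A)^{G_{L'}}$ of HT weight $1$ in the abelian-variety case (sub-characters of HT weight $0$ are unramified, hence of finite image since $K_{\chi}/K$ has finite residue field extension, so they contribute only finite torsion). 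Applying Lemma~\ref{lem:char} and using $\tilde h=0$, the right-hand side of the identity forces
\[
\Bigl(\prod_{u\in\Gamma_{\tilde k}}\hat u\circ\chi\Bigr)^{\tilde r}=1\quad\text{on an open subgroup of $G_{Kk}$,}
\]
with $\tilde r=\sum_{\tau\in\Gamma_{\tilde k}}r_{\tau}>0$. The character $\hat\chi:=\prod_{u}\hat u\circ\chi^{\tilde r}$ is therefore of finite order on an open subgroup of $G_{Kk}$, and a fortiori of finite order on $G_{Kk}$ itself, so the Frobenius eigenvalues of $D^{Kk}_{\mrm{cris}}(\overline{\mbb{Q}}_p(\hat\chi^{-1}))$ are all roots of unity.

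The main obstacle is the final step: converting this multiplicative condition into an outright contradiction using only (H2). By Lemma~\ref{root}(2) each Frobenius eigenvalue is of the form $\bigl(\prod_{\tau\in\Gamma_E}\tau(\delta_{\chi})^{t_{\tau}}\bigr)^{f_{Kk/k}}$ with $t_{\tau}\ge 0$ and $\sum_{\tau}t_{\tau}=\tilde r\tilde d>0$. Note that (H2) forces $v_p(\delta_{\chi})=0$ via Newton--Hodge matching on the one-dimensional crystalline realisation of $\chi$, so $p$-adic valuations alone cannot supply the contradiction; the delicate point is that the relation must hold simultaneously for \emph{every} $\mbb{Q}_p$-embedding entering the product defining $\hat\chi$, and my plan is to exploit this Galois-averaged rigidity, together with the explicit algebraic description $\delta_{\chi}=\chi(\mrm{Art}_k(\pi))\cdot\chi_{\mrm{alg}}^{-1}(\pi)$ from Proposition~\ref{LA2}, to show that the relation is incompatible with $\psi$ having been extracted from \emph{infinite} $p$-power torsion (such an extraction pins down the asymptotic growth of $\chi$ on the inertia of $G_{K_{\chi}}$ in a way the relation would violate). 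Packaging this obstruction exactly as in the proof of Theorem~\ref{MTab}(2) yields the desired contradiction and hence the finiteness of $G(K_{\chi})_{\mrm{tor}}$.
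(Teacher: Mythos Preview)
Your approach has a genuine gap at the final step, and it is not merely a matter of packaging. When $\tilde h=0$, Lemma~\ref{lem:char} degenerates to a tautology: from equation~\eqref{chi} in its proof one has
\[
\prod_{u\in\Gamma_{\tilde k}}\hat u\circ\chi
=\Bigl(\prod_{\sigma\in\Gamma_{\tilde k}}\sigma^{-1}\circ\chi_{\tilde k}\Bigr)^{\tilde h}=1
\quad\text{on }I_{K'},
\]
so the right-hand side $\bigl(\prod_u\hat u\circ\chi\bigr)^{\tilde r}$ is already trivial on inertia for \emph{every} value of $\tilde r$, and (H1) then makes it of finite order on all of $G_{Kk}$. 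Thus the identity you extract carries no information about $\psi$ whatsoever; it is satisfied automatically by the hypothesis (H2) alone, and no contradiction can be produced from it. Your closing paragraph (``my plan is to exploit this Galois-averaged rigidity\dots'') is not a plan but a hope: the Frobenius eigenvalues of $D^{Kk}_{\mrm{cris}}(E'(\hat\chi^{-1}))$ being roots of unity is exactly what (H2) forces, independently of whether any $\psi$ exists, so there is nothing to violate.

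The paper's proof takes an entirely different route and does not attempt to squeeze a contradiction out of Lemma~\ref{lem:char} in the $\tilde h=0$ regime. Instead, after harmless enlargements of $k$ and $E$, it writes down an explicit family of uniformizers $\pi_n=\pi\gamma n$ of $k$ (with $\gamma\in\cO_k^{\times}$ chosen so that $\chi(\mrm{Art}_k(\pi))=\prod_\sigma\sigma^{-1}\mrm{Nr}_{k/E}(\gamma)^{h_\sigma}$, and $n$ prime to $p$) and checks, using (H2) crucially in the form $\sum_\sigma h_\sigma=0$, that the character $\chi_n$ built from the Lubin--Tate characters $\chi_{\pi_n}$ agrees with $\chi$ on all of $G_k$. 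Hence $k_\chi\subset k_{\pi_n}$ for every such $n$. Since $\mrm{Nr}_{k/\mbb{Q}_p}(\pi_n)=n^{[k:\mbb{Q}_p]}\mrm{Nr}_{k/\mbb{Q}_p}(\pi\gamma)$, one can choose $n$ so that $\chi_{\pi_n}$ avoids both (W)$'$ and ($\mu$)$'$, and then Theorem~\ref{MTab} (whose hypothesis $h\neq 0$ is satisfied by any Lubin--Tate character) finishes the argument. The point is to \emph{embed} $k_\chi$ into a larger abelian extension to which the $h\neq 0$ machinery applies, not to run that machinery directly on $\chi$.
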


\subsection{Proof of Theorem \ref{MTab}}

We show Theorem \ref{MTab}.
Throughout this section, we assume the conditions (H1) and  (H2) in the theorem. 
We have 
$
\chi = 
\prod_{\sigma \in \Gamma_E} \sigma^{-1} \circ \chi_{\sigma E}^{h_{\sigma}}
$
on an open subgroup of the inertia $I_{k\tilde E}$ for some integer $h_{\sigma}$. 
Here, $\tilde E$ is the Galois closure of $E/\mbb{Q}_p$.
Note that $\{h_{\sigma}\mid \sigma\in \Gamma_E \}$
is the set of Hodge-Tate weights of $E(\chi)$.
Thus $h:=h_{E(\chi)}$  in the condition (H2)
is equal to $\sum_{\sigma\in \Gamma_E} h_{\sigma}$.
We put $\tilde{h}:=[\tilde{k}:E]\cdot h$. 
In the rest of this section, we fix the choice of a lift 
$\hat{u}\colon \overline{\mbb{Q}}_p\to \overline{\mbb{Q}}_p$ 
for each $u\in \Gamma_{\tilde k}$.

\begin{proof}[Proof of Theorem \ref{MTab} (1)]
Assume that  $A(K_{\chi})_{\mrm{tor}}$ is infinite. 
We may assume that $A$ has good reduction over $K$.
Since $K_{\chi}$ has a finite residue field and 
the reduction map of $A$ restricted to the prime to $p$-part is injective,
we see that the prime to $p$-part of $A(K_{\chi})_{\mrm{tor}}$ 
is finite. Thus our assumption implies that 
$A(K_{\chi})[p^{\infty}]$ is infinite. 
This is equivalent to the condition that  $V_p(A)^{G_{K_{\chi}}}$ is non-zero. 
Since the $G_K$-action  on the dual $(V_p(A)^{G_{K_{\chi}}})^{\vee}$ of $V_p(A)^{G_{K_{\chi}}}$
factors through an  abelian quotient, if we take a $p$-adic field $F$ large enough,
we know that any irreducible non-zero $G_K$-stable $F$-submodule  $V$
of $(V_p(A)^{G_{K_{\chi}}})^{\vee}\otimes_{\mbb{Q}_p} F$ is 1-dimensional. 
We take such $F$ so that it contains $E$ and it is a Galois extension of $\mbb{Q}_p$.
Let $\psi\colon G_K\to GL_F(V)\simeq F^{\times}$
be the character obtained by the $G_K$-action on $V$.  
We note that $V$ is crystalline and its Hodge-Tate weights are in $\{0,-1\}$.
Replacing $K$ by a finite extension of itself and $k$, 
by (H1) and Lemma \ref{lem:char},
we have 
$$
\left(\prod_{u\in \Gamma_{\tilde{k}}} \hat{u}\circ \psi \right)^{\tilde{h}}=
\left( \prod_{u\in \Gamma_{\tilde{k}}} \hat{u}\circ \chi \right)^{\tilde{r}}
$$
on $G_K$
for some $-\tilde d\le \tilde{r} \le 0$ 
(here, we recall that $\tilde d=[\tilde{k}:\mbb{Q}_p]$). 
We set
$\hat{\chi}:=\left(\prod_{u\in \Gamma_{\tilde{k}}} \hat{u}\circ \psi \right)^{\tilde{h}}
=\left( \prod_{u\in \Gamma_{\tilde{k}}} \hat{u}\circ \chi \right)^{\tilde{r}}$.

Let $\alpha$ be a root of the characteristic polynomial  of 
the filtered $\vphi$-module $D^K_{\mrm{cris}}(F(\hat{\chi}^{-1}))$ over $K$. 
Since $\hat{\chi}$ has two decompositions 
$\left(\prod_{u\in \Gamma_{\tilde{k}}} \hat{u}\circ \psi \right)^{\tilde{h}}$
and $\left( \prod_{u\in \Gamma_{\tilde{k}}} \hat{u}\circ \chi \right)^{\tilde{r}}$,
we can study $\alpha$ from two perspectives. 
First we focus on 
$\hat{\chi}=\left(\prod_{u\in \Gamma_{\tilde{k}}} \hat{u}\circ \psi \right)^{\tilde{h}}$. 
We consider the invariant $\delta_{\psi}\in F^{\times}$ for $\psi$.
(Note that we have $\delta_{\psi^{-1}}=\delta^{-1}_{\psi}$ by definition.)
It follows from Lemma \ref{root} (2) 
(with ``$\chi:=\psi$'', ``$E'/E:=F/F$'', ``$k'/k:=K/K$'')
that $\alpha$  is of the form 
$$
\alpha=\prod_{\tau \in \Gamma_F } \tau(\delta_{\psi})^{s_{\tau}}
$$
for some integers $s_{\tau}$ such that $\sum_{\tau\in \Gamma_F} s_{\tau} = \tilde{h}\tilde d$.
Furthermore,  Lemma \ref{root} (1) 
shows that $\tau(\delta_{\psi^{-1}})$
is a  root of the characteristic polynomial  of 
the filtered $\vphi$-module $D^K_{\mrm{cris}}(F(\psi))$ over $K$. 
Since  $F(\psi)=V$ is a subquotient representation of $V_p(A)^{\vee}\otimes_{\mbb{Q}_p} F$, 
the element $\tau(\delta_{\psi^{-1}})$ is also a root of that of the filtered $\vphi$-module 
$D^K_{\mrm{cris}}(V_p(A)^{\vee})$ over $K$.
The Weil conjecture  implies that
$\tau(\delta_{\psi^{-1}})$ is a $q_K$-Weil integer  of weight $1$.
Hence $\alpha^{-1}=\prod_{\tau \in \Gamma_F } \tau(\delta_{\psi^{-1}})^{s_{\tau}}$ is  
a $q_K$-Weil number  of weight $\tilde{h} \tilde{d}$.
Note that $\alpha^{-1}$ is in fact an algebraic integer if $\tilde h> 0$ 
since we can take each $s_{\tau}$ as a non-negative integer in this case.  
On the other hand, since 
$\hat{\chi}=\left( \prod_{u\in \Gamma_{\tilde{k}}} \hat{u}\circ \chi \right)^{\tilde{r}}$,
it follows from Lemma \ref{root} (2) that $\alpha$ is of the form 
$$
\alpha=\left( \prod_{\tau \in \Gamma_E } \tau(\delta_{\chi})^{t_{\tau}} \right)^{f_{K/k}}
$$
for some integers $t_{\tau}$ such that $\sum_{\tau\in \Gamma_E} t_{\tau} = \tilde{r}\tilde d$. 
Hence we conclude that 
$
\alpha^{-1}=\left( \prod_{\tau \in \Gamma_E } \tau(\delta_{\chi})^{-t_{\tau}} \right)^{f_{K/k}}
$
is a $q_K$-Weil number  of weight $\tilde{h} \tilde{d}$, and thus
$\alpha_0:=\prod_{\tau \in \Gamma_E } \tau(\delta_{\chi})^{-t_{\tau}}$ is a 
$q$-Weil number  of weight $\tilde{h} \tilde{d}$. 
If we denote by $\tilde{E}$ the Galois closure of $E/\mbb{Q}_p$,
then we have
$$
\mrm{Nr}_{\tilde{E}/\mbb{Q}_p}(\alpha_0)
=\mrm{Nr}_{\tilde{E}/\mbb{Q}_p}(\delta_{\chi})^{\sum_{\tau\in \Gamma_E } (-t_{\tau})}
=\mrm{Nr}_{E/\mbb{Q}_p}(\delta_{\chi})^{-\tilde{r}\tilde d[\tilde{E}:E]}.
$$
On the other hand, 
$\mrm{Nr}_{\tilde{E}/\mbb{Q}_p}(\alpha_0)$
 is a $q$-Weil number  of weight $\tilde{h} \tilde{d}\cdot [\tilde E\colon \mbb{Q}_p]$. 
Since $\tilde h=[\tilde k:E]\cdot h$ is not zero by the assumption (H2),
we obtain that $\tilde r$ is not zero and 
$\mrm{Nr}_{E/\mbb{Q}_p}(\delta_{\chi})$ 
is a $q$-Weil number  of weight 
$-\tilde r^{-1}\tilde{h} \cdot [\tilde{E}:E]^{-1}[\tilde E\colon \mbb{Q}_p]
=-\tilde r^{-1}h \tilde{d}$. 
Furthermore, $\mrm{Nr}_{E/\mbb{Q}_p}(\delta_{\chi})$  is an algebraic integer if $\tilde h> 0$
since so is $\alpha_0$. 
This contradicts the assumption (W)'.
\end{proof}

\begin{proof}[Proof of Theorem \ref{MTab} (2)]
By (1) and Corollary \ref{keycor}, 
it suffices to show that the set $\mu_{\infty}(K_{\chi})$ is finite 
for any finite extension $K$ of $k$.
Assume that  $\mu_{\infty}(K_{\chi})$ is infinite 
for some finite extension $K$ of $k$.
By the assumption (H1), we know that the prime-to-$p$ part of 
$\mu_{\infty}(K_{\chi})$ is finite. Thus the $p$-part of $\mu_{\infty}(K_{\chi})$ is infinite. 
This implies that $K_{\chi}$ contains all $p$-power roots of unity, that is,
the $p$-adic cyclotomic character $\chi_p\colon G_K\to \mbb{Q}_p^{\times}$
is trivial on $G_{K_{\chi}}$.
Applying Lemma \ref{lem:char} and replacing $K$ by a finite extension, 
we have 
$
\left(\prod_{u\in \Gamma_{\tilde{k}}} \hat{u}\circ \chi_p \right)^{\tilde{h}}=
\left( \prod_{u\in \Gamma_{\tilde{k}}} \hat{u}\circ \chi \right)^{\tilde d}
$
on $G_K$. 
This implies 
$
\chi_p^{\tilde d \tilde h}=
\left( \prod_{u\in \Gamma_{\tilde{k}}} \hat{u}\circ \chi \right)^{\tilde d}
$
on $G_K$. 
Set $\hat{\chi}:=\left( \prod_{u\in \Gamma_{\tilde{k}}} \hat{u}\circ \chi \right)^{\tilde d}$.
The characteristic polynomial  of 
the filtered $\vphi$-module $D^K_{\mrm{cris}}(F(\chi_p^{-\tilde d \tilde h}))$ over $K$
has a unique root 
$p^{\tilde d \tilde h f_{K/\mbb{Q}_p} }=q^{\tilde d \tilde h f_{K/k} }$.
As we have seen in the proof of Theorem \ref{MTab} (1), 
if $\alpha$ is a root of the characteristic polynomial  of 
the filtered $\vphi$-module $D^K_{\mrm{cris}}(F(\hat{\chi}^{-1}))$ over $K$,
we have 
$
\alpha=\left( \prod_{\tau \in \Gamma_E } \tau(\delta_{\chi})^{t_{\tau}} \right)^{f_{K/k}}
$
for some integers $t_{\tau}$ such that $\sum_{\tau\in \Gamma_E} t_{\tau} = \tilde d^2$. 
Since $\chi_p^{\tilde d \tilde h}=\hat{\chi}$ on $G_K$, we obtain 
$$
\left( \prod_{\tau \in \Gamma_E } \tau(\delta_{\chi})^{t_{\tau}} \right)^{f_{K/k}}
=q^{\tilde d \tilde h f_{K/k} }.
$$
Denote by $\tilde E$ the Galois closure of $E/\mbb{Q}_p$.
Taking $\mrm{Nr}_{\tilde E/\mbb{Q}_p}$ to both sides of the above equality,
we have 
$$
\mrm{Nr}_{E/\mbb{Q}_p}(\delta_{\chi})^{[\tilde E:E] \tilde d^2 f_{K/k}}
=q^{[\tilde E:\mbb{Q}_p]\tilde d \tilde h f_{K/k}}.
$$
However, this contradicts the assumption that ($\mu$)' does not hold.
\end{proof}

\subsection{Proof of Theorem \ref{MTab2}}

We show Theorem \ref{MTab2}.
We assume the conditions (H1) and  (H2) in the theorem. 
Replacing $E$ by a finite extension, 
we may assume that $E$ is a Galois extension of $\mbb{Q}_p$.
Also, replacing $k$ by a finite extension, we may assume 
that $k$ contains $E$, $k$ is a Galois extension of $\mbb{Q}_p$ and 
$\chi=\prod_{\sigma\in \Gamma_E} \sigma^{-1}\chi_E^{h_{\sigma}}$ on $I_k$. 
Note that we have  $\sum_{\sigma\in \Gamma_E} h_{\sigma}=0$
by (H2).
Let $k'$ be the maximal unramified extension of $k$ contained in $k_{\chi}$.
Since the residue field of $k_{\chi}$ is finite,
we know that $k'$ is a finite extension of $k$
and we have $\chi(G_{k'})=\chi(I_{k'})$.
Furthermore, we see that $k'$  is a Galois extension of $\mbb{Q}_p$
since $k'/k$ is unramified and $k/\mbb{Q}_p$ is a Galois extension. 
Hence, replacing $k$ by a finite extension again, 
we may assume $\chi(G_{k})=\chi(I_{k})$.
We regard $\chi$ as a character of $G^{\mrm{ab}}_k$.
Take a uniformizer $\pi$ of $k$.
Since we have $\chi(G_{k})=\chi(I_{k})$ and 
$\chi=\prod_{\sigma\in \Gamma_E} \sigma^{-1}\circ \chi_E^{h_{\sigma}}$ on $I_k$,
there exists an element $\gamma\in \cO_k^{\times}$ such that 
$$
\chi\circ \mrm{Art}_k(\pi)=\prod_{\sigma \in \Gamma_E} \sigma^{-1}\mrm{Nr}_{k/E}(\gamma)^{h_{\sigma}}.
$$
For any integer $n$ prime to $p$,
put $\pi_n=\pi \gamma n$.
Then $\pi_n$ is an uniformizer of $k$.
Let $\chi_{\pi_n}\colon G_k\to k^{\times}$ be the Lubin-Tate character associated with $\pi_n$
and  we set 
$$
\chi_n:=\prod_{\sigma\in \Gamma_E}
\left( \prod_{\tau\in \Gamma_k, \tau|_E=\sigma}  
\tau^{-1} \circ \chi_{\pi_n} \right)^{h_{\sigma}}\colon G_k\to k^{\times}.
$$
We regard $\chi_{\pi_n}$ and $\chi_n$ 
as characters of $G^{\mrm{ab}}_k$.
By definition, we have $\chi_{\pi_n}\circ \mrm{Art}_k(\pi)=\gamma n$.
Since $\sum_{\sigma\in \Gamma_E} h_{\sigma}=0$, 
we have 
\begin{align*}
\chi_n\circ \mrm{Art}_k(\pi) 
& = \prod_{\sigma\in \Gamma_E}
\left( \prod_{\tau\in \Gamma_k, \tau|_E=\sigma }  
\tau^{-1}(\gamma n) \right)^{h_{\sigma}}
= \prod_{\sigma\in \Gamma_E} 
\sigma^{-1} \mrm{Nr}_{k/E}(\gamma n)^{h_{\sigma}} \\
& =\prod_{\sigma\in \Gamma_E}
\sigma^{-1} \mrm{Nr}_{k/E}(\gamma)^{h_{\sigma}}
=\chi\circ \mrm{Art}_k(\pi).
\end{align*}
Furthermore, we  see 
$\prod_{\tau\in \Gamma_k, \tau|_E=\sigma}  \tau^{-1}\circ \chi_{\pi_n}
=\sigma^{-1}\circ \chi_E$  on $I_k$ and  thus we obtain $\chi_n=\chi$ on $I_k$.
We conclude that $\chi_n=\chi$ on $G_k$ by local class field theory.
In particular,  $k_{\chi}$ is a subfield of $k_{\pi_n}$ for  every integer $n$ prime to $p$.
Since 
$\mrm{Nr}_{k/\mbb{Q}_p}(\delta_{\chi_{\pi_n}})
=\mrm{Nr}_{k/\mbb{Q}_p}(\pi_n)=n^{[k:\mbb{Q}_p]}\mrm{Nr}_{k/\mbb{Q}_p}(\pi \gamma)$,
we can choose $n$ so  that 
neither (W)' nor ($\mu$)' in the statement of Theorem \ref{MTab}  for $\delta_{\chi_{\pi_n}}$
holds.
Therefore, Theorem \ref{MTab} shows that 
$k_{\chi}$ satisfies the desired property. This completes the proof.

\section{The field $\widetilde{K}$}
\label{Mainsection}

We use the same notation $k,\pi,\phi,\dots , \widetilde{K}$ as in the Introduction.
In this section, we study some basic properties 
of the field $\widetilde{K}$.
Furthermore, applying results in the previous section to  Lubin-Tate characters,
we show theorems given in the Introduction. 
The theory of Lubin-Tate formal groups plays a key role here. 
It may be helpful for the readers to refer 
\cite{Iw}, \cite{La}  and \cite{Yo}
for standard properties of Lubin-Tate formal groups.

\subsection{Formal groups and  $\widetilde{K}$}

Let $F_{\phi}=F_{\phi}(X,Y)\in \cO_k[\![X,Y]\!]$ 
be the formal $\cO_k$-module corresponding to $\phi$,
and denote by $[\cdot]_{\phi}\colon \cO_k\to \mrm{End}_{\cO_k}(F_{\phi})$
the ring homomorphism corresponding to the  $\cO_k$-action on $F_{\phi}$.
Note that  we have $[\pi]_{\phi}=\phi$.
We also note that, for any algebraic extension $L$ of $k$,  
$F_{\phi}(\mbf{m}_L)=\mbf{m}_L$ 
is equipped with an $\cO_k$-module structure
via $F_{\phi}$, that is, 
$x\oplus  y:=F_{\phi}(x,y)$ and $a.x:=[a]_{\phi}(x)$
for $x,y\in F_{\phi}(\mbf{m}_L)$ and $a\in \cO_k$.
By definition, $k_{\pi}$ is the extension field of $k$ obtained by adjoining 
all $\pi$-power torsion points of $F_{\phi}$. 
The isomorphism class of the formal  $\cO_k$-module $F_{\phi}$
depends not on $\phi$ but on $\pi$, and thus the field
$k_{\pi}$ is independent of the choice of $\phi$.
It follows from local class field theory that $k_{\pi}$ is 
a totally ramified abelian extension of $k$, 
and the composite field of $k_{\pi}$ and the maximal 
unramified extension $k^{\mrm{ur}}$ of $k$ 
coincides with the maximal abelian extension $k^{\mrm{ab}}$ of $k$. 
The set $F_{\phi}[\pi^n]_{\phi}$ of $\pi^n$-torsion points of $F_{\phi}$
is a free $\cO_k/\pi^n\cO_k$-module of rank one and 
$T_{\pi}:=\plim F_{\phi}[\pi^n]_{\phi}$ is a free $\cO_k$-module of rank one.
The Galois group $G_k$ acts on $T_{\pi}$ by the Lubin-Tate character
$\chi_{\pi}\colon G_k\to \cO_k^{\times}$. 
If we regard $\chi_{\pi}$ as a continuous 
character $k^{\times}\to k^{\times}$ 
by the local Artin map with arithmetic normalization,
then $\chi_{\pi}$ is characterized by the property that $\chi_{\pi}(\pi)=1$
and $\chi_{\pi}(u)=u^{-1}$ for any $u\in \cO_k^{\times}$.
For any $a\in \mbf{m}_K$, we denote by $K_{\phi,a}$ the 
extension filed of $K$ obtained by adjoining to $K$ all $x\in \mbf{m}_{\overline{K}}$ 
such that $\phi^n(x)=a$ for some $n\ge 1$.  
We see that the field  $\widetilde{K}$ given in the Introduction  is equal to 
the composite field of  all $K_{\phi,a}$ for all $a\in \mbf{m}_K$.

\begin{proposition}
\label{tildeK}
{\rm (1)} $K_{\phi,0}=Kk_{\pi}$. 

\noindent
{\rm (2)} $\widetilde{K}$ does not depend on the choice of $\phi$. Thus 
the field $\widetilde{K}$ is determined by $K,k$ and $\pi$. 
\end{proposition}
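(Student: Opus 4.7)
The plan is that part (1) is essentially a matter of unraveling the definition, while part (2) follows by transporting the defining condition across the canonical Lubin--Tate isomorphism between $F_\phi$ and $F_{\phi'}$. For (1), observe that $\phi=[\pi]_\phi$ gives $\phi^n=[\pi^n]_\phi$, so the elements $x \in \mbf{m}_{\overline{K}}$ with $\phi^n(x)=0$ for some $n\ge 1$ are exactly the $\pi$-power torsion points of $F_\phi$. Since $k_\pi$ was introduced (just before the proposition) as the field obtained from $k$ by adjoining all such torsion points, adjoining them instead to $K$ yields $K_{\phi,0}=Kk_\pi$.

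For (2), write $\widetilde{K}_\phi$ and $\widetilde{K}_{\phi'}$ for the fields built from two choices $\phi,\phi'$ satisfying the same congruences. By standard Lubin--Tate theory, there exists a unique power series $f \in \cO_k[\![X]\!]$ with $f(X) \equiv X \bmod X^2$ inducing an isomorphism of formal $\cO_k$-modules $F_\phi \simeq F_{\phi'}$; in particular $f\circ \phi = \phi' \circ f$, and the inverse series $g := f^{-1}$ lies in $\cO_k[\![X]\!]$ and satisfies $g\circ \phi' = \phi \circ g$. Because $f(X),g(X) \in X\cO_k[\![X]\!]$, their substitutions define mutually inverse bijections $\mbf{m}_{\overline{K}} \to \mbf{m}_{\overline{K}}$, each restricting to a bijection $\mbf{m}_K \to \mbf{m}_K$.

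Using this, I would show that each generator of $\widetilde{K}_\phi$ lies in $\widetilde{K}_{\phi'}$ and conclude by symmetry. For $x \in \mbf{m}_{\overline{K}}$ with $\phi^n(x)\in K$, iterating $f\circ\phi=\phi'\circ f$ gives $(\phi')^n(f(x))=f(\phi^n(x))\in \mbf{m}_K$, so $y:=f(x)$ is a generator of $\widetilde{K}_{\phi'}$; moreover $y = f(x) \in K(x)$ and $x = g(y) \in K(y)$, so $K(x)=K(y)\subseteq \widetilde{K}_{\phi'}$. I expect no serious obstacle: the only point requiring care is the analytic verification that $f$ converges on $\mbf{m}_{\overline{K}}$ and that the composition identity $f\circ\phi=\phi'\circ f$ of formal series really holds as an identity of functions on $\mbf{m}_{\overline{K}}$, but this is routine for power series with coefficients in $\cO_k$ evaluated at elements of positive valuation.
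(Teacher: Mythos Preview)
Your proof is correct and follows essentially the same approach as the paper. For (1) both arguments simply unwind $\phi^n=[\pi^n]_\phi$, and for (2) both use the canonical Lubin--Tate isomorphism $\theta\colon F_\phi\overset{\sim}{\to}F_{\phi'}$ (your $f$) together with its inverse in $X\cO_k[\![X]\!]$ to transport generators; the paper phrases this as $K_{\phi,a}=K_{\phi',\theta(a)}$ for each $a\in\mbf{m}_K$, while you work directly with the full field $\widetilde{K}$, but the mechanism is identical.
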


\begin{proof}
(1) The  result follows immediately from the equation 
$[\pi^n]_{\phi}(X)=\phi^n(X)$.

\noindent
(2) Take any $\phi'=\phi'(X)\in \cO_k[\![X]\!]$ with the property that 
$\phi'(X)\equiv X^q\ \mrm{mod}\ \pi$ 
and  
$\phi'(X)\equiv \pi X \ \mrm{mod}\ X^2$.
Let $F_{\phi'}=F_{\phi'}(X,Y)\in \cO_k[\![X,Y]\!]$ 
be the formal $\cO_k$-module corresponding to $\phi'$.
By \cite[Chapter 8, Theorem 3.1]{La}, 
there exists an isomorphism of formal $\cO_k$-modules 
$\theta\colon F_{\phi}\overset{\sim}{\rightarrow}  F_{\phi'}$.
This $\theta$ is an element of $X\cO_k[\![X]\!]$  and 
satisfies $\theta(X)\equiv X\ \mrm{mod}\ X^2$.
Note that there exists a unique $\theta^{-1}\in X\cO_k[\![X]\!]$ such that 
$\theta \circ \theta^{-1}=\theta^{-1} \circ \theta=X$ and 
$\theta^{-1}$ is an inverse of $\theta\colon F_{\phi}\overset{\sim}{\rightarrow}  F_{\phi'}$.
For the proof, it is enough to show $K_{\phi,a}=K_{\phi',\theta(a)}$ for any $a\in \mbf{m}_K$.
Take any $x\in \mbf{m}_{\overline{K}}$ such that $\phi^n(x)=a$ for some $n>0$.
Put $y=\theta(x)$. Then, $y\in  \mbf{m}_{\overline{K}}$ and we have  
$$
\theta(a)=\theta\circ \phi^n(x)=\theta\circ [\pi^n]_{\phi}(x)=[\pi^n]_{\phi'}\circ \theta(x)
=[\pi^n]_{\phi'}(y)=\phi'^{n}(y).
$$
Hence we have $y\in K_{\phi',\theta(a)}$.
Since $\theta^{-1}$ is an element of $\cO_k[\![X]\!]$, 
we have $x=\theta^{-1}(y)\in K_{\phi',\theta(a)}$. 
This shows $K_{\phi,a}\subset K_{\phi',\theta(a)}$.
The converse inclusion $K_{\phi,a}\supset K_{\phi',\theta(a)}$ follows by a similar argument.
\end{proof}

\begin{example}
\label{basicex}
Suppose $K=\mbb{Q}_p$ and $\pi=p>2$.
We check that the field $\widetilde{K}$ is just $K((\cO^{\times}_K)^{1/p^{\infty}})$. 
Choosing  $\phi(X)$ to be $(1+X)^p-1$,  
we see that  
$\widetilde{K}=K((1+\mbf{m}_K)^{1/p^{\infty}})$,
which is a subfield of 
$K((\cO^{\times}_K)^{1/p^{\infty}})$. 
Take any integer $n>0$ and any $\alpha_n$ such that $\alpha:=\alpha_n^{p^n}$
is an element of $\cO^{\times}_K$.
Since $p>2$, we have $\alpha=\zeta (1+\alpha')$ 
for some $\zeta\in \mu_{q_K-1}$
and $\alpha'\in \mbf{m}_K$.
Since $p^n$-th roots of $\zeta$ and  $1+\alpha'$ are elements of 
$\mu_{q_K-1}$ and $K_{\phi,\alpha'}$, respectively,
we see $\alpha_n \in \widetilde{K}$. 
This shows $K((\cO^{\times}_K)^{1/p^{\infty}})\subset \widetilde{K}$. 
\end{example}

We set $G:=\mrm{Gal}(\widetilde{K}/K)$ and $H:=\mrm{Gal}(\widetilde{K}/Kk_{\pi})$.
We often regard 
the Lubin-Tate character  $\chi_{\pi}$ as characters of 
$G_K,G$ and $G/H\simeq \mrm{Gal}(Kk_{\pi}/K)$.

\begin{lemma}
\label{Hlem}
Let  $\sigma \in G$. Assume that $\chi_{\pi}(\sigma)$ is a rational integer.
Then we have $\sigma \tau \sigma^{-1}=\tau^{\chi_{\pi}(\sigma)}$ for any $\tau\in H$.
\end{lemma}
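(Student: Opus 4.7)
Since $\widetilde{K}/K$ is generated by the elements $x \in \mbf{m}_{\overline{K}}$ satisfying $\phi^n(x) \in K$ for some $n \ge 1$, it suffices to verify $\sigma\tau\sigma^{-1}(x) = \tau^{\chi_{\pi}(\sigma)}(x)$ on any such $x$. The plan is to translate the action of $H$ into the formal-group language: elements of $H$ shift $x$ by a $\pi^n$-torsion point of $F_{\phi}$, and conjugation by $\sigma$ turns this shift into the $G_K$-action on torsion, which is governed by $\chi_{\pi}$.

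Concretely, I would fix $x$ with $a := \phi^n(x) \in K$ and, for $\tau \in H$, introduce the Kummer shift $t_{\tau}(x) := \tau(x) \ominus x$; this lies in $F_{\phi}[\pi^n]_{\phi}$ because $\phi^n(\tau(x)) = \tau(a) = a$. Since every $\pi^n$-torsion point of $F_{\phi}$ lies in $Kk_{\pi}$ and is therefore fixed by $\tau \in H$, a short induction (with $\tau^{-1}(x) = x \oplus [-1]_{\phi}(t_{\tau}(x))$ handling negative exponents) yields the key identity
\[
\tau^m(x) = x \oplus [m]_{\phi}(t_{\tau}(x)), \qquad m \in \mbb{Z}.
\]

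Next I would compute $\sigma\tau\sigma^{-1}(x)$ directly. Because $a \in K$ is fixed by $\sigma$, we have $\phi^n(\sigma^{-1}(x)) = a$, so $\sigma^{-1}(x) = x \oplus c$ for some $c \in F_{\phi}[\pi^n]_{\phi}$. Applying $\tau$ (which fixes $c$) gives $\tau\sigma^{-1}(x) = \tau(x) \oplus c = \sigma^{-1}(x) \oplus t_{\tau}(x)$; applying $\sigma$ and using that $G_K$ acts on $\pi^n$-torsion by $\chi_{\pi}$ yields
\[
\sigma\tau\sigma^{-1}(x) = x \oplus \sigma(t_{\tau}(x)) = x \oplus [\chi_{\pi}(\sigma)]_{\phi}(t_{\tau}(x)).
\]
Combined with the displayed identity above, evaluated at $m = \chi_{\pi}(\sigma) \in \mbb{Z}$, this is exactly $\tau^{\chi_{\pi}(\sigma)}(x)$, as desired.

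The only point requiring attention is that the hypothesis $\chi_{\pi}(\sigma) \in \mbb{Z}$ is precisely what allows the $[\chi_{\pi}(\sigma)]_{\phi}$-multiplication on torsion to be identified with the integer power $\tau^{\chi_{\pi}(\sigma)}$ in the abstract group $H$. For a general value $\chi_{\pi}(\sigma) \in \cO_k^{\times}$ one would need $H$ to carry a continuous $\cO_k$-module structure (which is in fact true in this setting), but no such structure is needed for the present claim.
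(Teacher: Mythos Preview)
Your proof is correct and follows essentially the same approach as the paper: both introduce the Kummer shift $t_{\tau}(x)=\tau(x)\ominus x\in F_{\phi}[\pi^n]_{\phi}$, establish the power formula $\tau^m(x)=x\oplus[m]_{\phi}(t_{\tau}(x))$, and then compute $\sigma\tau\sigma^{-1}(x)$ directly using that $G$ acts on $\pi^n$-torsion by $\chi_{\pi}$ while $H$ acts trivially. Your presentation is slightly more streamlined in that you write $\sigma^{-1}(x)=x\oplus c$ without needing the auxiliary identity $x(\sigma^{-1})=\ominus\,\sigma^{-1}x(\sigma)$ that the paper records, but the argument is otherwise the same.
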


\begin{proof}
For the proof, it suffices to show 
$$
\sigma \tau \sigma^{-1}x=\tau^{\chi_{\pi}(\sigma)}x
$$
for any $\tau\in H$ and $x\in \mbf{m}_{\overline{K}}$ with $\phi^n(x)\in \mbf{m}_K$ for some $n>0$.
We set $x(\rho):=\rho x\ominus  x$ for any $\rho\in G$. It is not difficult to check that 
$x(\rho)\in F_{\phi}[\pi^n]_{\phi}$, 
$x(\rho)\oplus  \rho x(\rho^{-1})=0$ and 
$[m]_{\phi} x(\tau) \oplus  x= \tau^m x$ for any $m\in \mbb{Z}$. 
Furthermore, we note that $G$ acts on $F_{\phi}[\pi^n]_{\phi}$ by $\chi_{\pi}$, 
$H$ acts  trivially on  $F_{\phi}[\pi^n]_{\phi}$ and 
$H$ is a normal subgroup of $G$.
Therefore,  we have
\begin{align*}
\sigma \tau \sigma^{-1}x
& = 
\sigma \tau (x\oplus  x(\sigma^{-1}))
= 
\sigma \tau (x\ominus  \sigma^{-1}x(\sigma)) \\
& = 
\sigma \tau x\ominus  \sigma \tau \sigma^{-1}x(\sigma) 
= 
\sigma (x\oplus  x(\tau))\ominus  x(\sigma)  \\
& = 
\sigma x(\tau)  \oplus   (\sigma x \ominus  x(\sigma) )
 =  [\chi_{\pi}(\sigma)]_{\phi} x(\tau) \oplus  x\\
& =
 \tau^{\chi_{\pi}(\sigma)} x
\end{align*}
as desired.
\end{proof}

In particular, we see that $H$ is abelian
since $\chi_{\pi}|_H$ is trivial. 
We study more precise information about $H$ in the next section.

The following two propositions are essentially shown by Kubo and Taguchi 
(cf.\ \cite[Lemmas 2.2 and 2.3]{KT}) but we include a proof for the sake of completeness.

\begin{proposition} 
\label{unipotent}
Let $E$ be a topological field and 
$\rho\colon G\to GL_E(V)$ an continuous  $E$-linear representation of $G$ of dimension $n$.

\noindent
(1) There exists a integer $m>0$ depending only on $K/k$, $\pi$ and $n$ such that 
$H^m$ acts unipotently on $V$.

\noindent
(2) There exists a finite index subgroup $H'$ of $H$ such that 
$H'$ acts unipotently on $V$.  
\end{proposition}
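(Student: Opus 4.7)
The plan is to invoke Lemma \ref{Hlem} with a $\sigma\in G$ whose value $\chi_{\pi}(\sigma)$ is a rational integer at least $2$. Since $\widetilde{K}\supset Kk_{\pi}$, the character $\chi_{\pi}$ factors through $G$, and by local class field theory its image $\chi_{\pi}(G)=\chi_{\pi}(G_K)$ is an open subgroup of $\cO_k^{\times}$ (of index $[K\cap k_{\pi}:k]$). Hence $\chi_{\pi}(G)\supset 1+p^N\cO_k$ for some $N=N(K/k,\pi)\ge 1$, and in particular the rational integer $a:=1+p^N\ge 2$ lies in $\chi_{\pi}(G)$. Pick $\sigma\in G$ with $\chi_{\pi}(\sigma)=a$; by Lemma \ref{Hlem} we then have $\sigma\tau\sigma^{-1}=\tau^{a}$ for every $\tau\in H$.

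For (1), applying $\rho$ gives $\rho(\sigma)\rho(\tau)\rho(\sigma)^{-1}=\rho(\tau)^{a}$, so $\rho(\tau)$ and $\rho(\tau)^{a}$ share the same characteristic polynomial. Writing $\lambda_1,\ldots,\lambda_n\in\overline{E}$ for the eigenvalues (with multiplicity) of $\rho(\tau)$, the assignment $\lambda_i\mapsto\lambda_i^{a}$ induces a permutation of the multiset $\{\lambda_1,\ldots,\lambda_n\}$, so iterating $n!$ times yields $\lambda_i^{a^{n!}}=\lambda_i$, i.e.\ $\lambda_i^{m}=1$ for $m:=a^{n!}-1$. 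Hence $\rho(\tau^m)=\rho(\tau)^{m}$ has all eigenvalues equal to $1$ and is unipotent; as this is valid for every $\tau\in H$, the subgroup $H^m$ acts unipotently on $V$, and by construction $m$ depends only on $K/k$, $\pi$ and $n$.

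For (2), recall $H$ is abelian (as noted right after Lemma \ref{Hlem}), so $\rho(H)$ is a commuting family in $GL_E(V)$. After extending scalars to $\overline{E}$ we may simultaneously upper-triangularize $\rho(H)$; the diagonal entries furnish group homomorphisms $\chi_1,\ldots,\chi_n\colon H\to \overline{E}^{\times}$, and (1) forces each to take values in $\mu_m$. Thus $[H:\ker\chi_i]\le m$ for each $i$, so $H':=\bigcap_{i=1}^{n}\ker\chi_i$ has finite index in $H$ (with $[H:H']\le m^{n}$), and every $\rho(\tau)$ for $\tau\in H'$ is upper triangular with $1$'s on the diagonal, thus unipotent (the property $(T-I)^{n}=0$ being preserved under base change). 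The only delicate step is securing the openness of $\chi_{\pi}(G)$ in $\cO_k^{\times}$, which is what allows us to extract an integer $a\ge 2$ on which to apply Lemma \ref{Hlem}; the remainder is a routine eigenvalue/permutation argument combined with simultaneous trigonalization.
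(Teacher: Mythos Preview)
Your proof is correct and follows essentially the same approach as the paper. The only cosmetic differences are that the paper takes $m$ to be the least common multiple of $(1+p^c)^r-1$ for $1\le r\le n$ rather than your $a^{n!}-1$, and in (2) phrases the argument in terms of the semisimplification of $V\otimes_E E'$ over a finite extension $E'/E$ rather than simultaneous triangularization over $\overline{E}$; both yield the same conclusion by the same mechanism.
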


\begin{proof}
(1) Take any $\tau \in H$. Let $\lambda_1,\dots ,\lambda_n$ be the eigenvalues of $\rho(\tau)$.
Since $\chi_{\pi}\colon G\to \cO_k^{\times}$ has an open image,
there exists an integer $c\ge 0$ such that $1+p^c=\chi_{\pi}(\sigma)$ for some $\sigma \in G$.
Note that the choice of $c$ depends only on $K/k$ and  $\pi$.
By Lemma \ref{Hlem}, we have $\rho(\sigma \tau \sigma^{-1})=\rho(\tau)^{1+p^c}$.
This gives the equality  $\{\lambda_1,\dots ,\lambda_n\}=\{\lambda_1^{1+p^c},\dots ,\lambda_n^{1+p^c}\}$
as multisets of $n$-elements.
 In other words, the multiplication-by-$(1+p^c)$ gives a permutation on 
the multiset $\{\lambda_1,\dots ,\lambda_n\}$.
Hence, for any $1\le i\le n$, there exists an integer $1\le r\le n$ such that 
$\lambda_i^{(1+p^c)^r}=\lambda_i$.
If we denote by $m$ the least common multiple of integers 
$(1+p^c)^r-1$ for $1\le r\le n$, 
then we have $\lambda_i^m=1$ for any $i$.
This shows that $\tau^m$ acts unipotently on $V$. 
Since $m$ depends only on $K/k$, $\pi$ and $n$, we obtained the desired result.

\noindent
(2) Since $H$ is abelian, if we take a finite extension $E'$ of $E$ large enough, 
the semisimplification of the restriction of $V\otimes_E E'$ to $H$ is a direct sum of 
characters $H\to E'^{\times}$. It follows from (1) that these characters have finite images.
Thus the result follows.
\end{proof}

We denote by $\mbb{F},\mbb{F}_{\pi}$ and $\widetilde{\mbb{F}}$ the residue fields of 
$K$, $Kk_{\pi}$ and $\widetilde{K}$, respectively. 
Note that $\mbb{F}_{\pi}$ is a finite field since $k_{\pi}/k$ is totally ramified. 
Let $c=c(K/k,\pi)$ be the minimum integer $t\ge 0$ so that $1+p^t\in \chi_{\pi}(G_K)$
(such $c$ exists since $\chi_{\pi}\colon G_K\to \cO_k^{\times}$ has an open image).

\begin{proposition}
\label{finres}
The residue field $\widetilde{\mbb{F}}$ of $\widetilde{K}$ is finite. 
Moreover, the extension degree $[\widetilde{\mbb{F}}:\mbb{F}]$ is
a divisor of  $p^{c}[\mbb{F}_{\pi}:\mbb{F}]$.
\end{proposition}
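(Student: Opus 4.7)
The plan is to leverage Lemma \ref{Hlem} at the level of residue fields. First I would pick $\sigma\in G$ with $\chi_{\pi}(\sigma)=1+p^{c}$, which exists by the definition of $c$ together with the fact that $\chi_{\pi}$ on $G_K$ factors through $G$ (since $Kk_{\pi}\subset\widetilde K$). Because $1+p^{c}\in\mbb Z$, Lemma \ref{Hlem} supplies the relation
\[
\sigma\tau\sigma^{-1}=\tau^{1+p^{c}}\qquad\text{for every }\tau\in H.
\]

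Next I would pass to the residue field. The natural map $G\to\mrm{Gal}(\widetilde{\mbb F}/\mbb F)$ is surjective (this is the standard surjectivity for Galois extensions of Henselian fields, obtained as an inverse limit of the analogous statement for finite Galois subextensions of $\widetilde K/K$). Its target is a quotient of $\mrm{Gal}(\overline{\mbb F}/\mbb F)\simeq\hat{\mbb Z}$, hence abelian, so the image of the commutator relation collapses to $\bar\tau=\bar\tau^{1+p^{c}}$, i.e.\ $\bar\tau^{p^{c}}=1$ for every $\tau\in H$.

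Since every $\tau\in H$ fixes $Kk_{\pi}$, the restrictions $\bar\tau$ lie in $\mrm{Gal}(\widetilde{\mbb F}/\mbb F_{\pi})$, and $H$ surjects onto this subgroup by the same Henselian argument applied to the tower $\widetilde K/Kk_{\pi}$. The previous paragraph therefore forces every element of $\mrm{Gal}(\widetilde{\mbb F}/\mbb F_{\pi})$ to have order dividing $p^{c}$. Since this group is a closed quotient of $\hat{\mbb Z}$ and $\hat{\mbb Z}/p^{c}\hat{\mbb Z}\simeq\mbb Z/p^{c}\mbb Z$, it has order dividing $p^{c}$. Multiplying by $[\mbb F_{\pi}:\mbb F]$ gives the stated divisibility of $[\widetilde{\mbb F}:\mbb F]$, and in particular $\widetilde{\mbb F}$ is finite.

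I foresee no serious obstacle: the only mildly subtle point is the surjectivity of $G\to\mrm{Gal}(\widetilde{\mbb F}/\mbb F)$ for a possibly infinite algebraic extension $\widetilde K/K$, which is handled by a projective limit over its finite Galois subextensions. After that reduction, the argument is a purely group-theoretic manipulation made possible because Lemma \ref{Hlem} gives an integer-valued conjugation identity while the target $\mrm{Gal}(\widetilde{\mbb F}/\mbb F)$ is abelian.
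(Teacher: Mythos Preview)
Your argument is correct and follows essentially the same route as the paper: pick $\sigma$ with $\chi_{\pi}(\sigma)=1+p^{c}$, use Lemma~\ref{Hlem} to see that $\tau^{p^{c}}$ is a commutator for each $\tau\in H$, and then pass to an abelian quotient to conclude that $\mrm{Gal}(\widetilde{\mbb F}/\mbb F_{\pi})$ is killed by $p^{c}$. The only cosmetic difference is that the paper routes this through the closure $\overline{(G,G)}$ (equivalently, through the maximal abelian subextension $M$ of $\widetilde K/K$) before descending to the residue field, whereas you apply abelianness directly at the level of $\mrm{Gal}(\widetilde{\mbb F}/\mbb F)$; the paper's detour has the side benefit of also yielding the bound in Corollary~\ref{finab}.
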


\begin{proof}
Take $\sigma\in G$ such that $1+p^c=\chi_{\pi}(\sigma)$.
By Lemma \ref{Hlem}, we have 
$\tau^{p^c}=\sigma \tau \sigma^{-1}\tau^{-1}\in (G,G)$ for any $\tau\in H$.
On the other hand, the closure $\overline{(G,G)}$ 
of $(G,G)$ in $G$ is the Galois group $\mrm{Gal}(\widetilde{K}/M)$
where $M$ is the maximal abelian extension of $K$ contained in $\widetilde{K}$. 
Thus we have $H^{p^c}\subset \overline{(G,G)}=\mrm{Gal}(\widetilde{K}/M)\subset H$. 
This gives natural surjections 
$$
H/H^{p^c}\twoheadrightarrow H/\overline{(G,G)}=\mrm{Gal}(M/Kk_{\pi})
\twoheadrightarrow \mrm{Gal}(\widetilde{\mbb{F}}/\mbb{F}_{\pi}).
$$
In particular, the Galois group $\mrm{Gal}(\widetilde{\mbb{F}}/\mbb{F}_{\pi})$
is killed by $p^c$ and hence
$\mrm{Gal}(\widetilde{\mbb{F}}/\mbb{F})$
is killed by $p^cf$ where $f:=[\mbb{F}_{\pi}:\mbb{F}]$. 
Thus the surjection 
$\widehat{\mbb{Z}}\simeq \mrm{Gal}(\overline{\mbb{F}}/\mbb{F})
\twoheadrightarrow 
\mrm{Gal}(\widetilde{\mbb{F}}/\mbb{F})$ factors through
$\widehat{\mbb{Z}}/p^c f\widehat{\mbb{Z}}\simeq 
\mbb{Z}/p^cf\mbb{Z}$.  This finishes the proof.
\end{proof}

\begin{remark}
(1) If $p\not=2$ and $K=k$, then $\widetilde{K}/K$ is totally ramified. 
In fact, we have $c=0$ and $\mbb{F}_{\pi}=\mbb{F}$ in this case.

\noindent
(2)
Let $e_{K/k}$ be the ramification index of $K/k$.
Since $k_{\pi}/k$ is totally ramified, we see the inequality $[\mbb{F}_{\pi}:\mbb{F}]\le e_{K/k}$.
Hence we obtain $[\widetilde{\mbb{F}}:\mbb{F}]\le p^c e_{K/k}$. 
\end{remark}

\subsection{The Galois group of  $\widetilde{K}/Kk_{\pi}$}

The goal of this section is to show the following.

\begin{theorem}
\label{structure:Ktilde}
$\widetilde{K}$ is a 
$\mbb{Z}_p^{\oplus [K:\mbb{Q}_p]}$-extension of $Kk_{\pi}$.
\end{theorem}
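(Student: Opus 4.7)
The plan is to study $H = \mrm{Gal}(\widetilde{K}/Kk_{\pi})$ via Kummer theory for the Lubin--Tate formal group $F := F_{\phi}$, following the Ribet / Banaszak--Gajda--Krason framework cited in the introduction. Set $L := Kk_{\pi}$, which contains the full $\pi$-power torsion $T_{\pi}$ of $F$. For each $a \in F(\mbf{m}_K) = \mbf{m}_K$, I choose a compatible tower $(x_n)_{n\ge 0}$ in $\mbf{m}_{\overline{K}}$ with $x_0 = a$ and $\phi(x_{n+1}) = x_n$; then $c_a \colon H \to T_{\pi}$, $\sigma \mapsto (\sigma x_n \ominus x_n)_n$, is a continuous homomorphism independent of the choice of tower. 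These assemble into a continuous pairing $\kappa \colon H \times F(\mbf{m}_K) \to T_{\pi}$ which is $\mbb{Z}_p$-linear in the first variable and $\cO_k$-linear (via $[\cdot]_{\phi}$) in the second. Since $\widetilde{K}$ is generated over $L$ by the collection of all such $x_n$, the left kernel of $\kappa$ is trivial, producing a continuous $\cO_k$-linear injection
$$
\rho \colon H \hookrightarrow \mrm{Hom}_{\cO_k,\,\mrm{cts}}(F(\mbf{m}_K), T_{\pi}).
$$

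Next I analyse the target via the Lubin--Tate logarithm $\lambda \in k[\![X]\!]$, characterised by $\lambda \circ \phi = \pi \lambda$ and $\lambda(X) \equiv X \pmod{X^2}$. It converges on $\mbf{m}_K$ and gives an $\cO_k$-linear map $\lambda \colon F(\mbf{m}_K) \to K$ with kernel the finite torsion subgroup $F(\mbf{m}_K)_{\mrm{tor}}$ and image commensurable with $\cO_K$ (since $\lambda$ is a local isomorphism near the origin). As $\cO_K$ is a free $\cO_k$-module of rank $[K:k]$, the structure theorem for finitely generated modules over the DVR $\cO_k$ yields
$$
F(\mbf{m}_K) \cong F(\mbf{m}_K)_{\mrm{tor}} \oplus \cO_k^{[K:k]}
$$
as $\cO_k$-modules. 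Since $T_{\pi} \cong \cO_k$ is torsion-free, continuous $\cO_k$-homomorphisms out of the finite torsion part vanish, so
$$
\mrm{Hom}_{\cO_k,\,\mrm{cts}}(F(\mbf{m}_K), T_{\pi}) \cong \cO_k^{[K:k]} \cong \mbb{Z}_p^{[K:\mbb{Q}_p]}
$$
as topological $\mbb{Z}_p$-modules. Combined with $\rho$, this realises $H$ as a closed $\cO_k$-submodule of $\cO_k^{[K:k]}$, which by elementary divisors is of the form $\cO_k^r \cong \mbb{Z}_p^{r[k:\mbb{Q}_p]}$ for some $r \le [K:k]$.

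The main obstacle is the reverse inequality $r \ge [K:k]$, equivalently the surjectivity of $\rho$, which amounts to right-nondegeneracy of $\kappa$ modulo torsion: for non-torsion $a \in F(\mbf{m}_K)$ one must have $c_a \not\equiv 0$, i.e.\ $a$ cannot be infinitely $\pi$-divisible in $F(\mbf{m}_L)$. To rule out such a tower $\phi^n(y_n) = a$ with all $y_n \in \mbf{m}_L$, I would prove a Lubin--Tate analog of the classical fact that the only $p^{\infty}$-divisible units in $1 + \mbf{m}_{\mbb{Q}_p(\mu_{p^{\infty}})}$ are roots of unity. Applying $\lambda$ to such a hypothetical tower gives $\lambda(y_n) = \pi^{-n}\lambda(a) \in K$, whose $K$-valuation tends to $-\infty$; meanwhile $y_n$ lives in the explicit chain $L = \bigcup_n Kk_n$, where each $Kk_{n+1}/Kk_n$ is totally ramified of Lubin--Tate-prescribed degree. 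A careful analysis of the behaviour of $\lambda$ on elements $y \in \mbf{m}_{Kk_n}$, controlled by the ramification of $Kk_n/K$, then forces $\lambda(a) = 0$, so $a$ is torsion. Once this non-divisibility is established, $\rho$ is an isomorphism and $H \cong \mbb{Z}_p^{[K:\mbb{Q}_p]}$, as desired.
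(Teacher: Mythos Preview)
Your Kummer--theoretic setup is essentially the same as the paper's: both build the homomorphisms $c_a\colon H\to T_\pi$ and assemble them into a map into $T_\pi^{[K:k]}$. The structure result $F(\mbf{m}_K)\cong F(\mbf{m}_K)_{\mrm{tor}}\oplus\cO_k^{[K:k]}$ is also used by the paper.

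There is, however, a genuine gap in your rank computation. You assert that $\rho$ is ``$\cO_k$-linear'' and hence that $H$ sits inside $\cO_k^{[K:k]}$ as a closed $\cO_k$-submodule, therefore of the form $\cO_k^r$. But $H$ has no a priori $\cO_k$-structure; your pairing $\kappa$ is only $\mbb{Z}_p$-linear in the $H$-variable. The paper explicitly remarks (just after Lemma~\ref{structure2-2}) that it is \emph{not known} whether the image of $\Phi$ is $\cO_k$-stable integrally. Without this, the embedding only gives $H\cong\mbb{Z}_p^{s}$ with $s\le[K:\mbb{Q}_p]$, and right-nondegeneracy yields only the $\cO_k$-linear injection $F(\mbf{m}_K)/\mrm{tor}\hookrightarrow\mrm{Hom}_{\mbb{Z}_p}(H,T_\pi)\cong\cO_k^{s}$, i.e.\ $s\ge[K:k]$. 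When $[k:\mbb{Q}_p]>1$ this leaves a factor of $[k:\mbb{Q}_p]$ unaccounted for.

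The paper closes this gap by exploiting the $G_K$-action by conjugation on $H$: one has $c_a(\tilde\sigma\tau\tilde\sigma^{-1})=\chi_\pi(\tilde\sigma)\cdot c_a(\tau)$ for $\tilde\sigma\in G_K$ and $\tau\in H$, so $\mrm{Im}(\Phi)$ is stable under $\mbb{Z}_p[\chi_\pi(G_K)]$. Since $\chi_\pi(G_K)$ is open in $\cO_k^\times$, one gets $\mbb{Q}_p[\chi_\pi(G_K)]=k$; hence $\mrm{Im}(\Phi)\otimes_{\mbb{Z}_p}\mbb{Q}_p$ is a $k$-subspace of $V_\pi^{[K:k]}$. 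The paper then combines the semisimplicity of $V_\pi^{[K:k]}$ as a $k[G_K]$-module with the $\cO_k$-linear independence of the $c_{a_i}$ to force this subspace to be everything, so $\mrm{Im}(\Phi)$ is open. This Galois-equivariance step is precisely the missing ingredient in your sketch.

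A secondary point: your proposed proof of right-nondegeneracy via valuations of $\lambda(y_n)$ is heuristic. The image $\lambda(\mbf{m}_{Kk_n})$ is not easily bounded below as $n$ grows, since the ramification of $Kk_n/K$ grows in tandem; making this precise would require a careful Newton-polygon analysis of $\lambda$ that you have not supplied. The paper instead proves the equivalent statement (Lemma~\ref{structure2-2}(1)) by showing that the kernel of the restriction $H^1(K,T_\pi)\to H^1(Kk_\pi,T_\pi)$, namely $H^1(Kk_\pi/K,T_\pi)$, is finite (Lemma~\ref{structure2-1}); then if $c_a=0$ in $H^1(Kk_\pi,T_\pi)$, the class of $a$ under the Kummer map lands in this finite kernel, forcing $a$ to be torsion.
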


Since an extension of a $p$-adic Lie group by a $p$-adic Lie group is 
again a $p$-adic Lie group (cf.\ \cite[Lemma 9.1]{GW}),
we have 
\begin{corollary}
$\widetilde{K}/K$ is a $p$-adic Lie extension.
\end{corollary}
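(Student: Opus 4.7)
The plan is to realize $H := \mathrm{Gal}(\widetilde{K}/Kk_\pi)$ as a closed submodule of a Kummer-type Hom module attached to the formal $\mathcal{O}_k$-module $F_\phi$, and then identify the target with $\mathbb{Z}_p^{[K:\mathbb{Q}_p]}$ by means of the formal group logarithm. First, I would record two easy preliminaries: (i) $\widetilde{K}/Kk_\pi$ is Galois, since $\widetilde{K}$ is a compositum of normal subextensions $K_{\phi,a}$ of $K$; and (ii) $H$ is abelian, which is the special case of Lemma \ref{Hlem} in which $\sigma\in H$ (so $\chi_\pi(\sigma)=1$), as already noted after Lemma \ref{Hlem}.

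Next I would construct a Kummer pairing
$$
\kappa\colon H\times F_\phi(\mathbf{m}_K)\longrightarrow T_\pi
$$
as follows. For $a\in F_\phi(\mathbf{m}_K)=\mathbf{m}_K$, pick any compatible system $x=(x_n)_{n\ge 0}$ in $\mathbf{m}_{\overline K}$ with $x_0=a$ and $\phi(x_{n+1})=x_n$; such a system exists by the Weierstrass preparation applied to $\phi(X)-x_n$, and each $x_n$ lies in $\widetilde K$ by definition. For $\tau\in H$, set $\kappa(\tau,a)_n:=\tau(x_n)\ominus_{F_\phi} x_n$. Since $\phi^n\bigl(\tau(x_n)\ominus x_n\bigr)=\tau(a)\ominus a=0$, this lies in $F_\phi[\pi^n]_\phi$, and the compatibility $[\pi]_\phi\kappa(\tau,a)_{n+1}=\kappa(\tau,a)_n$ is immediate. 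Independence of the choice of $x$ uses that any other lift differs by an element of $F_\phi[\pi^n]_\phi$, on which $\tau$ acts trivially because $\chi_\pi|_H=1$. The same triviality, together with the $\mathcal{O}_k$-linearity of $[b]_\phi$ and its commutation with $\tau$, shows that $\kappa$ is continuous, $\mathbb{Z}$-bilinear, and $\mathcal{O}_k$-linear in the second argument, so it induces a continuous $\mathcal{O}_k$-linear homomorphism
$$
\kappa^*\colon H\longrightarrow \mathrm{Hom}^{\mathrm{cts}}_{\mathcal{O}_k}\bigl(F_\phi(\mathbf{m}_K),T_\pi\bigr).
$$
Injectivity of $\kappa^*$ is the defining property of $\widetilde{K}$: if $\kappa(\tau,a)=0$ for every $a\in F_\phi(\mathbf{m}_K)$, then $\tau$ fixes every $\phi^n$-preimage of every $a\in \mathbf{m}_K$, which is exactly a generating set for $\widetilde K/Kk_\pi$.

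To identify the target, I would invoke the formal group logarithm $\log_{F_\phi}$, which is $\mathcal{O}_k$-linear and gives an isomorphism of topological $\mathcal{O}_k$-modules between a sufficiently small open subgroup $U\subset F_\phi(\mathbf{m}_K)$ and an open $\mathcal{O}_k$-submodule of $K$. Since $\mathcal{O}_K$ is free of rank $[K:k]$ over $\mathcal{O}_k$, and $F_\phi(\mathbf{m}_K)/U$ is finite, dualizing yields a topological isomorphism
$$
\mathrm{Hom}^{\mathrm{cts}}_{\mathcal{O}_k}\bigl(F_\phi(\mathbf{m}_K),T_\pi\bigr)\;\cong\;\mathcal{O}_k^{\oplus[K:k]}\;\cong\;\mathbb{Z}_p^{\oplus[K:\mathbb{Q}_p]}
$$
as topological groups (up to a finite kernel/cokernel, which only affects a finite open subgroup). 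Combined with the injectivity of $\kappa^*$, this already shows that $H$ embeds as a closed $\mathbb{Z}_p$-submodule of $\mathbb{Z}_p^{\oplus[K:\mathbb{Q}_p]}$; in particular $H$ is torsion-free and of $\mathbb{Z}_p$-rank at most $[K:\mathbb{Q}_p]$.

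The remaining — and hardest — step is the matching lower bound, i.e.\ showing $\mathrm{rank}_{\mathbb{Z}_p}(H)=[K:\mathbb{Q}_p]$. For this I would fix an $\mathcal{O}_k$-basis $a_1,\dots,a_{[K:k]}$ of a free $\mathcal{O}_k$-submodule of finite index in $F_\phi(\mathbf{m}_K)$ (obtained via $\log_{F_\phi}$) and show that for each $a=a_i$, the extension $Kk_\pi\bigl(\phi^{-\infty}(a)\bigr)/Kk_\pi$ has Galois group isomorphic to $T_\pi\cong\mathcal{O}_k$ rather than a proper quotient, and that the $\kappa^*$-images of generators of these Galois groups are $\mathcal{O}_k$-linearly independent in the Hom module. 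The first point reduces to showing that $a$ is not $\pi^n$-divisible in $F_\phi(\mathbf{m}_{Kk_\pi})$ for arbitrary $n$, which follows because $F_\phi(\mathbf{m}_{Kk_\pi})\otimes_{\mathbb{Z}_p}\mathbb{Q}_p$ is, via $\log_{F_\phi}$, a finite-dimensional $k$-vector space (namely $Kk_\pi$) and so has no nonzero $\pi$-divisible elements; the $\log_{F_\phi}(a_i)$ are linearly independent by construction. The linear independence of the Kummer classes then follows from the $\mathcal{O}_k$-linearity of $\kappa^*$ together with the injectivity of the map on each one-dimensional piece. This promotes the injection $\kappa^*$ to an equality on the free part, giving $H\cong\mathbb{Z}_p^{\oplus[K:\mathbb{Q}_p]}$ as desired.
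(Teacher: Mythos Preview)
Your Kummer--theoretic setup is exactly the paper's approach (the map $\kappa^*$ is the map $\Phi$ of Lemma~\ref{structure2-2}, repackaged as a map into a Hom module), and the injectivity of $\kappa^*$ together with the identification of the target with $\mathcal{O}_k^{\oplus[K:k]}$ is correct. In fact this already suffices for the corollary: a closed subgroup of $\mathbb{Z}_p^{\oplus[K:\mathbb{Q}_p]}$ is a $p$-adic Lie group, and then $\mathrm{Gal}(\widetilde{K}/K)$, being an extension of the $p$-adic Lie group $\mathrm{Gal}(Kk_\pi/K)\subset\mathcal{O}_k^\times$ by $H$, is a $p$-adic Lie group. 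You do not state this final extension step (the paper cites \cite[Lemma~9.1]{GW}), but with it your argument up through the upper bound is complete and matches the paper.

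Your ``hardest step'' (the lower bound on $\mathrm{rank}_{\mathbb{Z}_p}H$) is not needed for the corollary, and as written it contains a genuine error. The claim that $F_\phi(\mathbf{m}_{Kk_\pi})\otimes_{\mathbb{Z}_p}\mathbb{Q}_p\cong Kk_\pi$ is a \emph{finite}-dimensional $k$-vector space is false: $k_\pi/k$ is an infinite totally ramified extension, so $Kk_\pi$ is infinite over $k$, and in the field $Kk_\pi$ every element is $\pi$-divisible. Thus your argument for the non-$\pi^n$-divisibility of $a$ in $F_\phi(\mathbf{m}_{Kk_\pi})$ collapses. Moreover, even granting that each $\varphi_{a_i}$ has open image in $T_\pi$, this does not imply that the combined map $\Phi$ has open image in $T_\pi^{\oplus r}$; ``injectivity on each one-dimensional piece'' does not yield linear independence of the $\varphi_{a_i}$ as homomorphisms. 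The paper handles precisely this point by proving that $H^1(Kk_\pi/K,T_\pi)$ is finite (Lemma~\ref{structure2-1}), so that an $\mathcal{O}_k$-relation among the $\varphi_{a_i}$ forces the corresponding combination of the $a_i$ to be torsion in $F_\phi(\mathbf{m}_K)$ (Lemma~\ref{structure2-2}(1)); openness of the image then requires an additional representation-theoretic argument (Lemma~\ref{structure2-2}(2)). If you want the full Theorem~\ref{structure:Ktilde}, these are the ingredients you are missing.
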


Let $c=c(K/k,\pi)$ be the integer defined in the previous section.

\begin{corollary}
\label{finab}
The maximal abelian extension $M$ of $K$ contained in $\widetilde{K}$ is a 
finite extension of $Kk_{\pi}$. 
Moreover, the extension degree $[M:Kk_{\pi}]$ 
is a divisor of  $p^{c[K:\mbb{Q}_p]}$.
\end{corollary}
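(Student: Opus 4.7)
The plan is to identify the maximal abelian extension $M/K$ inside $\widetilde{K}$ with the fixed field of the closed commutator subgroup $\overline{(G,G)}$ of $G=\mrm{Gal}(\widetilde{K}/K)$. Since $G/H \simeq \mrm{Gal}(Kk_{\pi}/K)$ is abelian (it embeds into $\cO_k^{\times}$ via $\chi_{\pi}$), we have $\overline{(G,G)}\subset H$, and consequently
\[
[M:Kk_{\pi}] \;=\; [H:\overline{(G,G)}].
\]
So the problem reduces to bounding the index of the commutator subgroup inside $H$.

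Next, I would invoke the defining property of $c=c(K/k,\pi)$: there exists $\sigma\in G$ with $\chi_{\pi}(\sigma)=1+p^{c}$, which is a rational integer. Lemma \ref{Hlem} then yields
\[
\sigma\tau\sigma^{-1}\tau^{-1}=\tau^{1+p^{c}}\tau^{-1}=\tau^{p^{c}}
\qquad \text{for every }\tau\in H,
\]
so $\tau^{p^{c}}\in (G,G)$ for all $\tau\in H$. Passing to the closure gives the key inclusion $H^{p^{c}}\subset \overline{(G,G)}$ (this is exactly the inclusion already used in the proof of Proposition \ref{finres}).

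Finally, I would appeal to Theorem \ref{structure:Ktilde}, which identifies $H$ with $\mbb{Z}_{p}^{\oplus[K:\mbb{Q}_{p}]}$ as a topological group. Under this identification $H^{p^{c}}$ corresponds to $p^{c}\mbb{Z}_{p}^{\oplus[K:\mbb{Q}_{p}]}$, so
\[
[H:H^{p^{c}}]\;=\;p^{c[K:\mbb{Q}_{p}]}.
\]
Combining the two inclusions $H^{p^{c}}\subset \overline{(G,G)}\subset H$ shows that $[H:\overline{(G,G)}]$ is a divisor of $p^{c[K:\mbb{Q}_{p}]}$, which gives both the finiteness of $[M:Kk_{\pi}]$ and the stated divisibility.

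There is no real obstacle here: all of the work has been done by Lemma \ref{Hlem} (which controls the $G$-conjugation action on $H$ in terms of $\chi_{\pi}$) and by Theorem \ref{structure:Ktilde} (which pins down $H$ as a free $\mbb{Z}_{p}$-module of rank $[K:\mbb{Q}_{p}]$). The only mild point to check carefully is that raising to the $p^{c}$-th power on a free $\mbb{Z}_{p}$-module of finite rank is the multiplication-by-$p^{c}$ map, so the quotient has exactly the asserted order; this is routine.
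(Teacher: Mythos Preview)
Your proof is correct and follows essentially the same approach as the paper: the paper likewise combines the inclusion $H^{p^{c}}\subset \overline{(G,G)}=\mrm{Gal}(\widetilde{K}/M)\subset H$ (established in the proof of Proposition~\ref{finres} via Lemma~\ref{Hlem}) with Theorem~\ref{structure:Ktilde} to compute $[H:H^{p^{c}}]=p^{c[K:\mbb{Q}_{p}]}$.
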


\begin{proof}
We use the same notation as the proof of Proposition \ref{finres}.
By Theorem \ref{structure:Ktilde}, we know that
$H/H^{p^c}$ is of order $p^{c[K:\mbb{Q}_p]}$.
Thus the result follows from 
 $H^{p^c}\subset \mrm{Gal}(\widetilde{K}/M)\subset H$.
\end{proof}

In the rest of this section, we give a proof of Theorem \ref{structure:Ktilde}.
Since  $\widetilde{K}$ does not depend on the choice of $\phi$
by Proposition \ref{tildeK},
for the proof of the theorem,   
we may suppose  that 
$\phi$ is a polynomial of degree $q$.
Let $a\in \mbf{m}_K$. 
For any integer $n>0$, we denote by $K_{\phi,a,n}$ the 
extension filed of $K$ obtained by adjoining to $K$ all $x\in \mbf{m}_{\overline{K}}$ 
such that $\phi^n(x)=a$. 
In other words, 
$K_{\phi,a,n}$ is the extension filed of $K$ obtained by adjoining to $K$  all
$\pi^n$-th roots of $a$ in $F_{\phi}$.
Note that $K_{\phi,a,n}$  is a Galois extension of $K$ and we have 
$
K_{\phi,a,n}=K(a_n, F_{\phi}[\pi^n]_{\phi}).
$ 
Here, $a_n$
is any $\pi^n$-th root of $a$ in $F_{\phi}$ 
(note that the set of the roots of 
$\phi^n(X)=a$
is just $a_n\oplus  F_{\phi}[\pi^n]_{\phi}$).
By definition, $K_{\phi,a}$  is equal to 
the composite field of  all $K_{\phi,a,n}$ for all $n$.

\begin{lemma}
\label{structure0}
Let $a,b\in \mbf{m}_K$.
We have $K_{\phi,a}=K_{\phi,b}$ if any one of the following hold.

\noindent
{\rm (1)} $a\ominus  b$ is torsion in $F_{\phi}$.

\noindent
{\rm (2)} $a=[\lambda]_{\phi}(b)$ for some non-zero $\lambda\in \cO_k$.
\end{lemma}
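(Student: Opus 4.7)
The plan is to exploit the formal $\cO_k$-module structure on $F_\phi$, together with the fact that the $\pi^n$-torsion subgroup $F_\phi[\pi^n]_\phi$ is contained in $k_\pi$ for every $n$. The recurring preliminary observation is this: since the set of $\pi^n$-th roots of any $c\in \mbf m_K$ in $F_\phi(\mbf m_{\overline K})$ forms a torsor under $F_\phi[\pi^n]_\phi$, the field $K_{\phi,c,n}$ automatically contains $F_\phi[\pi^n]_\phi$, and therefore $K_{\phi,c}\supset Kk_\pi$ for every $c\in \mbf m_K$. (The existence of $\pi^n$-th roots inside $\mbf m_{\overline K}$ itself follows from a Newton polygon argument applied to $\phi^n(X)-c\in \cO_k[X]$, using that one may suppose $\phi$ is a polynomial of degree $q$.)

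For (1), assume $[\pi^N]_\phi(a\ominus b)=0$ for some $N\ge 0$. Given $n\ge 0$ and a $\pi^n$-th root $a_n$ of $a$, pick any $\pi^n$-th root $b_n$ of $b$ in $\mbf m_{\overline K}$. From $\phi^n(a_n\ominus b_n)=a\ominus b$ and the torsion hypothesis, $\phi^{n+N}(a_n\ominus b_n)=0$, so $a_n\ominus b_n\in F_\phi[\pi^{n+N}]_\phi\subset k_\pi\subset K_{\phi,b}$. Combined with $b_n\in K_{\phi,b,n}$, this forces $a_n\in K_{\phi,b}$, whence $K_{\phi,a}\subset K_{\phi,b}$; the reverse inclusion follows by swapping the roles of $a$ and $b$.

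For (2), write $\lambda=u\pi^s$ with $u\in \cO_k^\times$ and $s\ge 0$, so that $a=[u]_\phi(\phi^s(b))$. It suffices to treat the unit case $a=[u]_\phi(b)$ and the $\pi^s$ case $a=\phi^s(b)$ separately. In the unit case, $[u]_\phi\in X\cO_k[\![X]\!]$ is invertible under composition by $[u^{-1}]_\phi$ and commutes with $\phi=[\pi]_\phi$, so $x\mapsto [u]_\phi(x)$ gives a bijection between the $\pi^n$-th roots of $b$ and those of $a$ satisfying $K(x)=K([u]_\phi(x))$; hence $K_{\phi,a,n}=K_{\phi,b,n}$ for every $n$. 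In the $\pi^s$ case, pick any $\pi^n$-th roots $a_n,b_n$. The identity $\phi^{n+s}(b_n)=\phi^s(b)=a$ shows that $b_n$ is a $\pi^{n+s}$-th root of $a$, so $b_n\in K_{\phi,a,n+s}\subset K_{\phi,a}$, yielding $K_{\phi,b}\subset K_{\phi,a}$. Conversely, $\phi^n(a_n\ominus \phi^s(b_n))=a\ominus \phi^{n+s}(b_n)=0$ places $a_n\ominus \phi^s(b_n)$ in $F_\phi[\pi^n]_\phi\subset k_\pi\subset K_{\phi,b}$, and since $\phi^s(b_n)\in K_{\phi,b,n}$, we conclude $a_n\in K_{\phi,b}$ and $K_{\phi,a}\subset K_{\phi,b}$.

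These arguments are essentially bookkeeping with the formal group law, and I do not anticipate a substantive obstacle. The single conceptual point, present in every step, is that $F_\phi[\pi^n]_\phi\subset k_\pi\subset K_{\phi,c}$, so all ambiguities in the choice of $\pi^n$-th root are absorbed inside $Kk_\pi$; this is why shifting $c$ by a torsion element, or applying an $\cO_k$-scalar, never changes the tower.
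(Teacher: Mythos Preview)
Your proof is correct and follows essentially the same approach as the paper's: both arguments reduce to showing that a chosen $\pi^n$-th root of $a$ differs from an element of $K_{\phi,b}$ by a torsion point of $F_\phi$, and both split (2) into the unit and $\pi$-power cases. Your handling of the case $a=\phi^s(b)$ is in fact slightly more streamlined than the paper's, since comparing $a_n$ directly with $\phi^s(b_n)$ avoids the paper's case distinction $n\le s$ versus $n>s$.
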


\begin{proof}
(1) 
Let $n>0$ be an integer and 
take any $x\in \mbf{m}_{\overline{K}}$ such that $[\pi^n]_{\phi}(x)=a$.
It suffices to show that $x$ is an element of $K_{\phi,b}$.
Taking any $b_n\in \mbf{m}_{\overline{K}}$ such that 
$[\pi^n]_{\phi}(b_n)=b$,
we have $[\pi^n]_{\phi}(x\ominus  b_n)=a\ominus  b$.
By the assumption, we have
$x\ominus  b_n\in F_{\phi}[\pi^m]_{\phi}$ 
for some $m$ large enough. 
This shows that  $x$ is an element of $K_{\phi,b,m}(\subset K_{\phi,b})$.

\noindent
(2) It suffices to consider the cases where 
$\lambda\in \cO_k^{\times}$ and $\lambda=\pi^m$. 

Assume that $\lambda\in \cO_k^{\times}$.
Let $n>0$ be an integer and 
take any $x\in \mbf{m}_{\overline{K}}$ such that $[\pi^n]_{\phi}(x)=a$. 
We have $[\pi^n]_{\phi}([\lambda^{-1}]_{\phi}(x))=b$ and thus 
$[\lambda^{-1}]_{\phi}(x)$ is an element of the maximal ideal of 
$K_{\phi,b,n}$. Since $[\lambda]_{\phi}(X)\in \cO_k[\![X]\!]$, we have
 $x=[\lambda]_{\phi}([\lambda^{-1}]_{\phi}(x))\in K_{\phi,b,n}\subset K_{\phi,b}$.
This shows $K_{\phi,a}\subset K_{\phi,b}$.
The converse inclusion $K_{\phi,a}\supset K_{\phi,b}$ follows by the same way.

Next we assume $\lambda=\pi^m$.
It is not difficult to check that
$x\in \mbf{m}_{\overline{K}}$ satisfies $[\pi^n]_{\phi}(x)=a$ for $n\le m$ 
(resp.\ $n>m$) if and only if 
$x\ominus  [\pi^{m-n}]_{\phi}(b) \in F_{\phi}[\pi^n]_{\phi}$
(resp.\ $x\ominus  b_{n-m} \in F_{\phi}[\pi^n]_{\phi}$). 
Here, $b_{n-m}$ is an element of $\mbf{m}_{\overline{K}}$ 
such that $[\pi^{n-m}]_{\phi}(b_{n-m})=b$.
Thus $K_{\phi,a,n}$ is equal to 
$K(F_{\phi}[\pi^n]_{\phi})$ (resp.\ $K(b_{n-m},F_{\phi}[\pi^n]_{\phi})$).
Now the result follows.
\end{proof}

Let $a_1,\dots , a_r$ be elements of $F_{\phi}(\mbf{m}_K)$. 
We define a continuous homomorphism
$$
\varphi_i \colon G_{Kk_{\pi}}\to T_{\pi} 
$$
as follows: 
Take a system $(a^{(n)}_i)_{n\ge 0}$ in 
$\mbf{m}_{\overline{K}}$
such that $a^{(0)}_i=a_i$ and $[\pi]_{\phi}(a^{(n+1)}_i)=a^{(n)}_i$ for any $n$,
and define a homomorphism 
$\varphi^{(n)}_i \colon G_{Kk_{\pi}}\to F_{\phi}[\pi^n]_{\phi}$
by $\varphi^{(n)}_i(\sigma):=\sigma(a^{(n)}_i)\ominus  a^{(n)}_i$.
We define $\varphi_i$ to be the inverse limit of $(\varphi^{(n)}_i)_{n\ge 0}$.
It is not difficult to check that each $\varphi_i^{(n)}$, and thus also $\varphi_i$,
is independent of the choice of 
$(a^{(n)}_i)_{n\ge 0}$.
By definition, the extension field of $Kk_{\pi}$ corresponding to the kernel 
of $\varphi_i$ is $K_{\phi,a_i}$.
We define a continuous homomorphism
$$
\Phi \colon G_{Kk_{\pi}}\to \oplus^r_{i=1} T_{\pi} 
$$
by $\Phi = \varphi_1\oplus \cdots \oplus \varphi_r$.
Note that the extension field $K_{\Phi}$ of $Kk_{\pi}$ corresponding to the kernel 
of $\Phi$ is $K_{\phi,a_1}\cdots K_{\phi,a_r}$,
that is,  
the composite field of 
$K_{\phi,a_1,n}\cdots K_{\phi,a_r,n}$ 
for all $n$. 
Note that we have an isomorphism 
$$
F_{\phi}(\mbf{m}_K)\simeq \cO_k^{\oplus [K:k]}
\oplus \text{(a finite $\pi$-power torsion group)}
$$
of $\cO_{k}$-modules.

\begin{lemma}
\label{structure1}
Assume that the images of 
$a_1,\dots ,a_r$ in $F_{\phi}(\mbf{m}_K)/\mbox{{\rm (tor)}}$ 
generate the free $\cO_k$-module $F_{\phi}(\mbf{m}_K)/\mbox{{\rm (tor)}}$
{\rm (}thus $r\ge [K:k]${\rm )}.
Then, we have $\widetilde{K}=K_{\Phi}$. 
\end{lemma}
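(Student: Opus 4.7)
The plan is to verify the two containments separately. The inclusion $K_\Phi \subset \widetilde{K}$ is immediate from the definitions of the two composita, since each $K_{\phi,a_i}$ is by definition a subfield of $\widetilde{K}$. For the reverse inclusion, it suffices to show $K_{\phi,a} \subset K_\Phi$ for every $a \in F_\phi(\mbf{m}_K)$, and the rest of the argument is devoted to this.

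To prove $K_{\phi,a} \subset K_\Phi$, I would first invoke the generating hypothesis on $a_1,\dots,a_r$: the image of $a$ in $F_\phi(\mbf{m}_K)/\mrm{tor}$ is an $\cO_k$-combination of the images of the $a_i$, so we may write $a = [\lambda_1]_\phi(a_1) \oplus \cdots \oplus [\lambda_r]_\phi(a_r) \oplus t$ in $F_\phi(\mbf{m}_K)$ with $\lambda_i \in \cO_k$ and $t$ a torsion element. Setting $b := [\lambda_1]_\phi(a_1) \oplus \cdots \oplus [\lambda_r]_\phi(a_r)$, the difference $a \ominus b = t$ is torsion in $F_\phi$, so Lemma \ref{structure0}(1) gives $K_{\phi,a} = K_{\phi,b}$, reducing the problem to showing $K_{\phi,b} \subset K_\Phi$.

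For each $n \ge 1$ I would pick $a_i^{(n)} \in \mbf{m}_{\overline{K}}$ with $\phi^n(a_i^{(n)}) = a_i$, so that $a_i^{(n)} \in K_{\phi,a_i,n}$, and then form $b_n := [\lambda_1]_\phi(a_1^{(n)}) \oplus \cdots \oplus [\lambda_r]_\phi(a_r^{(n)})$. Since each $[\lambda_i]_\phi$ commutes with $[\pi^n]_\phi$ in $\mrm{End}_{\cO_k}(F_\phi)$, one checks that $\phi^n(b_n) = [\pi^n]_\phi(b_n) = b$, so $b_n$ is a $\pi^n$-th root of $b$ lying in the composite $K_{\phi,a_1,n}\cdots K_{\phi,a_r,n} \subset K_\Phi$. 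Combining this with the explicit description $K_{\phi,b,n} = K(b_n, F_\phi[\pi^n]_\phi)$ recalled in the paper and with the fact that each $K_{\phi,a_i,n}$ already contains $F_\phi[\pi^n]_\phi$, we obtain $K_{\phi,b,n} \subset K_\Phi$. Taking the union over $n$ yields $K_{\phi,b} \subset K_\Phi$, as required.

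The argument is essentially a bookkeeping exercise and I do not foresee any genuine obstacle. The two points that deserve care are the correct application of Lemma \ref{structure0}(1) to absorb the torsion discrepancy $t$ into a change of basepoint, and the verification that the formal-group combination of the individual $\pi^n$-th roots $a_i^{(n)}$ really produces a $\pi^n$-th root of $b$; the latter rests solely on the fact that each $[\lambda_i]_\phi$ is an $\cO_k$-endomorphism of $F_\phi$ and hence commutes with $[\pi^n]_\phi$.
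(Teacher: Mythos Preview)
Your argument is correct and follows essentially the same route as the paper's proof: reduce to $b=[\lambda_1]_\phi(a_1)\oplus\cdots\oplus[\lambda_r]_\phi(a_r)$ via Lemma~\ref{structure0}(1), then build a $\pi^n$-th root of $b$ from $\pi^n$-th roots of the $a_i$ and use $K_{\phi,b,n}=K(b_n,F_\phi[\pi^n]_\phi)$ to conclude. The only cosmetic difference is that you make the easy inclusion $K_\Phi\subset\widetilde{K}$ explicit, whereas the paper leaves it implicit.
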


\begin{proof}
Let $x\in \mbf{m}_K$. 
It suffices to show 
$K_{\phi,x}\subset K_{\Phi}$.
Take $\lambda_1,\dots ,\lambda_r\in \cO_k$
such that 
$x\ominus 
([\lambda_1]_{\phi}(a_1)\oplus \cdots \oplus [\lambda_r]_{\phi}(a_r))$ is torsion.
By Lemma \ref{structure0} (1),
we have $K_{\phi,x}=K_{\phi,y}$ where
$y=[\lambda_1]_{\phi}(a_1)\oplus \cdots \oplus [\lambda_r]_{\phi}(a_r)$.
Thus we may assume $x=y$.
For any $n>0$ and any $j\in \{1,2,\dots ,r\}$,
take $a^{(n)}_j\in \mbf{m}_{\overline{K}}$ 
such that $[\pi^n]_{\phi}(a^{(n)}_j)=a_j$.
Set 
$x^{(n)}:=[\lambda_1]_{\phi}(a^{(n)}_1) 
\oplus \cdots \oplus  [\lambda_r]_{\phi}(a^{(n)}_r)$.
Then
$[\pi^n]_{\phi}(x^{(n)})=x$ and 
 the set of $\pi^n$-th roots of $x$ in $F_{\phi}$
is $x^{(n)}\oplus  F_{\phi}[\pi^n]_{\phi}$.
Hence, we obtain
$$
K_{\phi,x,n}=K(x^{(n)}, F_{\phi}[\pi^n]_{\phi})
\subset K(a^{(n)}_1,\dots , a^{(n)}_r, F_{\phi}[\pi^n]_{\phi})
\subset K_{\phi,a_1}\cdots K_{\phi,a_r}=K_{\Phi}
$$
for any $n$. 
This shows $K_{\phi,x}\subset K_{\Phi}$ as desired.
\end{proof}

We say that $a_1,\dots ,a_r$ are {\it linearly independent over $\cO_k$}
if no non-trivial linear combination 
$\sum^r_{i=1} [\lambda_i]_{\phi} (a_i)$ with $\lambda_i\in \cO_k$ vanishes. 
We remark that, if $a_1,\dots ,a_r$ are linearly independent over $\cO_k$,
then it implies $r\le [K:k]$.

\begin{lemma}
\label{structure2-2}
Assume that $a_1,\dots ,a_r$ are linearly independent over $\cO_k$.

\noindent
{\rm (1)} If $\lambda_1\varphi_1+\cdots + \lambda_r\varphi_r=0$
for $\lambda_1,\dots , \lambda_r\in \cO_k$, 
then we have $\lambda_1=\cdots =\lambda_r=0$.

\noindent
{\rm (2)} The map $\Phi \colon G_{Kk_{\pi}}\to \oplus^r_{i=1} T_{\pi}$
has an open image. 

\noindent
{\rm (3)} $K_{\Phi}$ is a 
$\mbb{Z}_p^{\oplus r\cdot [k:\mbb{Q}_p]}$-extension of $Kk_{\pi}$.
\end{lemma}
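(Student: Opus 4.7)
The plan is to prove the three parts in sequence: (1) supplies the injectivity needed for (2), (2) identifies the image of $\Phi$ as an open $\mbb{Z}_p$-submodule of full rank, and (3) is a formal consequence. Part (2) is the heart of the argument and exploits the Galois-equivariance of $\Phi$ via the $\chi_{\pi}$-twist under $G_K$-conjugation.

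For (1), I first observe that $a\mapsto \varphi_a$ defines an $\cO_k$-module homomorphism $\delta\colon F_{\phi}(\mbf{m}_K)\to \mrm{Hom}(G_{Kk_{\pi}}, T_{\pi})$ with respect to the formal group law, via the identity $[\lambda]_{\phi}(x\oplus y)=[\lambda]_{\phi}(x)\oplus [\lambda]_{\phi}(y)$. Hence the relation $\sum \lambda_i\varphi_i=0$ is equivalent to $b:=\sum [\lambda_i]_{\phi}(a_i)\in F_{\phi}(\mbf{m}_K)$ having a $\pi^n$-th root in $F_{\phi}(\mbf{m}_{Kk_{\pi}})$ for every $n\ge 1$. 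I then aim to deduce that $b$ is torsion. The tool is the Lubin--Tate logarithm $\log_{\phi}$: it is $\cO_k$-linear with kernel the $\pi$-power torsion and satisfies $\log_{\phi}([\pi^n]_{\phi}x)=\pi^n\log_{\phi}(x)$. Exhausting $Kk_{\pi}=\bigcup_m L_m$ by the finite subextensions $L_m:=K(F_{\phi}[\pi^m]_{\phi})$, a standard Kummer-theoretic argument in the style of Ribet and Banaszak--Gajda--Krason shows that no non-torsion element of $F_{\phi}(\mbf{m}_K)$ can be $\pi^n$-divisible in $F_{\phi}(\mbf{m}_{L_{m(n)}})$ for all $n$; hence $\log_{\phi}(b)=0$ and $b$ is torsion. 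The linear independence of $a_1,\dots,a_r$ over $\cO_k$ then forces $\pi^m\lambda_i=0$ for some $m$, and so $\lambda_i=0$.

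For (2), set $M:=\mrm{image}(\Phi)$, a compact and hence closed $\mbb{Z}_p$-submodule of $\oplus^r_{i=1}T_{\pi}$. The new input is the conjugation formula: for $\sigma\in G_K$ and $\rho\in G_{Kk_{\pi}}$, a short cocycle computation (using that $G_{Kk_{\pi}}$ fixes every $\pi$-power torsion point, so the twist by $\sigma$ contributes only through the scalar $\chi_{\pi}(\sigma)$) gives $\varphi_i(\sigma\rho\sigma^{-1})=\chi_{\pi}(\sigma)\cdot\varphi_i(\rho)$. Thus $M$ is stable under multiplication by the open subgroup $\chi_{\pi}(G_K)\subset \cO_k^{\times}$, and so $V:=M\otimes_{\mbb{Z}_p}\mbb{Q}_p$ is a $k$-subspace of $\oplus^r_{i=1}V_{\pi}\simeq k^r$, because an open subgroup of $\cO_k^{\times}$ spans $\cO_k$ as a $\mbb{Z}_p$-module after inverting $p$. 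If $\dim_k V<r$, a nonzero $k$-linear functional on $k^r$ vanishing on $V$, rescaled to have $\cO_k$-coefficients, would yield a non-trivial $\cO_k$-linear relation among the $\varphi_i$, contradicting (1). Hence $\dim_k V=r$, so the closed submodule $M$ has full $\mbb{Z}_p$-rank in $\oplus^r_{i=1}T_{\pi}\simeq \mbb{Z}_p^{r[k:\mbb{Q}_p]}$ and is therefore open.

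For (3), Galois theory identifies $\mrm{Gal}(K_{\Phi}/Kk_{\pi})\simeq M$, and any open subgroup of the free $\mbb{Z}_p$-module $\oplus^r_{i=1}T_{\pi}\simeq \mbb{Z}_p^{r[k:\mbb{Q}_p]}$ is of finite index, hence itself free of the same rank as a topological $\mbb{Z}_p$-module. The main obstacle I anticipate is the Kummer injectivity step in (1): although the logarithm argument is transparent on each finite level $L_m$, the infinite extension $Kk_{\pi}$ is not discretely valued, so one must carefully track the valuations of $\pi^n$-th roots across the Lubin--Tate tower to conclude that the simultaneous divisibility of $b$ in $F_{\phi}(\mbf{m}_{Kk_{\pi}})$ forces $\log_{\phi}(b)=0$ back in $K$.
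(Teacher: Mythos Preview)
Your treatment of parts (2) and (3) is essentially the paper's: the conjugation identity $\varphi_i(\tau\rho\tau^{-1})=\chi_{\pi}(\tau)\cdot\varphi_i(\rho)$ is exactly what the paper uses to show that $\mrm{Im}(\Phi)\otimes\mbb{Q}_p$ is $k$-stable (via $\mbb{Q}_p[\chi_{\pi}(G_K)]=k$), and your linear-functional argument is equivalent to the paper's complement/projection argument for proving $\dim_k=r$.

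The genuine gap is in (1). You correctly reduce to showing that if $b:=[\lambda_1]_{\phi}(a_1)\oplus\cdots\oplus[\lambda_r]_{\phi}(a_r)$ is $\pi^n$-divisible in $F_{\phi}(\mbf{m}_{Kk_{\pi}})$ for every $n$, then $b$ is torsion; but your proposed logarithm/valuation argument does not close. If $[\pi^n]_{\phi}(c_n)=b$ with $c_n\in\mbf{m}_{L_{m(n)}}$, then $\log_{\phi}(c_n)=\pi^{-n}\log_{\phi}(b)\in K$ has valuation tending to $-\infty$, but this is not a contradiction: as $m(n)\to\infty$ the ramification of $L_{m(n)}/K$ grows without bound, so elements of $\mbf{m}_{L_{m(n)}}$ can have arbitrarily small positive valuation and their formal logarithms can have arbitrarily negative valuation. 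Nothing in the tower forces $v_p(\log_{\phi}(c_n))$ to stay bounded below. The ``standard Kummer-theoretic argument in the style of Ribet and BGK'' that you invoke is in fact a cohomological one, and that is precisely how the paper proceeds: it first proves, as a separate lemma, that the inflation group $H^1(Kk_{\pi}/K,T_{\pi})$ is \emph{finite}. Then the map $\xi\colon F_{\phi}(\mbf{m}_K)\hookrightarrow H^1(K,T_{\pi})\to H^1(Kk_{\pi},T_{\pi})$ (the inverse limit of the Kummer maps composed with restriction) has $\xi(a_i)=\varphi_i$, so $\xi(b)=0$; since $\ker\bigl(H^1(K,T_{\pi})\to H^1(Kk_{\pi},T_{\pi})\bigr)\simeq H^1(Kk_{\pi}/K,T_{\pi})$ is finite and the Kummer map is injective, $b$ lies in a finite subgroup of $F_{\phi}(\mbf{m}_K)$, hence is torsion. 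The finiteness of $H^1(Kk_{\pi}/K,T_{\pi})$ is the missing ingredient in your outline, and proving it (the paper does so by filtering through a $\mbb{Z}_p$-subextension and bounding the exponent) is where the real work in (1) lies.
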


To show this lemma, we use the following.

\begin{lemma}
\label{structure2-1}
The continuous cohomology group $H^1(Kk_{\pi}/K,T_{\pi})$ is finite.
\end{lemma}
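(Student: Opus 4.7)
The plan is to identify $\Gamma := \mrm{Gal}(Kk_{\pi}/K)$ with an open subgroup of $\cO_k^{\times}$ via the Lubin-Tate character $\chi_{\pi}$, which makes $\Gamma$ abelian and realizes $T_{\pi}$ as a free $\cO_k$-module of rank one on which $\Gamma$ acts by multiplication through $\chi_{\pi}$. The strategy is then to cut down to a procyclic subgroup where the cohomology can be read off from a two-term complex, and use inflation-restriction.

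More precisely, I would first note that restriction gives an injection $\mrm{Gal}(Kk_{\pi}/K) \hookrightarrow \mrm{Gal}(k_{\pi}/k) \simeq \cO_k^{\times}$, so $\chi_{\pi}|_{\Gamma}$ is injective with open image; in particular $\Gamma$ is abelian. Using that $\chi_{\pi}(\Gamma)$ is open in $\cO_k^{\times}$, I can choose $\gamma_0 \in \Gamma$ such that $u := \chi_{\pi}(\gamma_0) - 1$ is a nonzero element of $\pi\cO_k$, and set $\Delta := \overline{\langle \gamma_0 \rangle} \simeq \mbb{Z}_p$. Since $\Gamma$ is abelian, $\Delta$ is a closed normal subgroup.

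The second step is to apply the inflation-restriction sequence in continuous cohomology,
\[
0 \to H^1(\Gamma/\Delta,\, T_{\pi}^{\Delta}) \to H^1(\Gamma,\, T_{\pi}) \to H^1(\Delta,\, T_{\pi}),
\]
and evaluate the outer terms. Because $\gamma_0 - 1$ acts on $T_{\pi} \simeq \cO_k$ as multiplication by the nonzero element $u$ and $T_{\pi}$ is torsion-free, we have $T_{\pi}^{\Delta} = 0$, so the left-hand term vanishes. For the procyclic group $\Delta$, the standard two-term complex $T_{\pi} \xrightarrow{\gamma_0 - 1} T_{\pi}$ computes $H^1(\Delta, T_{\pi}) \simeq T_{\pi}/(\gamma_0 - 1)T_{\pi} \simeq \cO_k/u\cO_k$, which is finite since $u \in \pi\cO_k \setminus \{0\}$. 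Consequently $H^1(Kk_{\pi}/K, T_{\pi})$ injects into a finite group and is itself finite.

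I do not foresee a substantial obstacle: openness of $\chi_{\pi}(\Gamma)$ supplies the required $\gamma_0$, and inflation-restriction for a closed normal subgroup of a profinite group is standard. The only technical care needed is to remember that we are working throughout with continuous cohomology, which is the intended interpretation since $T_{\pi}$ is a profinite $\Gamma$-module; the profinite structure is compatible with the above formulas because $T_{\pi}$ is the inverse limit of the finite modules $F_{\phi}[\pi^n]_{\phi}$ and the relevant $\mrm{Mittag}$-Leffler conditions are immediate.
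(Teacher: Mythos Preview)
Your argument is correct and shares the paper's core idea—inflation–restriction along a closed procyclic subgroup $\Delta$ of $\Gamma=\mrm{Gal}(Kk_\pi/K)$—but your execution is more direct. The paper works with the finite quotients $T_n=T_\pi/\pi^nT_\pi$, where the $\Delta$-invariants $T_n^{\Delta}=\pi^{n-c}T_n$ are nonzero; this forces a two-stage inflation–restriction (through an intermediate field $K'$ with $\mrm{Gal}(Kk_\pi/K')\simeq\mbb{Z}_p^d$ and then a $\mbb{Z}_p$-quotient $M$), yields only an exponent bound ($p^{3c}$ kills each $H^1(Kk_\pi/K,T_n)$), requires passing to the inverse limit, and then needs a separate argument that $H^1(Kk_\pi/K,T_\pi)$ is finitely generated over $\mbb{Z}_p$. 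By working directly with the torsion-free module $T_\pi$, you get $T_\pi^{\Delta}=0$ immediately, so the inflation term vanishes and a single restriction into $H^1(\Delta,T_\pi)\simeq\cO_k/u\cO_k$ settles finiteness at once. The paper's route produces an explicit annihilator, which could conceivably be reused; yours is cleaner for the stated goal. One small point to tighten: to guarantee $\Delta\simeq\mbb{Z}_p$ you should pick $\gamma_0$ so that $1+u$ is not a $p$-power root of unity (e.g.\ take $u$ of large enough valuation that $\log$ is an isomorphism on $1+u\cO_k$); in any case $T_\pi^{\Delta}=0$ and $H^1(\Delta,T_\pi)\subset\cO_k/u\cO_k$ remain valid even if $\Delta$ were finite cyclic, so the conclusion is unaffected.
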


\begin{proof}
Take a positive integer $m$ large enough.  We may suppose that 
the image of 
 $\chi_{\pi}\colon \mrm{Gal}(Kk_{\pi}/K)\to \cO_k^{\times}$ 
contains $1+\pi^m\cO_k$.
Let $K'$ be the subfield of $Kk_{\pi}/K$ such that 
$\chi_{\pi}(\mrm{Gal}(Kk_{\pi}/K'))=1+\pi^m\cO_k$.
In particular, $Kk_{\pi}/K'$ is a $\mbb{Z}_p^d$-extension where
$d:=[k:\mbb{Q}_p]$. 
Take a subfield $M$ in $Kk_{\pi}/K'$ with the property that 
$Kk_{\pi}/M$ is a $\mbb{Z}_p$-extension.
Let $\sigma_0$ be a topological generator of $\mrm{Gal}(Kk_{\pi}/M)$ and $c$ 
the  positive integer such that $\pi^{-c}(\chi_{\pi}(\sigma_0)-1)\in \cO_k^{\times}$.
Now we consider the following exact sequence
$$
0\to H^1(M/K',T_n^{\mrm{Gal}(Kk_{\pi}/M)})
\to H^1(Kk_{\pi}/K',T_n)\to H^1(Kk_{\pi}/M,T_n)
$$
where $T_n:=T_{\pi}/\pi^nT_{\pi}$. 
For any $n>c$, we have
\begin{itemize}
\item[--] $H^1(Kk_{\pi}/M,T_n)\simeq T_n/(\sigma_0-1)T_n
= T_n/\pi^cT_n$ (cf.\ \cite[\S 1]{Wa}), and 
\item[--] $T_n^{\mrm{Gal}(Kk_{\pi}/M)}=T_n^{\sigma_0=1}=\pi^{n-c}T_n$.
\end{itemize}
In particular,
$p^c$ vanishes 
$H^1(Kk_{\pi}/M,T_n)$ and 
$T_n^{\mrm{Gal}(Kk_{\pi}/M)}$ for any $n$.
Thus we see that 
$p^{2c}$ vanishes 
$p^{2c}H^1(Kk_{\pi}/K',T_n)$. 
Since we have an exact sequence
$$
0\to H^1(K'/K,T_n^{\mrm{Gal}(Kk_{\pi}/K')})
\to H^1(Kk_{\pi}/K,T_n)\to H^1(Kk_{\pi}/K',T_n),
$$
we obtain the  fact that 
$p^{3c}$ vanishes $H^1(Kk_{\pi}/K,T_n)$. 
Since we have an isomorphism
$H^1(Kk_{\pi}/K,T_{\pi})\simeq \plim_{n}H^1(Kk_{\pi}/K,T_n)$
(cf.\ \cite[Chapter II, Corollary 2.7.6]{NSW}),
we see that $H^1(Kk_{\pi}/K,T_{\pi})$ is killed by $p^{3c}$. 
Hence the proof finishes if we show that $H^1(Kk_{\pi}/K,T_{\pi})$
is a finitely generated $\mbb{Z}_p$-module.
We have an injection 
$H^1(Kk_{\pi}/K,T_{\pi})/pH^1(Kk_{\pi}/K,T_{\pi})
\hookrightarrow H^1(Kk_{\pi}/K,T_e)$
where $e$ is the ramification index of $k/\mbb{Q}_p$.
Since $\mrm{Gal}(Kk_{\pi}/K)$, isomorphic 
to an open subgroup of $\cO_k^{\times}$, 
is topologically finitely generated,
we see that the set of continuous 1-cocycles 
from $\mrm{Gal}(Kk_{\pi}/K)$ to $T_e$ is finite.
Hence $H^1(Kk_{\pi}/K,T_e)$ is finite,
and thus so is $H^1(Kk_{\pi}/K,T_{\pi})/pH^1(Kk_{\pi}/K,T_{\pi})$. 
By \cite[Chapter II, Corollary 2.7.9]{NSW},
we obtain the fact that 
$H^1(Kk_{\pi}/K,T_{\pi})$
is a finitely generated $\mbb{Z}_p$-module as desired.
\end{proof}

\begin{proof}[Proof of Lemma \ref{structure2-2}]
We follow the proofs  of Lemmas 2.12 and 2.13 of \cite{BGK}.

\noindent
(1) We define an $\cO_k$-linear map 
$\xi_n\colon F_{\phi}(\mbf{m}_K)/[\pi^n]_{\phi} F_{\phi}(\mbf{m}_K)
\to H^1(Kk_{\pi},F_{\phi}[\pi^n]_{\phi})$
to be the composite of the Kummer map
$F_{\phi}(\mbf{m}_K)/[\pi^n]_{\phi} F_{\phi}(\mbf{m}_K)
\hookrightarrow H^1(K,F_{\phi}[\pi^n]_{\phi})$
and the restriction map
$H^1(K,F_{\phi}[\pi^n]_{\phi})\to H^1(Kk_{\pi},F_{\phi}[\pi^n]_{\phi})$.
We have $\xi_n(a_i\ \mrm{mod}\ [\pi^n]_{\phi} F_{\phi}(\mbf{m}_K))=\vphi_i^{(n)}$.
Since $F_{\phi}(\mbf{m}_K)$ is $\pi$-adically complete, 
by taking the inverse limit of 
$(\xi_n)_n$, we obtain a morphism
$$
\xi\colon F_{\phi}(\mbf{m}_K)\hookrightarrow  
H^1(K, T_{\pi})\to H^1(Kk_{\pi}, T_{\pi})
$$
of $\cO_k$-modules
(here, we use \cite[Chapter II, Corollary 2.7.6]{NSW}).
By definition, we have  $\xi(a_i)=\vphi_i$ for any $i$.
By the equation $\lambda_1\varphi_1+\cdots + \lambda_r\varphi_r=0$,
we obtain 
$\xi([\lambda_1]_{\phi}(a_1)\oplus \cdots \oplus [\lambda_r]_{\phi}(a_r))=0$.
Since the kernel of the restriction map 
$H^1(K, T_{\pi})\to H^1(Kk_{\pi}, T_{\pi})$
is isomorphic to $H^1(Kk_{\pi}/K,T_{\pi})$ and this is finite 
by Lemma \ref{structure2-1},
we obtain that 
$[\lambda_1]_{\phi}(a_1)\oplus \cdots \oplus [\lambda_r]_{\phi}(a_r)$
is an torsion element of the $\cO_k$-module $F_{\phi}(\mbf{m}_K)$.
Since  $a_1,\dots ,a_r$ are linearly independent over $\cO_k$,
we obtain $\lambda_1=\cdots =\lambda_r=0$.

\noindent
(2) Put $V=T_{\pi}\otimes_{\mbb{Z}_p} \mbb{Q}_p = T_{\pi}\otimes_{\cO_k} k$
and $W=\oplus^r_{i=1} V$.
Then $W$ is a semi-simple $k[G_K]$-module and its $G_K$-action factors through 
$\mrm{Gal}(Kk_{\pi}/K)$. 
Set $M:=\mrm{Im}(\Phi) \otimes_{\mbb{Z}_p}\mbb{Q}_p$.
Since we have $\tau\Phi(\sigma)=\Phi(\tau \sigma \tau^{-1})$
for any $\tau\in G_K$ and $\sigma\in G_{Kk_{\pi}}$,
$\mrm{Im}(\Phi)$ is a
$\mbb{Z}_p[G_K]$-stable submodule of $\oplus^r_{i=1} T_{\pi}$.
(The author does not know whether 
the $\cO_k$-action on $\oplus^r_{i=1} T_{\pi}$ preserves 
$\mrm{Im}(\Phi)$ or not.)
In particular, 
$M$ is a $\mbb{Q}_p[G_K]$-stable submodule of $W$.

We claim that $M$ is a $k$-stable submodule of $W$. 
Put $G_{\pi}:=\chi_{\pi}(G_K)$. This is an open subgroup of 
$\mrm{GL}_{\cO_k}(T_{\pi})=\cO_k^{\times}$
and $\mbb{Z}_p[G_{\pi}]$ is a $\mbb{Z}_p$-subalgebra of 
$\mrm{End}_{\cO_k}(T_{\pi})=\cO_k$.
We see that $\mbb{Z}_p[G_{\pi}]$ is open in $\cO_k$
(in fact, if we take $m>0$ large enough so that 
$1+\pi^m\cO_k\subset G_{\pi}$,
then $\mbb{Z}_p[G_{\pi}]$ contains $\pi^m\cO_k$).
Thus we have  $\mbb{Q}_p[G_{\pi}]=\mrm{End}_{k}(V)=k$.
Since $\mbb{Z}_p[G_{\pi}]$-action on $\oplus^r_{i=1} T_{\pi}$ 
preserves $\mrm{Im}(\Phi)$,
the claim follows.

We show $M=W$. Assume that $M$ is strictly smaller than $W$.
Since $W$ is semi-simple as a $k[G_K]$-module,
there exists a $k[G_K]$-stable submodule $M_1$ of $W$ 
such that $W=M\oplus M_1$.
By the assumption, we know that $M_1$ is not zero. 
Take an integer $i$ such that the projection to the $i$-th component
$p_i\colon W\to V$ does not vanishes $M_1$. 
Let $\eta\colon W\to V$ be the composite 
of $0_M\oplus \mrm{id}_{M_1}\colon W=M\oplus M_1\to M\oplus M_1=W$
and $p_i$. 
By definition, $\eta$ is a morphism of $k[G_K]$-modules 
and we have $\eta|_M=0$ and $\eta|_{M_1}\not= 0$.
We denote by $\eta_j \colon V\to V$ the composite
of the injection $V\hookrightarrow W$ to the $j$-th component and 
$\eta$. We see 
$\eta(v)=\sum^r_{j=1} \eta_j(v_j)$
for any $v=(v_1,\dots , v_r)\in W$.
Since $\eta_j\in \mrm{End}_k(V)$, we may regard $\eta_j$ 
as an element of $k$. Then, we have 
$
\eta(v)=\sum^r_{j=1} \eta_j v_j
$
for any $v=(v_1,\dots , v_r)\in W$.
Since $\eta|_M=0$, we obtain
$$
\sum^r_{j=1} \eta_j(\vphi_j(\sigma)\otimes 1)=0
$$
for any $\sigma \in G_{Kk_{\pi}}$ (here, we consider 
$\vphi_j(\sigma)\otimes 1$ as an element of 
the tensor product $T_{\pi}\otimes_{\cO_k} k$).
Taking $N>0$ large enough so that $\pi^N\eta_j\in \cO_k$ for all $j$,
we have 
$$
\left(\sum^r_{j=1} \pi^N\eta_j \vphi_j(\sigma)\right)\otimes 1=0.
$$
Since the natural map $T_{\pi}\to V$ is injective,
we obtain $\sum^r_{j=1} \pi^N\eta_j \vphi_j(\sigma)=0$.
By (1), we have $\pi^N\eta_1=\cdots =\pi^N\eta_r=0$,
and thus $\eta_1=\cdots =\eta_r=0$. 
This shows $\eta=0$. 
This contradicts the fact that $M_1$ is not zero and 
$\eta|_{M_1}\not=0$.
Therefore, we obtain $M=W$.
Since both $\mrm{Im}(\Phi)$ and $\oplus^r_{i=1} T_{\pi}$ 
are $\mbb{Z}_p$-lattices in $W$, 
$\mrm{Im}(\Phi)$ is open in $\oplus^r_{i=1} T_{\pi}$.

\noindent
(3) Since we have $\mrm{Gal}(K_{\Phi}/Kk_{\pi})\simeq \mrm{Im}(\Phi)$,
the result follows from (2).
\end{proof}

\begin{proof}[Proof of  Theorem \ref{structure:Ktilde}]
The result is  an immediate consequence of Lemmas 
\ref{structure1} and \ref{structure2-2}.
\end{proof}

\begin{remark}
The author believe that, in Lemma \ref{structure2-2} (2), the image of 
the homomorphism
$\Phi \colon G_{Kk_{\pi}}\to \oplus^r_{i=1} T_{\pi}$ 
should be stable under the $\cO_k$-action of  $\oplus^r_{i=1} T_{\pi}$.
If this is true, the Galois group 
$\mrm{Gal}(\widetilde{K}/K)$ has a structure of 
$\cO_k$-modules, free of rank $[K:k]$. 
Moreover, Lemma \ref{Hlem} should hold also for any $\sigma$
(without the assumption that $\chi_{\pi}(\sigma)$ is a rational integer).
\end{remark}

\subsection{Proofs of Theorems \ref{MMT} and \ref{MT}}

We show Theorems \ref{MMT} and \ref{MT}
given in the Introduction.  

\begin{proof}[Proof of Theorem \ref{MMT}]
Assume that there exist a finite extension $L/\widetilde{K}$ 
and an abelian variety $A$ over $L$ with potential good reduction 
such that  $A(L)_{\mrm{tor}}$ is infinite.
The goal is to check that  $(k,\pi)$ satisfies (W) under this assumption. 
We remark that  (W) in Theorem \ref{MMT} 
coincides with (W)'  in Theorem \ref{MTab} for $k=E$ and $\chi=\chi_{\pi}$
(here, we recall that $\delta_{\chi_{\pi}}=\pi$; see Example \ref{exLT}).
Hence, by Theorem \ref{MTab}, 
it suffices to show  that there exists a finite extension $K'/K$ so that  
$A$ is defined over $K'$ and $A(K'k_{\pi})[p^{\infty}]$ is infinite.
At first, we take a finite extension $K_1/K$ so that 
$L\subset K_1\widetilde{K}$, $A$ is defined over $K_1$ and has good reduction over $K_1$.
By the same method as the construction of $\widetilde{K}$ from $K,k, \pi$ (and $\phi$),
we define $\widetilde{K}_1$ to be the filed corresponding to $K_1,k, \pi$ (and $\phi$).
Clearly we have $L\subset \widetilde{K}_1$ and thus 
$A(\widetilde{K}_1)_{\mrm{tor}}$ is infinite. 
Since the residue field of $\widetilde{K}_1$ is finite by Proposition \ref{finres}, 
it follows from \cite[Proposition 2.9]{Oz} that 
the prime-to-$p$ part of $A(\widetilde{K}_1)_{\mrm{tor}}$ is finite.
Thus  $A(\widetilde{K}_1)[p^{\infty}]$ is infinite.
If we denote by $V_p(A)$ the rational $p$-adic Tate module of $A$, 
then the infiniteness of  $A(\widetilde{K}_1)[p^{\infty}]$ implies that
$\widetilde{V}:=V_p(A)^{G_{\widetilde{K}_1}}$ is a non-zero $G_{K_1}$-stable submodule of $V_p(A)$.
We regard $\widetilde{V}$ as a representation of $\mrm{Gal}(\widetilde{K}_1/K_1)$.
By Proposition \ref{unipotent} (with $G:=\mrm{Gal}(\widetilde{K}_1/K_1)$ 
and $H:=\mrm{Gal}(\widetilde{K}_1/K_1k_{\pi})$), 
there exists a finite Galois extension 
$K'/K_1$ in $\widetilde{K}_1/K_1$ such that $H':=\mrm{Gal}(\widetilde{K}_1/K'k_{\pi})$ 
acts unipotently on $\widetilde{V}$.
Hence $\widetilde{V}^{H'}=V_p(A)^{G_{K'k_{\pi}}}$ is non-zero,
that is,  $A(K'k_{\pi})[p^{\infty}]$ is infinite as desired. 
\end{proof}

\begin{proof}[Proof of Theorem \ref{MT}]
Assume that neither ($\mu$) nor (W) in the Introduction holds. 
By Corollary \ref{keycor}, 
for the proof of the theorem, 
it suffices to check {\rm ($\mu_{\infty}$)} and {\rm (AV${}_{\infty}$)} for $(L=\tilde{K}/K,g=\infty)$. 
The condition {\rm (AV${}_{\infty}$)} 
is a consequence of Theorem \ref{MMT}. 
For  {\rm ($\mu_{\infty}$)},
it suffices to show that the set $\mu_{\infty}(L')$ is finite for any finite extension  $L'/L$. 
Since the residue field of $L'$ is finite by Proposition \ref{finres},
the finiteness of the set $\mu_{\ell^{\infty}}(L')$ for any prime number $\ell\not=p$ follows. 
Furthermore, we see that ${L'}$ does not 
contain $\mu_{\ell}$ 
if $\ell\ge q_{L'}$ where 
$q_{L'}$ is the order of the residue field $\mbb{F}_{L'}$ of ${L'}$.
In fact, if $L'$ contains $\mu_{\ell}$,
then $\mbb{F}_{L'}$ contains the residue field of $\mbb{Q}_p(\mu_{\ell})$.
Hence, taking $f_{\ell}>0$ the minimum integer $s$ such that $p^s\equiv 1\ \mrm{mod}\ \ell$,
we have $q_{L'}\ge p^{f_{\ell}}\ge \ell+1$. 
Finally, we show the finiteness of the set $\mu_{p^{\infty}}(L')$.
Assume that  $\mu_{p^{\infty}}(L')$ is infinite. Then $L'$ contains $k(\mu_{p^{\infty}})$.
By \cite[Lemma 2.7]{Oz} and the assumption that $(\mu)$ does not hold,  
we know that $k_{\pi}\cap k(\mu_{p^{\infty}})$ is a finite extension of $k$. 
Thus the extension $k_{\pi}(\mu_{p^{\infty}})/k_{\pi}$ is of infinite degree.
It follows from local class field theory that 
the residue field of $k_{\pi}(\mu_{p^{\infty}})$ is infinite.
Since $L'$ contains $k_{\pi}(\mu_{p^{\infty}})$, we obtain the fact that 
the residue field of $L'$ is also infinite but this is a contradiction.
Therefore, the set $\mu_{p^{\infty}}(L')$ must be finite as desired.
\end{proof}


\subsection{Proof of Theorem \ref{MTF}}
\label{pfMTF}

We show Theorem \ref{MTF}.
Assume that we have found
a finite set $\mcal{W}_{\mrm{ab}}=\mcal{W}_{\mrm{ab}}(f,g;k)$
of $q$-Weil integers with the property described in the theorem 
under the additional condition that $G$ is an abelian variety 
with potential good reduction.
Assuming this, we show that the set
$$
\mcal{W}=\mcal{W}(f,g;k):=\mcal{W}_{\mrm{ab}}\cup q\cdot \mu_{p-1}
$$
satisfies the desired property. 
Assume that we have  $\mrm{Nr}_{k/\mbb{Q}_p}(\pi)\notin \mcal{W}$.
Let $K$ be a finite extension of $k$ with $f_K\le f$. 
Take a finite extension $L/\widetilde{K}$.
Replacing $L$ by a finite extension, we may assume that $L/K$ is Galois. 
We consider 
($\mu_{\infty}$) and (AV${}_{\infty}$) for  $(L/K,g)$.
The condition ($\mu_{\infty}$) follows from the facts that 
$q^{-1}\mrm{Nr}_{k/\mbb{Q}_p}(\pi)$ is now not a root of unity and 
any finite extension of $\widetilde{K}$ has a finite residue field
(see the proof of Theorem \ref{MT}). 
Furthermore, 
the condition $\mrm{Nr}_{k/\mbb{Q}_p}(\pi)\notin \mcal{W}_{\mrm{ab}}$
assures (AV${}_{\infty}$).
Now the theorem follows from  Proposition \ref{key} (2).

In the rest of the proof, 
we show the existence of $\mcal{W}_{\mrm{ab}}=\mcal{W}_{\mrm{ab}}(f,g;k)$. 
First we consider the case where $k$ is a Galois extension of $\mbb{Q}_p$.
Let $K$ be a finite extension of $k$ with $f_K\le f$
and $A$ an abelian variety over $K$, of dimension at most $g$,
with potential good reduction. 
Suppose that $A(L)_{\mrm{tor}}$ is infinite 
for a finite extension $L/\widetilde{K}$.
Replacing $K$ by a totally ramified extension,
we may suppose that $A$ has good reduction over $K$ 
(cf. \cite[\S 2]{ST}).
The argument given in the proof of Theorem \ref{MMT} shows that 
there exists a finite extension $K'/K$  with the property that 
$V_p(A)^{G_{K'k_{\pi}}}$ is not zero. 
Let $M$ be the Galois closure of $K'k_{\pi}$ over $K$,
which is a finite extension of $Kk_{\pi}$. 
Then $(V_p(A)^{G_M})^{\vee}$ 
is a non-zero crystalline representation of $G_K$
with Hodge-Tate weights in $\{-1,0\}$.
Here, the notation ``$\vee$'' stands for the usual dual representation.
By \cite[Lemma 2.5]{Oz} (or Remark \ref{remark}),
there exist finite extensions $K'/K$ and $E/\mbb{Q}_p$ with $K',E\supset k$   
such that any Jordan-H\"older factor $W$ of 
the $E$-representation 
$((V_p(A)^{G_M})^{\vee}\otimes_{\mbb{Q}_p} E)|_{G_{K'}}$
is of the form 
\begin{equation}
\label{charLT}
W=E(\hat{\chi}^{-1}),\quad
\hat{\chi}=\prod_{\sigma \in \Gamma_k}\sigma^{-1}\circ \chi^{-r_{\sigma}}_{\pi}
\end{equation}
for some integer $r_{\sigma}\in \{-1,0\}$. 
We fix  a lift $\hat \sigma\colon \overline{\mbb{Q}}_p\to \overline{\mbb{Q}}_p$
of each $\sigma\in \Gamma_k$
and denote by $\Gamma$ the set of homomorphisms $\hat \sigma$ such that $r_{\sigma}=-1$.
We have $\hat{\chi}=\prod_{\hat{\sigma} \in \Gamma}\hat{\sigma}^{-1}\circ \chi_{\pi}$
and $|\Gamma|=\sum_{\sigma\in \Gamma_k}(-r_{\sigma})$.
Let  $\alpha$ be a root of the characteristic polynomial of $D_{\mrm{cris}}^{K'}(W)$.
It follows from  Lemma \ref{root} 
that  $\alpha$ is of the form
\begin{equation}
\label{alpha}
\alpha=a^{f_{K'/k}},\quad a= \prod_{\tau\in \Gamma_k} \tau(\pi)^{t_{\tau}}
\end{equation}
for some integers $t_{\tau}\ge 0$  such that 
$\sum_{\tau\in \Gamma_k} t_{\tau}
=\sum_{\hat{\sigma} \in \Gamma } 1 
=\sum_{\sigma\in \Gamma_k} (-r_{\sigma})$.
Since $W$ is a subquotient $E$-representation of 
$(V_p(A)^{G_M})^{\vee}\otimes_{\mbb{Q}_p} E$
and 
$(V_p(A)^{G_M})^{\vee}$ is a quotient $\mbb{Q}_p$-representation of 
$V_p(A)^{\vee}$, 
we know that $\alpha$ is a root of the characteristic polynomial of 
$D_{\mrm{cris}}^{K'}(V_p(A)^{\vee})$. 
Since $A$ has good reduction over $K$, 
$\alpha$ is a $f_{K'/K}$-th power of 
some root $\beta$ of the characteristic polynomial of 
$D_{\mrm{cris}}^{K}(V_p(A)^{\vee})$;  
\begin{equation}
\label{beta}
\alpha=\beta^{f_{K'/K}}.
\end{equation}
By the Weil conjecture, 
$\beta$ is a $q_K$-Weil integer of weight $1$.
Set $t_0:=\sum_{\tau\in \Gamma_k} t_{\tau}
=\sum_{\sigma\in \Gamma_k} (-r_{\sigma})$.
We have $0\le t_0\le [k:\mbb{Q}_p]$ since $r_{\sigma}\in \{-1,0\}$.
Furthermore, we have $t_0\not=0$.
In fact, if we assume  $t_0=0$,
then $t_{\tau}=0$ for any $\tau$
and thus $\beta^{f_{K'/K}}=1$. However, this contradicts the fact that 
$\beta$ is a $q_K$-Weil integer of non-zero weight $1$.
By \eqref{alpha} and  \eqref{beta},
we have $a^{f_K}=\zeta \beta^{f_k}$ where $\zeta$ is a root of unity.
Let $T_1=T_1(f,g)$ be the set of $p$-Weil integers $x$ of weight at most $f$
such that $[\mbb{Q}(x):\mbb{Q}]\le 2g$.  
The set $T_1$ is finite.
Let $k_1=k_1(f,g;k)$ be the extension field of $k$ 
obtained by adjoining $T_1$.
Then $k_1$ depend only on $f,g$ and $k$.
By the Weil conjecture, we have $\beta\in T_1$.
This in particular implies $\beta, \zeta\in k_1$.
Taking $\mrm{Nr}_{k_1/\mbb{Q}_p}$ to the equality
$a^{f_K}=\zeta \beta^{f_k}$,
we obtain that 
$$
\mrm{Nr}_{k/\mbb{Q}_p}(\pi)^{t_0f_K[k_1:k]}
\mrm{Nr}_{k_1/\mbb{Q}_p}(\beta)^{-f_k}
$$
is a root of unity in $\mbb{Q}_p$, that is,
a $(p-1)$-th root of unity. 
Since we have $0< t_0\le [k:\mbb{Q}_p]$, 
the existence of the desired set $\mcal{W}_{\mrm{ab}}=\mcal{W}_{\mrm{ab}}(f,g;k)$ now follows.

Next we consider the case where $k$ is not necessarily 
a Galois extension of $\mbb{Q}_p$. 
Set $f_{(k)}:=f\cdot e_{k_G/k}\cdot [k_G:k]$.
For any finite Galois extension $F/\mbb{Q}_p$, 
we already knows the existence of $\mcal{W}_{\mrm{ab}}(f_{(k)},g;F)$.
We fix the choice of $\mcal{W}_{\mrm{ab}}(f_{(k)},g;F)$
for each such $F$.
Let $\mrm{Gal}_k$ be  the set of Galois extensions $F/\mbb{Q}_p$
such that $F\supset k_G$ and $[F:k_G]\le e_{k_G/k}$.
(If $k/\mbb{Q}_p$ is Galois, then we have $f_{(k)}=f$ and $\mrm{Gal}_k=\{ k \}$.)
We define 
$$
\mcal{W}_{\mrm{ab}}=\mcal{W}_{\mrm{ab}}(f,g;k):=\{ x\in \overline{\mbb{Q}} \mid 
x^{f_{F/k}}\in \mcal{W}_{\mrm{ab}}(f_{(k)},g;F) \ \mbox{for some $F\in \mrm{Gal}_k$} \}.
$$ 
Since $\mrm{Gal}_k$ is finite, the set $\mcal{W}_{\mrm{ab}}$ is also finite.
It suffices to show that $\mcal{W}_{\mrm{ab}}$ satisfies the desired property.
Let $K$ be a finite extension of $k$ with $f_K\le f$
and $A$ an abelian variety over $K$, of dimension at most $g$,
with potential good reduction. 
Suppose that $A(L)_{\mrm{tor}}$ is infinite 
for a finite extension $L/\widetilde{K}$.
As explained above, we may assume that $A$ has good reduction over $K$, 
and there exists  a finite extension of $M/Kk_{\pi}$
such that $M$ is a Galois extension of $K$ and  $V_p(A)^{G_M}$ is not zero.
The torsion subgroup $A(M)_{\mrm{tor}}$ of $A(M)$ is now infinite. 
On the other hand, 
it follows from \cite[Lemma 2.8]{Oz} that
there exist $k'\in \mrm{Gal}_k$ and a uniformizer $\pi'$ of $k'$ 
with the properties that 
$\mrm{Nr}_{k'/k}(\pi')=\pi^{f_{k'/k}}$ and
$k_{\pi}\subset k'_{\pi'}$.
Putting $K'=Kk'$, we have 
$f_{K'}\le f_K\cdot [K':K]\le f_K\cdot [k':k]\le f_{(k)}$.
We denote by $\widetilde{K}'$ the field corresponding to $K',k'$ and $\pi'$
(cf.\ Proposition \ref{tildeK} (2)).
Putting $L'=M\widetilde{K}'$, then $L'$ is a finite extension of $\widetilde{K}'$
and we have  $A(L')_{\mrm{tor}}$ is infinite.
Therefore,
we have 
$\mrm{Nr}_{k'/\mbb{Q}_p}(\pi')\in \mcal{W}_{\mrm{ab}}(f_{(k)},g;k')$.
Since we have  $k'\in \mrm{Gal}_k$ and 
$\mrm{Nr}_{k'/\mbb{Q}_p}(\pi')=\mrm{Nr}_{k/\mbb{Q}_p}(\pi)^{f_{k'/k}}$,
we obtain $\mrm{Nr}_{k/\mbb{Q}_p}(\pi)\in\mcal{W}_{\mrm{ab}}(f,g;k)$.
This finishes the proof.

\if0

\appendix

\section{Elliptic curves}

In this appendix, we discuss finiteness results on torsion points of  elliptic curves with values in (finite extensions of)
Lubin-Tate extensions of $p$-adic fields.
In this case, we have precise criteria of  the finiteness of torsion points. 
Throughout appendix, we do not assume that $k$ is a subfield of $K$.

\begin{proposition}
Let $E$ be an elliptic curve over $K$ without potential good reduction.
Then the following are equivalent.
\begin{itemize}
\item[{\rm (1)}]  $E(L)_{\mrm{tor}}$ is infinite for some finite extension $L/Kk_{\pi}$.
\item[{\rm (2)}]  $E(L)[p^{\infty}]$ is infinite for some finite extension $L/Kk_{\pi}$.
\item[{\rm (3)}]  $q^{-1}\mrm{Nr}_{k/\mbb{Q}_p}(\pi)$ is a root of unity.
\end{itemize}
\end{proposition}

\begin{proof}
Replacing $K$ by a finite extension, we may assume that $E$ is a Tate curve
associated with some $t\in K^{\times}$.
Then $E(L)[p^{\infty}]$ is infinite if and only if $V_p(E)^{G_L}$ contains
a one-dimensional  subrepresentation of $V_p(E)$, which is isomorphic to $\mbb{Q}_p(1)$.
Combining this with  \cite[Lemma 2.7]{Oz}, the equivalence of (2) and (3) follows.  
To show the equivalence of (1) and (2), it suffice to show that 
the prime to $p$-part of $E(L)_{\mrm{tor}}$ is infinite for any finite extension $L/Kk_{\pi}$.
We may assume that $L$ is Galois over $K$.
For any prime number $\ell\not=p$, 
the finiteness of  the $\ell$-part of $E(L)_{\mrm{tor}}$ follows from the fact that 
$L\cap K(E[\ell^{\infty}])$ is a finite extension of $K$.
The proof completes if we show $E(L)[\ell]=0$ for almost all prime numbers
$\ell$.
Take any prime $\ell$ such that $t \notin (K^{\times})^{\ell}$. 
By \cite[A 1.2, Proposition (a)]{Se}, if we assume $E(L)[\ell]$ is not zero, then 
$L$ contains $\mu_{\ell}$ but this does not hold for $\ell$ large enough 
since $L$ has a finite residue field.
\end{proof}

For the potential good reduction case, we have the following. 
After the proof,
we give a remark which shows that the situation in the proposition is ``almost optimal''.

\begin{proposition}
\label{ell}
Let $E$ be an elliptic curve over $K$ with potential good reduction.
Assume that $E(L)_{\mrm{tor}}$ is infinite for some finite extension $L/Kk_{\pi}$.
Then either one of the following holds.
\begin{itemize}
\item[{\rm (1)}] $E$ has potential good ordinary reduction and 
$\mrm{Nr}_{k/\mbb{Q}_p}(\pi)$ is a $q$-Weil integer of weight $1$. 
\item[{\rm (2)}]   $E$ has potential good supersingular reduction and $q^{-1}\mrm{Nr}_{k/\mbb{Q}_p}(\pi)$
is a root of unity.
\end{itemize}
Moreover, 
\begin{itemize}
\item[--] if {\rm (2)} is satisfied and $k$ is a Galois extension of $\mbb{Q}_p$,
then $d=[k:\mbb{Q}_p]$ is even, and 
\item[--] if {\rm (2)} is satisfied and  $E$ has good reduction over $K$,
then the $p$-divisible group associated with $E$ has formal complex multiplication.
\end{itemize}

\end{proposition}
\begin{proof}
By \cite[Lemma 2.8]{Oz}, 
it is not difficult to reduce a proof to the case where $k$ is a Galois extension of $\mbb{Q}_p$.
Replacing $K$ and $L$ by finite extensions,  
we may assume that $E$ has good reduction over $K$
and $L$ is a Galois extension of $K$.
Then the argument given in the beginning of Section \ref{pfMTF} can be applied to $A=E$;
we use the same notation. 
The dimension of  $V_p(E)^{G_{L}}$ is one or two.

First we consider the case where $V_p(E)^{G_{L}}$ is of dimension one. 
Then we have  an exact sequence 
$0\to D_1
\to D^{K'}_{\mrm{cris}}(V_p(E)^{\vee})
\to D_2\to 0$
of filtered $\vphi$-modules over $K'$, 
and $D_1$ and $D_2$ are one dimensional. 
Take any non-zero element $\mbf{e}\in D_1$ and 
$\mbf{f}\in D^{K'}_{\mrm{cris}}(V_p(E)^{\vee})$ such that 
the image of $\mbf{f}$ in $D_2$ does not vanish. 
Note that  $\{\mbf{e}, \mbf{f}\}$ is an basis of 
$D^{K'}_{\mrm{cris}}(V_p(E)^{\vee})$.
Let  $X\in M_2(K'_0)$  so that $\vphi(\mbf{e}, \mbf{f})=(\mbf{e}, \mbf{f})X$.
Then $X$ is an upper triangular matrix and we have 
$\vphi^{f_{K'/\mbb{Q}_p}}(\mbf{e}, \mbf{f})=(\mbf{e}, \mbf{f})X\vphi(X)\cdots \vphi^{f_{K'/\mbb{Q}_p}-1}(X)$.
Since 
$\alpha$ is a root of the characteristic polynomial of 
$D^{K'}_{\mrm{cris}}(V_p(E)^{\vee})$, 
we know that $\alpha$ is an eigenvalue of $X\vphi(X)\cdots \vphi^{f_{K'/\mbb{Q}_p}-1}(X)$.
Thus $\alpha$ is of the form $\lambda \vphi(\lambda)\cdots \vphi^{f_{K'/\mbb{Q}_p}-1}(\lambda)$.
In particular, 
we have $v_p(\alpha)\in f_{K'/\mbb{Q}_p}\mbb{Z}$.
Here, $v_p$ is the $p$-adic valuation normalized by $v_p(p)=1$.
On the other hand, we recall that 
$\alpha=(\prod_{\tau\in \Gamma_k} \tau(\pi)^{m_{\tau}})^{f_{K'/k}}$.
Hence we have $v_p(\alpha)=f_{K'/\mbb{Q}_p}m_0/d$ where
$m_0:=\sum_{\tau \in \Gamma_k} m_{\tau}$.  
Thus we have $m_0\in d\mbb{Z}$.
Since $0<m_0\le d$, we obtain $m_0=d$ and hence $v_p(\alpha)=f_{K'/\mbb{Q}_p}$.
Therefore, it follows from \eqref{beta} that the characteristic polynomial 
of the action of the geometric Frobenius 
on the reduction of $E$ has a unit root (note that  the roots of this polynomial are just 
$\alpha$ and $q_{K'}\alpha^{-1}$).
This implies that $E$ has good ordinary reduction.

Next we consider the case where $V_p(E)^{G_L}$ is of dimension two. 
Then we have 
$L\supset K(E[p^{\infty}])\supset \mbb{Q}_p(\mu_{p^{\infty}})$.
In particular, $q^{-1}\mrm{Nr}_{k/\mbb{Q}_p}(\pi)$ is a root of unity (cf.\ \cite[Lemma 2.7]{Oz}).  
Since we know that $\mrm{Nr}_{k/\mbb{Q}_p}(\pi)$ is a 
$q$-Weil integer of weight $d/m_0$ by \eqref{alpha} and \eqref{beta}, 
we obtain $d=2m_0$. Hence $d$ is even.
Furthermore, the residue field of $K(E[p^{\infty}])$ is finite 
since we have $L\supset K(E[p^{\infty}])$, and thus
$E$ has good supersingular reduction  (cf.\ \cite[A.2]{Se}).
In this case,
$V_p(E)$ is absolutely irreducible if and only if the $p$-divisible group 
associated with $E$ does not have formal complex multiplication (cf.\ {\it loc. cit.}).
On the other hand, it follows from the assumption  $V_p(E)^{G_L}=V_p(E)$ 
that the $G_K$-action on $V_p(E)$ factors through an abelian group. 
Thus $V_p(E)$ is not absolutely irreducible by Schur's lemma. 
\end{proof}

\begin{remark}
Under the situation (1) or (2) in Proposition \ref{ell},
we have examples such that the torsion subgroup $E(Kk_{\pi})_{\mrm{tor}}$ is infinite. 

\noindent
(A) Suppose $K=k=\mbb{Q}_p$. Let $E$ be an elliptic curve over $\mbb{Q}_p$ 
with  good ordinary reduction. Then 
we can find a uniformizer $\pi$ of $\mbb{Q}_p$ with the property that 
$\mrm{Nr}_{k/\mbb{Q}_p}(\pi)=\pi$ is a $p$-Weil integer of weight $1$
and $E(k_{\pi})_{\mrm{tor}}$ is infinite, as follows. 

Since $E$ has good ordinary reduction, putting $V=V_p(E)^{\vee}$, 
we have an exact sequence 
$0\to X\to V\to Y\to 0$
of crystalline representations of $G_{\mbb{Q}_p}$
such that $X$ is unramified and $Y$ has Hodge-Tate weight $-1$.
If we write $\alpha$  the root of characteristic polynomial of
$D^{\mbb{Q}_p}_{\mrm{cris}}(Y)$, 
then $\alpha$ is a uniformizer of $\mbb{Q}_p$ and is a $p$-Weil integer of weight $1$.
If we set $\pi:=\alpha$, we see that the $G_{\mbb{Q}_p}$-action on $Y$ is
given by 
 $\chi^{-1}_{\pi}$.
Thus we have  $V_p(E)^{G_{k_{\pi}}}=(V^{\vee})^{G_{k_{\pi}}}\supset (Y^{\vee})^{G_{k_{\pi}}}\not=0$.
This shows that $E(k_{\pi})[p^{\infty}]$ is infinite.

\noindent
(B) Let $E$ be an elliptic curve over $\mbb{Q}$ with complex multiplication by
the integer ring of a quadratic field $F$.
Let $p$ be a prime number which is unramified at $F$ and 
$E$ has supersingular reduction at a finite place $\mfrak{p}$ of $F$ above $p$.
Let $K=k$ be the completion of $F$ at $\mfrak{p}$.
Then $K=k$ is the quadratic unramified extension of $\mbb{Q}_p$.
Set $\pi:=-p$.
Let $\hat{E}$ denote the formal group giving the kernel of reduction modulo $p$ on $E$.
The theory of complex multiplication shows that $\hat{E}$ 
is a Lubin-Tate formal group of height two over the integer ring of $K$ 
for the uniformizing parameter $\pi$.
Thus the action of $G_K$ on $V_p(E)$,
regarded as a one dimensional $k$-representation,
is given by $\chi_{\pi}$.
In particular, $E(k_{\pi})[p^{\infty}]$ is infinite.
\end{remark}
\fi


\begin{thebibliography}{1000}


\bibitem[BGK]{BGK}
G.\ Banaszak, W.\ Gajda and P.\  Krason,
\emph{Detecting linear dependence by reduction maps},
J.\ Number theory {\bf 115} (2005), 322--342.


\bibitem[BL]{BL}
S.\ Bosch and W.\ L\"utkebohmert,
\emph{Degenerating abelian varieties},
Topology {\bf 30} (1991), 653--698.


\bibitem[Br]{Br}
M.\ Brion,
\emph{Commutative algebraic groups up to isogeny},
Docu. Math.  {\bf 22} (2017), 679--726.


\bibitem[BX]{BX}
S.\ Bosch and X.\ Xarles,
\emph{Component groups of N\'eron models via rigid uniformization},
Math. Ann. {\bf 306} (1996), 459--486.


\bibitem[CL]{CL}
P.L.\ Clark and A.\ Lacy,
\emph{There are genus one curves of every index over
every infinite, finitely generated field},
J. reine angew. Math., vol. {\bf 749} (2019), 
65--86.


\bibitem[Co]{Co}
B.\ Conrad,
\emph{Lifting global representations with local properties}, 
preprint, 2011, available at 
http://math.stanford.edu/\textasciitilde conrad/papers/locchar.pdf



\bibitem[CX]{CX}
P.L.\ Clark and X.\ Xarles, 
\emph{Local bounds for torsion points on abelian varieties},
Canad. J. Math. vol. {\bf 60} (2008), 
532--555.


\bibitem[GW]{GW}
H.\ Gl\"{o}ckner and G.A.\ Willis,
\emph{Classification of the simple factors appearing in composition
 series of totally disconnected contraction groups},
J. Reine Angew. Math. {\bf 643} (2010),
141--169.



\bibitem[Im]{Im}
H.\ Imai, 
\emph{A remark on the rational points of abelian varieties with values in 
cyclotomic $\mbb{Z}_p$-extensions},
Proc.\ Japan Acad.\ {\bf 51} (1975), 12--16. 

\bibitem[Iw]{Iw}
K.\ Iwasawa, 
\emph{Local class field theory},
Oxford Univ. Press, 1986. 

\bibitem[KT]{KT}
Y.\ Kubo and Y.\ Taguchi,
\emph{A generalization of a theorem of Imai and its applications to Iwasawa theory},
Math.\ Z.\ {\bf 275} (2013), 1181--1195.

\bibitem[La]{La}
S.\ Lang,
\emph{Cyclotomic fields I and II}, 
Combined second edition,
With an appendix by Karl Rubin,
Graduate Texts in Mathematics, 121,
Springer-Verlag, New York, 1990.


\bibitem[Mat]{Mat}
A.\ Mattuck,
\emph{Abelian varieties over $p$-adic ground fields},
Ann. of Math. (2) {\bf 62} (1955), 92--119.



\bibitem[Maz]{Maz}
B.\ Mazur,
\emph{Rational points of abelian varieties in towers of number fields},
Invent. Math. {\bf 18} (1972),
183--266.



\bibitem[Ne]{Ne}
J.\ Neukirch,
\emph{Algebraic Number Theory},
Translated from the 1992 German original and with a note by Norbert Schappacher. With a foreword by G. Harder. Grundlehren der Mathematischen Wissenschaften [Fundamental Principles of Mathematical Sciences], 322. Springer-Verlag, Berlin, 1999.


\bibitem[NSW]{NSW}
J.\ Neukirch, A.\ Schmidt and K. Wingberg,
\emph{Cohomology of number fields},
second ed., 
Grundlehren der Mathematischen Wissenschaften 
[Fundamental Principles of Mathematical Sciences],
Springer-Verlag, Berlin, 2008.



\bibitem[Oz]{Oz}
Y.\ Ozeki,
\emph{Torsion of abelian varieties and Lubin-Tate extensions},
J. Number Theory, No. 207 (2020), 282--293.


\bibitem[Ra]{Ra}
M.\ Raynaud,
\emph{Vari\'et\'es ab\'eliennes et g\'eom\'etrie rigide},
Actes du {C}ongr\`es {I}nternational des {M}ath\'ematiciens
(Nice, 1970), Tome 1,
473--477.

\bibitem[Ri1]{Ri1}
K.A.\ Ribet,
\emph{Dividing rational points on abelian varieties of CM-type},
Compositio Math. {\bf 33} (1976), 
69--74.


\bibitem[Ri2]{Ri2}
K.A.\ Ribet,
\emph{Kummer theory on extensions of abelain varieties by tori},
Duke Math.\ J.\  {\bf 46} (1979), 
745--761.


\bibitem[Se]{Se}
J.-P.\ Serre,
\emph{Abelian $l$-adic representations and elliptic curves},
second ed., 
Advanced Book Classics, 
Addison-Wesley Publishing Company Advanced Book Program,
Redwood City, CA, 1989,
With the collaboration of Willem Kuyk and John Labute.

\bibitem[Si1]{Si1}
A.\ Silverberg,
\emph{Points of finite order on abelian varieties},
Contemp. Math. {\bf 133} (1992), 
175--193.


\bibitem[Si2]{Si2}
A.\ Silverberg,
\emph{Open questions in arithmetic algebraic geometry},
In: Arithmetic Algebraic Geometry.
IAS/Park CityMath. Ser. 9, American Mathematical Society, Providence, RI, 2001, 
83--142.


\bibitem[ST]{ST}
J.-P.\ Serre and J.\ Tate,
\emph{Good reduction of abelian varieties},
Ann. of Math. (2) {\bf 8} (1986),
492--517.

\bibitem[Wa]{Wa}
L.-C.\ Washington, 
\emph{Galois cohomology}, 
Modular forms and Fermat's last theorem (Boston, MA, 1995),
101--120, Springer, New York, 1997.

\bibitem[Yo]{Yo}
T.\ Yoshida,
\emph{Local class field theory via Lubin-Tate theory},
Ann. Fac. Sci. Toulouse Math. (6) {\bf 17} (2008),
411--438.




\end{thebibliography}
\end{document}